\renewcommand\showkeyslabelformat[1]{}
\newtheorem{theorem}{Theorem}[section]
\newtheorem{lemma}[theorem]{Lemma}
\newtheorem*{lemma*}{Lemma}
\newtheorem{corollary}[theorem]{Corollary}
\newtheorem{remark}[theorem]{Remark}
\numberwithin{equation}{section}
\newcommand{\labitem}[2]{%
\def\@itemlabel{\textbf{#1}}
\item
\def\@currentlabel{#1}\label{#2}}
\newcommand{\norm}[1]{\left\|{#1}\right\|}
\newcommand{\abs}[1]{\left|{#1}\right|}
\newcommand{\rkla}[1]{{\left(#1\right)}}
\newcommand{\trkla}[1]{{(#1)}}
\newcommand{\gkla}[1]{{\left\{#1\right\}}}
\newcommand{\tgkla}[1]{{\{#1\}}}
\newcommand{\skla}[1]{{\left\langle#1\right\rangle}}
\newcommand{\ekla}[1]{{\left[#1\right]}}
\newcommand{\tekla}[1]{{[#1]}}
\newcommand{\tabs}[1]{|{#1}|}
\newcommand{\bs}[1]{\boldsymbol{#1}}
\newcommand{\mb}[1]{\mathbf{#1}}
\newcommand{\iOmegaT}{{\int_{\Omega_T}}}
\newcommand{\iOmega}{\int_{\Omega}}
\newcommand{\iGamma}{\int_{\Gamma}}
\newcommand{\iGammaT}{\int_{\Gamma_T}}
\newcommand{\para}[1]{\partial _{#1}}
\newcommand{\dtau}{\para{\tau}^-}
\renewcommand{\div}{\operatorname{div}}
\newcommand{\diam}{\operatorname{diam}}
\newcommand{\eps}{\varepsilon}
\newcommand{\mol}[1][\empty]{
  \ifthenelse{\equal{#1}{\empty}}
  {\mathcal{J}_{\eps}}
  {\mathcal{J}_{\eps}\gkla{#1}}
}
\newcommand{\molb}[1][\empty]{
  \ifthenelse{\equal{#1}{\empty}}
  {\bs{\mathcal{J}}_{\eps}}
  {\bs{\mathcal{J}}_{\eps}\gkla{#1}}
}
\newcommand{\molbb}[1][\empty]{
  \ifthenelse{\equal{#1}{\empty}}
  {\bs{\mathfrak{J}}_{\eps}}
  {\bs{\mathfrak{J}}_{\eps}\gkla{#1}}
}
\newcommand{\molh}[1][\empty]{
  \ifthenelse{\equal{#1}{\empty}}
  {\mathcal{J}_{h}}
  {\mathcal{J}_{h}\gkla{#1}}
}
\newcommand{\molhb}[1][\empty]{
  \ifthenelse{\equal{#1}{\empty}}
  {\bs{\mathcal{J}}_{h}}
  {\bs{\mathcal{J}}_{h}\gkla{#1}}
}
\newcommand{\molhbb}[1][\empty]{
  \ifthenelse{\equal{#1}{\empty}}
  {\bs{\mathfrak{J}}_{h}}
  {\bs{\mathfrak{J}}_{h}\gkla{#1}}
}
\newcommand{\ids}{I}
\newcommand{\nn}{^{n}}
\newcommand{\no}{^{n-1}}
\newcommand{\n}{n}
\newcommand{\tl}{^{\tau}}
\newcommand{\tp}{^{\tau,+}}
\newcommand{\tm}{^{\tau,-}}
\newcommand{\tpm}{^{\tau,(\pm)}}
\newcommand{\h}{_{h}}
\newcommand{\weak}{\rightharpoonup}
\newcommand{\weakstar}{\stackrel{*}{\rightharpoonup}}
\newcommand{\Uhs}{U\h^\Omega}
\newcommand{\UhG}{U\h^\Gamma}
\newcommand{\Th}{\mathcal{T}_h}
\newcommand{\R}{\mathds{R}}
\newcommand{\projUhsop}{\mathcal{P}_{\Uhs}}
\newcommand{\Ihop}{\mathcal{I}_h}
\newcommand{\Ih}[1]{\Ihop\gkla{#1}}
\newcommand{\IhGop}{\mathcal{I}_h^\Gamma}
\newcommand{\IhG}[1]{\IhGop\gkla{#1}}
\newcommand{\nnorm}[1]{\left|\!\left|\!\left|#1\right|\!\right|\!\right|}
\newcommand{\restr}[2]{\ensuremath{
  \left.\kern-\nulldelimiterspace 
  #1 
  \vphantom{\big|} 
  \right|_{#2} 
  }}
\newcommand{\extend}[2]{\ensuremath{
  \left.\kern-\nulldelimiterspace 
  #1 
  \vphantom{\big|} 
  \right|^{#2} 
  }}
\newcommand{\mobO}{m}
\newcommand{\mobG}{m_{\Gamma}}
\newcommand{\muO}{\mu}
\newcommand{\muOh}{\mu_{h}}
\newcommand{\muG}{\mu_{\Gamma}}
\newcommand{\muGh}{\mu_{\Gamma,h}}
\newcommand{\LapBel}{\Delta_\Gamma}
\newcommand{\nablaG}{\nabla_{\Gamma}}
\newcommand{\MOmega}{\mathbf{M}_\Omega}
\newcommand{\MGamma}{\mathbf{M}_\Gamma}
\newcommand{\LOmega}{\mathbf{L}_\Omega}
\newcommand{\LGamma}{\mathbf{L}_\Gamma}
\newcommand{\muvec}{P}
\newcommand{\muGvec}{P_\Gamma}
\newcommand{\GammaSymb}{\Gamma}
\newcommand{\OmegaSymb}{\Omega}
\newcommand{\InnerSymb}{\stackrel{\circ}{\Omega}}
\newcommand{\ones}{\bs{1}}
\newcommand{\onesG}{\bs{1}_\Gamma}
\newcommand{\deltaG}{\delta_\Gamma}
\newcommand{\antikappa}{\beta}
\newcommand{\traceop}{\gamma}
\newcommand{\trace}[1]{\traceop\trkla{#1}}
\newcommand{\Xkappa}{X_\kappa}
\newcommand{\RHSO}{\operatorname{R}_{\stackrel{\circ}{\Omega}}}
\newcommand{\RHSG}{\operatorname{R}_{\Gamma}}
\begin{document}
\title[]{An efficient and convergent finite element scheme for Cahn--Hilliard equations with dynamic boundary conditions\footnote{This work was funded by the NSF through grant number NSF-DMS 1759536.}}
\date{\today}
\author[S.~Metzger]{Stefan Metzger}
\address{Department of Applied Mathematics, Illinois Institute of Technology, Chicago IL, 60616, USA}
\email{smetzger2@iit.edu}

\begin{abstract}
The Cahn--Hilliard equation is a widely used model that describes amongst others phase separation processes of binary mixtures or two-phase flows.
In the recent years, different types of boundary conditions for the Cahn--Hilliard equation were proposed and analyzed. 
In this publication, we are concerned with the numerical treatment of a recent model which introduces an additional Cahn--Hilliard type equation on the boundary as closure for the Cahn--Hilliard equation in the domain [C. Liu, H. Wu, Arch. Ration. Mech. An., 2019].
By identifying a mapping between the phase-field parameter and the chemical potential inside of the domain, we are able to postulate an efficient, unconditionally energy stable finite element scheme.
Furthermore, we establish the convergence of discrete solutions towards suitable weak solutions of the original model.
This serves also as an additional pathway to establish existence of weak solutions.
Furthermore, we present simulations underlining the practicality of the proposed scheme and investigate its experimental order of convergence.
\end{abstract}

\keywords{Cahn--Hilliard, dynamic boundary conditions, finite elements, convergence}
\subjclass[2010]{35Q35, 35G31, 65M60, 65M12}
%
%
\selectlanguage{english}

\maketitle

\section{Introduction}
Different approaches to model the hydrodynamics of fluid mixtures have been widely used in literature.
In addition to the conventional sharp interface models which consist of separate hydrodynamic systems for each component of the mixture, there are diffuse interface models.
In these models, the hyper-surface description of the fluid-fluid interface is replaced by a small transition region, where mixing of the macroscopically immiscible fluids is allowed.
This leads to a smooth transition between the pure phases.
In this manuscript, we analyze a numerical scheme for a diffuse interface model featuring dynamic boundary conditions, which was derived recently by C. Liu and H. Wu \cite{Liu2019}.
In an open domain $\Omega$ with boundary $\Gamma=\partial\Omega$ and outer normal vector $\bs{n}$, this model reads
\begin{subequations}\label{eq:model}
\begin{align}
\para{t}\phi&=\mobO\Delta\muO&&\text{in~}\Omega\times\trkla{0,T}\,,\label{eq:model:Omega:phi}\\
\muO&=-\delta\sigma\Delta\phi+\delta^{-1}\sigma F^\prime\trkla{\phi}&&\text{in~}\Omega\times\trkla{0,T}\,,\label{eq:model:Omega:mu}\\
\nabla\muO\cdot\bs{n}&=0&&\text{on~}\Gamma\times\trkla{0,T}\,,\label{eq:model:bc:mu}\\
\para{t}\phi&=\mobG\LapBel\muG&&\text{on~}\Gamma\times\trkla{0,T}\,,\label{eq:model:Gamma:phi}\\
\muG&=-\deltaG\kappa\LapBel\phi + \deltaG^{-1}G^\prime\trkla{\phi} +\delta\sigma\nabla\phi\cdot\bs{n}&&\text{on~}\Gamma\times\trkla{0,T}\,\label{eq:model:Gamma:mu}
\end{align}
\end{subequations}
with suitable initial conditions for the phase-field parameter $\phi$ in $\overline{\Omega}$.
Here, $\Delta_\Gamma$ denotes the Laplace--Beltrami operator, the positive parameters $\mobO$ and $\mobG$ are the mobility constants in the domain and on the boundary, the constant $\sigma>0$ is related to the surface tension, $\kappa\geq0$ describes the influence of surface diffusion, and $\delta$ and $\deltaG$ prescribe the width of the transition area in the domain and on the boundary.
The potentials $F$ and $G$ which govern the chemical potentials $\mu$ and $\muG$ will be discussed below in more detail.
In \eqref{eq:model}, it is assumed that $\phi$ is defined on $\overline{\Omega}$ and that its evolution is governed by a chemical potential defined on $\Omega$ and an additional one defined on $\Gamma=\partial\Omega$.
In contrast to other approaches (cf. \cite{Goldstein2011}), $\muO$ and $\muG$ are distinct quantities which are only coupled via the normal derivative of $\phi$.

In the recent years, various different boundary conditions for Cahn--Hilliard equations have been discussed.
The easiest form of a diffuse interface model for two-phase flow reads
\begin{subequations}\label{eq:standard:CH}
\begin{align}
\para{t}\phi&=\mobO\Delta \mu&&\text{in~}\Omega\times\trkla{0,T}\,,\label{eq:CH:phi}\\
\mu&=-\delta\sigma\Delta\phi +\delta^{-1}\sigma F^\prime\trkla{\phi}&&\text{in~}\Omega\times\trkla{0,T}\,,\label{eq:CH:mu}\\
\nabla\mu\cdot\bs{n}&=0&&\text{on~}\Gamma\times\trkla{0,T}\,,\label{eq:CH:bc:mu}\\
\nabla\phi\cdot\bs{n}&=0&&\text{on~}\Gamma\times\trkla{0,T}\,,\label{eq:CH:bc:phi}
\end{align}
\end{subequations}
in combination with initial conditions for $\phi$.
The chemical potential $\mu$ is given as the first variation of the free energy
\begin{align}
\mathcal{E}_{\Omega}\trkla{\phi}:= \sigma\delta\iOmega\tfrac12\abs{\nabla\phi}^2+\sigma\delta^{-1}\iOmega F\trkla{\phi}\,,
\end{align}
where $F$ is a double-well potential with minima in $\phi=\pm1$ representing the pure fluid phases.
Typical choices for $F$ are the logarithmic double-well potential \begin{align}
W_{\operatorname{log}}\trkla{\phi}:=\tfrac\theta2\trkla{1+\phi}\log\trkla{1+\phi} +\tfrac\theta2\trkla{1-\phi}\log\trkla{1-\phi} -\tfrac{\theta_c}2\phi^2
\end{align}
with $0<\theta<\theta_c$, the double obstacle potential
\begin{align}
W_{\operatorname{obst}}\trkla{\phi}:=\left\{\begin{matrix} \theta\trkla{1-\phi^2}&\phi\in\tekla{-1,+1}\\\infty&\text{else}\end{matrix}\right.&&\text{with~}\theta>0\,,
\end{align}
and the polynomial double-well potential $W_{\operatorname{pol}}\trkla{\phi}:=\tfrac14\trkla{\phi^2-1}^2$. 
Cahn--Hilliard equations with polynomial double-well potentials are investigated, e.g.~in \cite{Elliott86,Zheng1986,Pego89,BatesFife93,Grinfeld1995,RybkaHoffmann99}.
For Cahn--Hilliard equations with the singular potentials $W_{\operatorname{log}}$ and $W_{\operatorname{obst}}$, we refer the reader to \cite{Blowey1991,Elliott1996,AbelsWilke2007,Cherfils2011}.

The boundary condition \eqref{eq:CH:bc:mu} states that there is no flux across $\Gamma$, i.e. $\iOmega\phi$ is conserved.
The second boundary condition \eqref{eq:CH:bc:phi} indicates that the fluid-fluid interface, i.e. the zero level set of $\phi$, intersects the boundary $\Gamma$ at a static contact angle of $\tfrac\pi2$.
This can be interpreted as neglecting the interactions between the fluids and the walls of the surrounding container.
Although \eqref{eq:standard:CH} satisfies the energy balance equation
\begin{align}\label{eq:E1}
\restr{\mathcal{E}_{\Omega}\trkla{\phi}}{T}+\int_0^T\!\!\!\!\iOmega\mobO\abs{\nabla\mu}^2 =\restr{\mathcal{E}_{\Omega}\trkla{\phi}}{0}\,,
\end{align}
the boundary condition \eqref{eq:CH:bc:phi} imposing a static contact angle is considered a major flaw and there are several attempts to improve this boundary condition.
For an improved description of the occurring boundary effects, the surface energy functional 
\begin{align}
\mathcal{E}_{\Gamma}\trkla{\phi}:=\kappa\deltaG\iGamma\tfrac12\abs{\nablaG\phi}^2 +\deltaG^{-1}\iGamma G\trkla{\phi}\,
\end{align}
was introduced with $\kappa\geq0$ and $\deltaG>0$.
Here, $\nablaG$ denotes the surface gradient operator on $\Gamma$ and $G$ denotes a suitable boundary potential which might be chosen similar to $F$.
Dynamic boundary conditions were picked in a way that the total energy $\tekla{\mathcal{E}_{\Omega}+\mathcal{E}_{\Gamma}}$ is decreasing in time.
An example for such boundary conditions are Allen--Cahn-type boundary conditions (cf. \cite{Fischer1997, Fischer1998, RackeZheng2003, Wu2004, ChillFasangovaPruess2006, Gilardi2009, Miranville2010, Cherfils2011, Liero2013, Colli2014, Colli2015, Mininni2017}), where \eqref{eq:CH:bc:phi} is replaced by
\begin{subequations}\label{eq:bc:AC}
\begin{align}
\para{t}\phi&=-\mobG\muG&&\text{on~}\Gamma\times\trkla{0,T}\,,\\
\muG&=-\kappa\deltaG\Delta_\Gamma\phi+\deltaG^{-1}G^\prime\trkla{\phi}+\delta\sigma\nabla\phi\cdot\bs{n}&&\text{on~}\Gamma\times\trkla{0,T}\,.
\end{align}
\end{subequations}
In the case $\kappa=0$ and $\deltaG=1$, this boundary conditions reduces to
\begin{align}\label{eq:dynamic_angle}
\sigma\delta\nabla\phi\cdot\bs{n}=-\tfrac1{\mobG}\para{t}\phi-G^\prime\trkla{\phi}\,,
\end{align}
where the potential $G$ interpolates between the liquid-solid interfacial energies of the two fluid phases and prescribes the stationary contact angle via Young's formula. 
This boundary condition was used in \cite{Qian2006} to describe dynamic contact angles (see also \cite{Thompson1989}).
Models combining the boundary conditions \eqref{eq:CH:bc:mu} and \eqref{eq:bc:AC} satisfy an energy equality of the form
\begin{align}
\restr{\mathcal{E}_{\Omega}\trkla{\phi}}{T}+\restr{\mathcal{E}_{\Gamma}\trkla{\phi}}{T}+\int_0^T\ekla{\iOmega\mobO\abs{\nabla\mu}^2 + \int_{\partial\Omega}\tfrac1{\mobG}\abs{\para{t}\phi}^2}=\restr{\mathcal{E}_{\Omega}\trkla{\phi}}{0}+\restr{\mathcal{E}_{\Gamma}\trkla{\phi}}{0}\,
\end{align}
and conserve the mean value of $\phi$ in $\Omega$, i.e. we have $\restr{\iOmega\phi}{T}=\restr{\iOmega\phi}{0}$.

In \cite{Gal2006} a coupled boundary condition replacing \eqref{eq:CH:bc:mu} and \eqref{eq:CH:bc:phi} was proposed.
Assuming that $\muG$ is the trace of $\muO$, boundary conditions of the form
\begin{subequations}\label{eq:Gal}
\begin{align}
\para{t}\phi&=-\mobG\muO-\mobO\nabla\muO\cdot\bs{n}&&\text{on~}\Gamma\times\trkla{0,T}\,,\\
\muO&=-\kappa\deltaG\Delta_\Gamma\phi+\deltaG^{-1}G^\prime\trkla{\phi}+\sigma\delta\nabla\phi\cdot\bs{n}&&\text{on~}\Gamma\times\trkla{0,T}
\end{align}
\end{subequations}
were introduced.
A solution to \eqref{eq:CH:phi}, \eqref{eq:CH:mu}, and \eqref{eq:Gal} still minimizes $\mathcal{E}_\Omega+\mathcal{E}_\Gamma$ in the sense that
\begin{align}
\restr{\mathcal{E}_{\Omega}\trkla{\phi}}{T}+\restr{\mathcal{E}_{\Gamma}\trkla{\phi}}{T}+\int_0^T\ekla{\iOmega\mobO\abs{\nabla\mu}^2 + \int_{\partial\Omega}\mobG\abs{\muO}^2}=\restr{\mathcal{E}_{\Omega}\trkla{\phi}}{0}+\restr{\mathcal{E}_{\Gamma}\trkla{\phi}}{0}\,
\end{align}
holds true.
However, the boundary conditions \eqref{eq:Gal} do not allow for conservation of $\phi$.

A third approach was proposed by Goldstein et al.~in \cite{Goldstein2011}.
In this publication, it was also assumed that $\muG$ is the trace of $\muO$.
However, \eqref{eq:Gal} was replaced by a Cahn--Hilliard-type boundary equations of the form
\begin{subequations}\label{eq:Goldstein}
\begin{align}
\para{t}\phi&=\mobG\Delta_\Gamma\muO-\mobO\nabla\muO\cdot\bs{n}&&\text{on~}\Gamma\times\trkla{0,T}\,,\\
\muO&=-\kappa\deltaG\Delta_\Gamma\phi+\deltaG^{-1}G^\prime\trkla{\phi}+\sigma\delta\nabla\phi\cdot\bs{n}&&\text{on~}\Gamma\times\trkla{0,T}\,.
\end{align}
\end{subequations}
Solutions to \eqref{eq:CH:phi}, \eqref{eq:CH:mu}, and \eqref{eq:Goldstein} satisfy the energy equality
\begin{align}\label{eq:energy:CH}
\restr{\mathcal{E}_{\Omega}\trkla{\phi}}{T}+\restr{\mathcal{E}_{\Gamma}\trkla{\phi}}{T}+\int_0^T\ekla{\iOmega\mobO\abs{\nabla\mu}^2 + \int_{\partial\Omega}\mobG\abs{\nablaG\muO}^2}=\restr{\mathcal{E}_{\Omega}\trkla{\phi}}{0}+\restr{\mathcal{E}_{\Gamma}\trkla{\phi}}{0}\,.
\end{align}
Furthermore, the total mass of $\phi$ is conserved, i.e.~$\para{t}\tekla{\iOmega\phi+\iGamma\phi}=0$.
A similar model was also discussed in \cite{Motoda2018}.

A fourth approach was proposed by C. Liu and H. Wu.
In \cite{Liu2019}, they derived model \eqref{eq:model} via a variational approach using different flow maps for $\Omega$ and $\Gamma$.
This model also satisfies \eqref{eq:energy:CH}.
However, the underlying assumptions on the occurring boundary effects are different.
While the boundary conditions proposed in \cite{Goldstein2011} allow for mass transfer between $\Omega$ and $\Gamma$ and enforce an instant equilibration of the chemical potentials, i.e. $\muG$ has to be the trace of $\muO$, the model derived in \cite{Liu2019} allows for differences in the chemical potentials, but prohibits mass transfer, i.e. $\iOmega\phi$ and $\iGamma\phi$ are conserved individually.
For boundary conditions interpolating between \eqref{eq:Goldstein} and \eqref{eq:model:bc:mu}-\eqref{eq:model:Gamma:mu}, which can be interpreted as non-instantaneous adsorption processes, we refer the reader to \cite{KnopfLamLiuMetzger_arxiv_2020}.

A first existence and uniqueness result for weak and strong solutions to \eqref{eq:model} was provided in \cite{Liu2019} by constructing solutions to a regularized system, where \eqref{eq:model:Omega:mu} and \eqref{eq:model:Gamma:mu} are extended by $\alpha\para{t}\phi$, and taking the limit $\alpha\searrow0$.\\
A different pathway to proving the existence of solutions to \eqref{eq:model} was used by Garcke and Knopf in \cite{GarckeKnopf20}.
They interpreted model \eqref{eq:model} as a gradient flow equation to the total free energy $\mathcal{E}_\Omega\trkla{\phi}+\mathcal{E}_\Gamma\trkla{\phi}$ and used this structure for their proof of existence and uniqueness of weak solutions.
A comparable wellposedness result derived from a fully discrete finite element scheme can be found in Theorem \ref{th:convergence} of this manuscript.\\
The numerical treatment of the Cahn--Hilliard equation and its variants -- often in combination with Navier--Stokes-equations -- was intensely discussed through the last years.
Consequently, there are various different discretization techniques at hand, which transfer the energy stability \eqref{eq:E1} to a discrete setting.
These techniques include approaches based on convex-concave splittings of the energy (cf. \cite{WiseWangLowengrub2009,ShenWangWangWise2012}) or the polynomial double-well potential (cf.~\cite{Elliott1993} and  \cite{Grun2013c,Grun2013,GarckeHinzeKahle2016, GrunGuillenMetzger2016} for an application of Cahn--Hilliard--Navier--Stokes-systems), stabilized linearly implicit approaches (cf. \cite{XuTang2006,ShenYang2010}), the method of invariant energy quadratization (cf. \cite{ChengYangShen2017,YangZhang2017arxiv}) and the recently developed scalar auxiliary variable approach (see \cite{LiShen2019arxiv}).\\
Although, we will restrict ourselves to non-singular potentials, we do not want to conceal that there are also numerical schemes at hand which are able to deal with the singular potentials $W_{\operatorname{log}}$ and $W_{\operatorname{obst}}$ (see e.g.~\cite{CopettiElliott92,Blowey1992, Blowey1996,Barrett1999,Barrett2001, Frank2020}).\\

In this publication, we are interested in the numerical treatment of \eqref{eq:model}.
A finite difference model for the treatment of the Allen--Cahn-type boundary conditions \eqref{eq:bc:AC} was proposed in \cite{Kenzler2001}.
A first finite element scheme for model \eqref{eq:model} was proposed in the Bachelor's thesis \cite{Trautwein2018} (see also \cite{GarckeKnopf20} for numerical results).
In this thesis, a straightforward, fully implicit discretization based on continuous, piecewise linear finite element functions
was applied to model \eqref{eq:model}, and the arising nonlinear system was solved using Newton's method.
In this publication, we pursue a different approach and investigate the connection between $\phi$ and the chemical potentials.\\
The peculiarity of \eqref{eq:model} is the coupling between the chemical potential $\muO$ defined inside of the domain and the $\muG$ on the boundary.
In the standard Cahn--Hilliard equation \eqref{eq:standard:CH}, the chemical potential is merely a definition in terms of $\phi$.
This allows us to write \eqref{eq:standard:CH} as a sole, nonlinear, fourth-order equation (see e.g. \cite{Grun2013c,GrunGuillenMetzger2016,GarckeHinzeKahle2016}).
In \eqref{eq:model}, however, the chemical potentials $\muO$ and $\muG$ are coupled via the normal derivative $\nabla\phi\cdot\bs{n}$.
Therefore, its weak form formally reads
\begin{subequations}\label{eq:weak}
\begin{align}
\int_0^T\!\!\!\!\iOmega\para{t}\phi\theta &+\mobO\int_0^T\!\!\!\!\iOmega\nabla\muO\cdot\nabla\theta=0\,,\label{eq:weak:Omega}\\
\int_0^T\!\!\!\!\iGamma\para{t}\phi\tilde{\theta}+&\mobG\int_0^T\!\!\!\!\iGamma\nablaG\muG\cdot\nablaG\tilde{\theta}=0\,,\label{eq:weak:Gamma}\\
\begin{split}
\int_0^T\!\!\!\!\iOmega\muO\hat{\theta}+\int_0^T\!\!\!\!\iGamma\muG\hat{\theta}=&\delta\sigma\int_0^T\!\!\!\!\iOmega\nabla\phi\cdot\nabla\hat{\theta} +\delta^{-1}\sigma\int_0^T\!\!\!\!\iOmega F^\prime\trkla{\phi}\hat{\theta} \\
&+ \kappa\deltaG\int_0^T\!\!\!\!\iGamma\nablaG\phi\cdot\nablaG\hat{\theta} +\deltaG^{-1}\int_0^T\!\!\!\!\iGamma G^\prime\trkla{\phi}\hat{\theta}\label{eq:weak:mus}
\end{split}
\end{align}
\end{subequations}
with sufficiently regular $\theta$, $\tilde{\theta}$, and $\hat{\theta}$.
In particular, we have only one equation for $\muO$ and $\muG$.
Consequently, the chemical potentials have to be determined by solving a system consisting of \eqref{eq:weak:mus} and the additional assumption that $\phi$ is continuous on $\overline{\Omega}$.
The latter one translates to the constraint that \eqref{eq:weak:Omega} and \eqref{eq:weak:Gamma} yield compatible results.
Deducing a suitable expression for $\muO$ will be key ingredient for the derivation of an efficient numerical scheme, but also for the numerical analysis, as the existence of a unique (discrete) $\muO$ for any given $\phi$ allows us to reuse techniques from the analysis of the standard Cahn--Hilliard equations.
As we will discuss in Remark \ref{rem:efficiency}, this approach also prevents the arising linear system from degenerating for vanishing time increments.

The outline of the paper is as follows. 
In Section \ref{sec:scheme}, we introduce the discrete function spaces and derive the discrete scheme.
In Section \ref{sec:analysis}, we will establish a first a priori estimate which is discrete counterpart of \eqref{eq:energy:CH}, and use this estimate to prove the existence of discrete solutions.
The main convergence result, Theorem \ref{th:convergence}, which also provides the existence of weak solutions, can be found in Section \ref{sec:convergence}, where we establish improved regularity results and show the convergence of discrete solutions towards weak solutions of \eqref{eq:model}.
For uniqueness results for these weak solutions, we refer the reader to Section 5 in \cite{GarckeKnopf20}.
We will conclude this section by briefly discussing the case of Allen--Cahn-type boundary conditions (cf. Remark \ref{rem:ac}).
By showing that the presented techniques are also applicable for Allen--Cahn-type boundary conditions, we also cover \eqref{eq:bc:AC} and its special case \eqref{eq:dynamic_angle} suggested in \cite{Qian2006}.
In Section \ref{sec:simulation}, we present numerical simulations of phase-separation processes to underline the practicality of the scheme.
We shall also validate our scheme in terms of mass conservation, energy dissipation, and compatibility of \eqref{eq:weak:Omega} and \eqref{eq:weak:Gamma}.

\paragraph{Notation}
Given the spatial domain $\Omega\subset\R^d$ with $d\in\tgkla{2,3}$ and a time interval $\trkla{0,T}$, we denote the
space-time cylinder $\Omega\times\trkla{0,T}$ by $\Omega_T$.
By $W^{k,p}\trkla{\Omega}$ we denote the space of $k$-times weakly differentiable functions with weak derivatives in $L^p\trkla{\Omega}$.
The symbol $W^{k,p}_0\trkla{\Omega}$ stands for the closure of $C^\infty_0\trkla{\Omega}$ in $W^{k,p}\trkla{\Omega}$.
For $p = 2$, we will denote $W^{k,2}\trkla{\Omega}$ by $H^k\trkla{\Omega}$ and $W^{k,2}_0\trkla{\Omega}$ by $H^k_0\trkla{\Omega}$.
The dual space of $H^1\trkla{\Omega}$ will be denoted by $\trkla{H^1\trkla{\Omega}}^\prime$ and the corresponding dual pairing by $\skla{.,.}$.
For a Banach space $X$ and a time interval $I$, the symbol $L^p\trkla{I; X}$ stands for the
parabolic space of $L^p$-integrable functions on $I$ with values in $X$.
We use a notation similar to the one introduced above for function spaces defined on $\Gamma$.
In this publication, we are concerned with domains $\Omega$ having a lipschitzian boundary $\Gamma$.
In this case, the spaces $L^p\trkla{\Gamma}$, $W^{1,p}\trkla{\Gamma}$, and $H^1\trkla{\Gamma}$ are well-defined (cf.~\cite{Kufner77}).
We denote the dual pairing between $\trkla{H^1\trkla{\Gamma}}^\prime$ and $H^1\trkla{\Gamma}$ by $\skla{.,.}_\Gamma$.
In addition, we define the space 
\begin{align}
\Xkappa:=\left\{\begin{matrix}
\tgkla{v\in H^1\trkla{\Omega}\,:\,\trace{v}\in H^1\trkla{\Gamma}} & \text{if~}\kappa>1\,,\\
H^1\trkla{\Omega}&\text{if~}\kappa=0\,,
\end{matrix}\right.
\end{align}
where $\traceop$ defines the trace operator.
For domains with lipschitzian boundaries, the trace operator is uniquely defined and lies in $\mathcal{L}\trkla{W^{1,p}\trkla{\Omega},W^{1-1/p,p}\trkla{\Gamma}}$ (cf.~\cite{Necas2012}).
For brevity, we will sometimes (in particular when the considered function is continuous) neglect the trace operator and write $v$ instead of $\trace{v}$.
\section{Derivation of an efficient numerical scheme}\label{sec:scheme}

We start by introducing the general notation and the discretization techniques used in the considered scheme.
Concerning the discretization with respect to time, we assume that
\begin{itemize}
\labitem{(T)}{item:disc:time} the time interval $I:=[0,T)$ is subdivided in intervals $I_n:=[t_n,t_{n+1})$ with $t_{n+1}=t_n+\tau_n$ for time increments $\tau_n>0$ and $n=0,...,N-1$ with $t_N=T$. For simplicity, we take $\tau_n\equiv\tau=\tfrac{T}{N}$ for $n=0,...,N-1$.
\end{itemize}
The spatial domain $\Omega\subset\R^{d}$ in spatial dimensions $d\in\tgkla{2,3}$ is assumed to be bounded and convex.
To avoid additional technicalities, we will asssume that $\Omega$ is polygonal (or polyhedral, respectively).
We introduce partitions $\Th$ of $\Omega$ and $\Th^\Gamma$ of $\Gamma$ depending on a spatial discretization parameter $h > 0$ satisfying the following assumptions:
\begin{itemize}
\labitem{(S1)}{item:disc:space} Let $\tgkla{\Th}_{h>0}$ be a quasiuniform family (in the sense of \cite{BrennerScott}) of partitions of $\Omega$ into disjoint, open simplices $K$, so that
\begin{align*}
\overline{\Omega}\equiv\bigcup_{K\in\Th}\overline{K}&&&\text{with }\max_{K\in\Th}\diam\trkla{K}\leq h\,.
\end{align*}
\labitem{(S2)}{item:disc:gamma} Let $\tgkla{\Th^\Gamma}_{h>0}$ be a quasiuniform family of partitions of $\Gamma$ into disjoint, open simplices $K^\Gamma$, so that
\begin{align*}
\forall K^\Gamma\in\Th^\Gamma~~\exists!K\in\Th\text{~such~that~} \overline{K^\Gamma}=\overline{K}\cap\Gamma\,,
\end{align*}
and
\begin{align*}
\Gamma\equiv\bigcup_{K^\Gamma\in\Th^\Gamma}\overline{K^\Gamma}&&&\text{with }\max_{K^\Gamma\in\Th^\Gamma}\diam\trkla{K^\Gamma}\leq h\,.
\end{align*}
\end{itemize}
\ref{item:disc:gamma} implies that $\Th^\Gamma$ is compatible to $\Th$ in the sense that all elements in $\Th^\Gamma$ are edges (or faces) of elements in $\Th$.
For the approximation of the phase-field $\phi$ and the chemical potential $\muO$ we use continuous, piecewise linear finite element functions on $\Th$.
This space will be denoted by $\Uhs$ and is spanned by the functions $\tgkla{\chi_{h,k}}_{k=1,...,\dim\Uhs}$ forming a dual basis to the vertices $\tgkla{\bs{x}_k}_{k=1,...,\dim\Uhs}$ of $\Th$, i.e. $\chi_{h,k}\trkla{\bs{x}_k}=\delta_{k,l}$ for $k,l=1,...,\dim\Uhs$.
Analogously, we denote the space of continuous, piecewise linear finite element functions on $\Th^\Gamma$ by $\UhG$.
This space is spanned by functions $\tgkla {\chi_{h,k}^\Gamma}_{k=1,...,\dim\UhG}$ forming a dual basis to the vertices $\tgkla{\bs{x}_k^\Gamma}_{k=1,...,\dim\UhG}$ of $\Th^\Gamma$, i.e. $\chi^\Gamma_{h,k}\trkla{\bs{x}^\Gamma_k}=\delta_{k,l}$ for $k,l=1,...,\dim\Uhs$.
Due to the compatibility condition for $\Th$ and $\Th^\Gamma$, we have 
\begin{align}\label{eq:compatibility}
\UhG=\operatorname{span}\tgkla{\trace{\theta\h}\,:\,\theta\h\in\Uhs}\,.
\end{align}
Without loss of generality, we may assume that the first $\dim\UhG$ vertices of $\Th$ are located on $\Gamma$, i.e. $\tgkla{\bs{x}_k^\Gamma}_{k=1,...,\dim\UhG}=\tgkla{\bs{x}_k}_{k=1,...,\dim\UhG}$.
We define the nodal interpolation operators $\Ihop\,:\,C^0\trkla{\overline{\Omega}}\rightarrow \Uhs$ and $\IhGop\,:\,C^0\trkla{\overline{\Gamma}}\rightarrow \UhG$ by
\begin{align}\label{eq:def:Ih}
\Ih{a}:=\sum_{k=1}^{\dim\Uhs}a\trkla{\bs{x}_k}\chi_{h,k}\,,&&\text{and}&&
\IhG{a}:=\sum_{k=1}^{\dim\UhG}a\trkla{\bs{x}_k}\chi^\Gamma_{h,k}\,.
\end{align}
For future reference, we state the following estimate for the interpolation operators.
\begin{lemma}\label{lem:ihfe}
Let $\Th$ and $\Th^\Gamma$ satisfy \ref{item:disc:space} and \ref{item:disc:gamma}.
Furthermore, let $p\in[1,\infty)$, $1\leq q\leq\infty$, and $q^*=\tfrac{q-1}q$ for $q<\infty$ or $q^*=1$ for $q=\infty$.
Then
\begin{align}
\norm{\trkla{\ids-\Ihop}\tgkla{f\h g\h}}_{L^p\trkla{\Omega}}&\leq C h^2\norm{\nabla f\h}_{L^{pq}\trkla{\Omega}}\norm{\nabla g\h}_{L^{pq^*}\trkla{\Omega}}\,,\\
\norm{\trkla{\ids-\IhGop}\tgkla{f\h g\h}}_{L^p\trkla{\Gamma}}&\leq C h^2\norm{\nablaG f\h}_{L^{pq}\trkla{\Gamma}}\norm{\nablaG g\h}_{L^{pq^*}\trkla{\Gamma}}\,.
\end{align}
holds true for all $f\h,\,g\h\in\Uhs$.
\end{lemma}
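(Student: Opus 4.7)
The statement is a classical super-approximation estimate, and I would prove it elementwise followed by a Hölder sum. The point is that although $f_h g_h$ is \emph{not} in $\Uhs$ (it is piecewise quadratic, not piecewise linear), its second derivatives on each element $K\in\Th$ are fully determined by $\nabla f_h$ and $\nabla g_h$ alone.

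\textbf{Step 1: elementwise quadratic interpolation estimate.} Fix $K\in\Th$. Since $f_h|_K$ and $g_h|_K$ are affine, the product $f_h g_h$ is a quadratic polynomial on $K$ with Hessian
\begin{equation*}
D^2(f_h g_h)|_K \;=\; \nabla f_h \otimes \nabla g_h \;+\; \nabla g_h \otimes \nabla f_h \qquad\text{(a constant tensor on }K\text{).}
\end{equation*}
Because $\mathcal{I}_h$ reproduces affine functions on $K$ and $\{\Th\}$ is quasiuniform, the standard Lagrange interpolation error bound on the reference element together with the Bramble--Hilbert lemma (see \cite{BrennerScott}) yields
\begin{equation*}
\|(\ids-\Ihop)\{f_h g_h\}\|_{L^p(K)} \;\leq\; C\,h_K^{2}\,\|D^2(f_h g_h)\|_{L^p(K)} \;\leq\; C\,h^{2}\,\|\nabla f_h\|_{L^p(K)}\,\|\nabla g_h\|_{L^\infty(K)}\,,
\end{equation*}
or symmetrically, and more usefully before summing, the pointwise bound $|D^2(f_h g_h)|\leq 2|\nabla f_h||\nabla g_h|$ on $K$.

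\textbf{Step 2: summation via Hölder.} Raising the elementwise estimate to the $p$-th power and summing over $K\in\Th$ gives
\begin{equation*}
\|(\ids-\Ihop)\{f_h g_h\}\|_{L^p(\Omega)}^p \;\leq\; C\,h^{2p}\sum_{K\in\Th}\int_K |\nabla f_h|^p |\nabla g_h|^p \,\dx \;=\; C\,h^{2p}\,\||\nabla f_h||\nabla g_h|\|_{L^p(\Omega)}^p.
\end{equation*}
Hölder's inequality with exponents $q$ and $q^* = q/(q-1)$ (with the obvious interpretation for $q=\infty$) then gives
\begin{equation*}
\||\nabla f_h||\nabla g_h|\|_{L^p(\Omega)} \;\leq\; \|\nabla f_h\|_{L^{pq}(\Omega)}\,\|\nabla g_h\|_{L^{pq^*}(\Omega)},
\end{equation*}
and combining the two displays yields the first claimed estimate.

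\textbf{Step 3: boundary version.} On each surface simplex $K^\Gamma\in\Th^\Gamma$ the trace of $f_h$ is affine with respect to the tangential coordinates of the (flat) face, so the same argument applies literally: $f_h g_h$ is a tangentially quadratic polynomial on $K^\Gamma$ with tangential Hessian controlled by $\nabla_\Gamma f_h\otimes\nabla_\Gamma g_h$, and the Bramble--Hilbert estimate on a $(d-1)$-dimensional reference simplex combined with Hölder gives the bound in $L^p(\Gamma)$. The compatibility condition \eqref{eq:compatibility} ensures that $\trace{f_h},\trace{g_h}\in\UhG$, so $\IhGop$ acts as expected.

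\textbf{Expected difficulty.} There is no real obstacle; the only point that requires a moment of care is recognising that the ``extra regularity'' needed for Bramble--Hilbert is supplied for free by the piecewise polynomial structure (linear $\times$ linear is quadratic with constant Hessian on each element), which is precisely what makes the $h^2$ factor appear even though neither $f_h$ nor $g_h$ individually possesses global $H^2$ regularity. The rest is bookkeeping of Hölder exponents.
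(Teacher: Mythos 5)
Your proof is correct and follows essentially the same route as the paper: a standard elementwise interpolation estimate bounding the error by the $W^{2,p}$ (resp.\ $W^{2,\infty}$) seminorm of $f_h g_h$, the observation that this Hessian is controlled pointwise by $\abs{\nabla f_h}\abs{\nabla g_h}$ since both factors are affine on each element, and a final Hölder step with exponents $q$, $q^*$. Your reading of $q^*$ as the Hölder conjugate $q/(q-1)$ is the intended one (the statement's $\tfrac{q-1}{q}$ is a typo), so nothing further is needed.
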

\begin{proof}
Using the standard error estimates for the nodal interpolation operator (cf. \cite{BrennerScott}), we compute on each $K\in\Th$:
\begin{align}\label{eq:Ih:estimate1}
\int_{K}\abs{\trkla{\ids-\Ihop}\tgkla{f\h g\h}}^p\leq C h^{2p}\int_K\abs{f\h g\h}_{W^{2,\infty}\trkla{K}}^p\,.
\end{align}
As $f\h,\,g\h\in\Uhs$, they are linear on each $K$, i.e.~their second spatial derivatives vanish.
Therefore, we obtain
\begin{align}\label{eq:Ih:estimate2}
\begin{split}
\tabs{f\h g\h}_{W^{2,\infty}\trkla{K}}=&\,\max_{i,j=1,...,d}\norm{\partial_i\partial_j\trkla{f\h g\h}}_{L^\infty\trkla{K}}=\max_{i,j=1,...,d}\norm{\partial_if\h\partial_jg\h}_{L^\infty\trkla{K}}\\
\leq&\,\max_{i,j=1,...,d}\norm{\partial_if\h}_{L^\infty\trkla{K}}\norm{\partial_jg\h}_{L^\infty\trkla{K}}
\end{split}
\end{align}
As the first spatial derivatives of $f\h$ and $g\h$ are constant, we combine \eqref{eq:Ih:estimate1} and \eqref{eq:Ih:estimate2} and apply Hölder's inequality to obtain
\begin{align}
\begin{split}
\int_{K}\abs{\trkla{\ids-\Ihop}\tgkla{f\h g\h}}^p \leq Ch^{2p}\sum_{i,j=1}^d\int_K\tabs{\partial_i f\h}^p\tabs{\partial_j g\h}^p\\
\leq Ch^{2p}\norm{\nabla f\h}_{L^{pq}\trkla{K}}^p \norm{\nabla g\h}_{L^{pq^*}\trkla{K}}^p\,.
\end{split}
\end{align}
Similar computations provide the result for $\IhGop$.
\end{proof}
Concerning the potentials $F$ and $G$, we make the following assumptions:
\begin{itemize}
\labitem{(P1)}{item:potentials} $F,G\in C^1\trkla{\R}$ are bounded from below, i.e. there exists a constant $C>0$ such that $F\trkla{s}>-C$ and $G\trkla{s}>-C$ for all $s\in\R$.
Furthermore, there exist convex, non-negative functions $F_+,\,G_+\in C^1\trkla{\R}$ and concave functions $F_-,\,G_-\in C^1\trkla{\R}$ such that $F\equiv F_++F_-$ and $G\equiv G_++G_-$.
\labitem{(P2)}{item:potentialsbounds} The convex and concave parts of $F$ and $G$ can be further decomposed into a polynomial part of degree four and an additional part with a globally Lipschitz-continuous first derivative.
Moreover, there exists $\antikappa\geq0$ such that the concave parts satisfy
\begin{align*}
G_-^\prime\trkla{s_2}\trkla{s_1-s_2}\geq G_-\trkla{s_1}-G_-\trkla{s_2}+\antikappa\abs{s_1-s_2}^2
\end{align*}
for all $s_1,\,s_2\in\R$. In the case $\kappa=0$, we assume that the above assumption holds true for $\antikappa>0$.
\end{itemize}
\begin{remark}
The Assumptions \ref{item:potentials} and \ref{item:potentialsbounds} are in particular satisfied by the polynomial double-well potential $W_{\operatorname{pol}}\trkla{\phi}:=\tfrac14\trkla{1-\phi^2}^2$ and the penalized double-well potential
\begin{align}\label{eq:penpot}
W_{\operatorname{pen}}\trkla{\phi}:=W_{\operatorname{pol}}\trkla{\phi}+C_{\operatorname{pen}}\max\tgkla{\trkla{\abs{\phi}-1},0}^2&&\text{with~} C_{\operatorname{pen}}>0\,.
\end{align}
The latter one is often used in practical computations, as it penalizes $\phi\notin\tekla{-1,+1}$ but does not introduce singularities (cf. \cite{GrunGuillenMetzger2016}).\\
The logarithmic potential $W_{\operatorname{log}}$ and the double obstacle potential $W_{\operatorname{obst}}$, which
have the advantage of restricting $\phi$ to the interval $\tekla{-1,+1}$, do not satisfy \ref{item:potentials} and \ref{item:potentialsbounds} and are therefore not considered in this manuscript.
\end{remark}
\begin{remark}
In this publication, we consider only a convex-concave decomposition of the double-well potential.
Other suitable, energy stable discretization techniques can be found in \cite{GrunGuillenMetzger2016}.
For a comparison of these techniques, we refer to \cite{Metzger2018}.
\end{remark}
Using the notation introduced above and the compatibility condition \eqref{eq:compatibility}, we may write our discrete scheme as follows:
For given $\phi\h\no\in\Uhs$, find $\trkla{\phi\h\nn,\,\muOh\nn\,,\muGh\nn}\in\Uhs\times\Uhs\times\UhG$ satisfying
\begin{subequations}\label{eq:feform}
\begin{align}
\iOmega\Ih{\phi\h\nn \theta\h} + \tau\mobO\iOmega\nabla\muOh\nn\cdot\nabla\theta\h =& \iOmega\Ih{\phi\h\no\theta\h}\,,\label{eq:feform:Omega:phi}\\
\iGamma\IhG{\phi\h\nn \theta\h} + \tau\mobG\iGamma\nablaG\muGh\nn\cdot\nablaG\theta\h =& \iGamma\IhG{\phi\h\no\theta\h}\,,\label{eq:feform:Gamma:phi}
\end{align}
\begin{multline}
\iOmega\Ih{\muOh\nn\theta\h} + \iGamma\IhG{\muGh\nn\theta\h} = \delta\sigma\iOmega\nabla\phi\h\nn\cdot\nabla\theta\h \\
+\delta^{-1}\sigma\iOmega \Ih{\rkla{F_+^\prime\trkla{\phi\h\nn}+F_-^\prime\trkla{\phi\h\no}}\theta\h} \\
+\kappa\deltaG\iGamma\nablaG\phi\h\nn\cdot\nablaG\theta\h +\deltaG^{-1}\iGamma \IhG{\rkla{G_+^\prime\trkla{\phi\h\nn}+G_-^\prime\trkla{\phi\h\no}}\theta\h} \label{eq:feform:mu}
\end{multline}
\end{subequations}
for all $\theta\h\in\Uhs$.
As discussed on the example of the weak formulation \eqref{eq:weak}, \eqref{eq:feform} only provides on equation for both chemical potentials.
Consequently, the goal for this section will be to derive an equivalent formulation for \eqref{eq:feform} with unique expressions for $\muOh\nn$ and $\muGh\nn$, which allows us to reuse the standard techniques established for \eqref{eq:standard:CH}.\\
We define the lumped mass matrices $\MOmega$ and $\MGamma$ via
\begin{subequations}\label{eq:def:matrix}
\begin{align}
\trkla{\MOmega}_{ij} &:=\iOmega\Ih{\chi_{hj}\chi_{hi}}&&\forall i,j=1,...,\dim\Uhs\,,\\
\trkla{\MGamma}_{ij} &:=\iGamma\IhG{\chi^\Gamma_{hj}\chi^\Gamma_{hi}} && \forall i,j=1,...,\dim\UhG\,,
\end{align}
and the stiffness matrices $\LOmega$ and $\LGamma$ via
\begin{align}
\trkla{\LOmega}_{ij} &:=\iOmega\nabla\chi_{hj}\cdot\nabla\chi_{hi}&&\forall i,j=1,...,\dim\Uhs\,,\\
\trkla{\LGamma}_{ij} &:=\iGamma\nablaG\chi^\Gamma_{hj}\cdot\nablaG\chi^\Gamma_{hi} && \forall i,j=1,...,\dim\UhG\,.
\end{align}
\end{subequations}
Furthermore, we collect the nodal values of $\phi\h\nn$, $\phi\h\no$, $\muOh\nn$, and $\muGh\nn$ in the vectors $\Phi\nn$, $\Phi\no$, $\muvec\nn$, and $\muGvec\nn$.
In a slight misuse of notation, we will write $F\trkla{\Phi\nn}$, when we apply a function $F$ to all components of $\Phi\nn$.
With this notation, we are able to rewrite \eqref{eq:feform} as
\begin{subequations}\label{eq:matrixform}
\begin{align}
\MOmega\Phi\nn +\tau\mobO\LOmega\muvec\nn=&\MOmega\Phi\no\,,\label{eq:matrixform:Omega:phi}\\
\MGamma\trkla{\restr{\Phi\nn}{\GammaSymb}} +\tau\mobG\LGamma\muGvec\nn=&\MGamma\trkla{\restr{\Phi\no}{\GammaSymb}}\,,\label{eq:matrixform:Gamma:phi}
\end{align}
\begin{multline}\label{eq:matrixform:mu}
\MOmega\muvec\nn + \extend{\rkla{\MGamma\muGvec\nn}}{\OmegaSymb}=\delta\sigma\LOmega\Phi\nn+\delta^{-1}\sigma\MOmega F_+^\prime\trkla{\Phi\nn} +\delta^{-1}\sigma\MOmega F_-^\prime\trkla{\Phi\no}\\
+\extend{\rkla{\deltaG\kappa\LGamma\trkla{\restr{\Phi\nn}{\GammaSymb}} +\deltaG^{-1}\MGamma G^\prime_+\trkla{\restr{\Phi\nn}{\GammaSymb}} +\deltaG^{-1}\MGamma G^\prime_-\trkla{\restr{\Phi\no}{\GammaSymb}}}} {\OmegaSymb}\,.
\end{multline}
\end{subequations}
Here, we used the extension operator $\extend{.}{\OmegaSymb}\,:\, \R^{\dim\UhG}\rightarrow \R^{\dim\Uhs}$ defined via
\begin{align}
\R^{\dim\UhG}\ni A\mapsto \begin{psmallmatrix}A\\ 0\end{psmallmatrix}\in\R^{\dim\Uhs}
\end{align}
and the restriction operator $\restr{.}{\GammaSymb}\,:\,\R^{\dim\Uhs}\rightarrow\R^{\dim\UhG}$, which restricts a vector its first $\dim\UhG$ entries.
\\
In order to derive a scheme allowing to solve \eqref{eq:matrixform}, we define restriction operators for matrices.
In particular, we will split a matrix $\mb{A}\in\R^{\dim\Uhs\times\dim\Uhs}$ into submatrices
\begin{align}\label{eq:def:restriction}
\begin{matrix*}[l]
&\restr{\mb{A}}{\GammaSymb\times\GammaSymb}\in\R^{\dim\UhG\times\dim\UhG}\,,& \restr{\mb{A}}{\GammaSymb\times\InnerSymb}\in\R^{\dim\UhG\times\trkla{\dim\Uhs-\dim\UhG}}\,,\\
& \restr{\mb{A}}{\InnerSymb\times\GammaSymb}\in\R^{\trkla{\dim\Uhs-\dim\UhG}\times\dim\UhG}\,,&\restr{\mb{A}}{\InnerSymb\times\InnerSymb}\in\R^{\trkla{\dim\Uhs-\dim\UhG}\times\trkla{\dim\Uhs-\dim\UhG}}\,,\\
& \restr{\mb{A}}{\GammaSymb\times\OmegaSymb}\in\R^{\dim\UhG\times\dim\Uhs}\,, &\restr{\mb{A}}{\InnerSymb\times\OmegaSymb}\in\R^{\trkla{\dim\Uhs-\dim\UhG}\times\dim\Uhs}\,,\\
&\restr{\mb{A}}{\OmegaSymb\times\GammaSymb}\in\R^{\dim\Uhs\times\dim\UhG}\,,\quad\text{and }&\restr{\mb{A}}{\OmegaSymb\times\InnerSymb}\in\R^{\dim\Uhs\times\trkla{\dim\Uhs-\dim\UhG}}\,,
\end{matrix*}
\end{align}
such that
\begin{align}
\mb{A}=\begin{pmatrix}
\restr{\mb{A}}{\GammaSymb\times\GammaSymb} & \restr{\mb{A}}{\GammaSymb\times\InnerSymb}\\
\restr{\mb{A}}{\InnerSymb\times\GammaSymb} & \restr{\mb{A}}{\InnerSymb\times\InnerSymb}
\end{pmatrix} = \begin{pmatrix}
\restr{\mb{A}}{\GammaSymb\times\OmegaSymb}\\\restr{\mb{A}}{\InnerSymb\times\OmegaSymb}
\end{pmatrix} = \begin{pmatrix}
\restr{\mb{A}}{\OmegaSymb\times\GammaSymb}&\restr{\mb{A}}{\OmegaSymb\times\InnerSymb}
\end{pmatrix}\,.
\end{align}
Hence, the chemical potentials are given as solutions of the $\trkla{\dim\Uhs+\dim\UhG}\times\trkla{\dim\Uhs+\dim\UhG}$-system
\begin{align}\label{eq:splitMatrix}
\begin{pmatrix}
\restr{\MOmega}{\GammaSymb\times\GammaSymb} \!& \mb{0} &\! \MGamma\\
\mb{0} \!& \restr{\MOmega}{\InnerSymb\times\InnerSymb} &\! \mb{0}\\
\mobO\restr{\MOmega^{-1}}{\GammaSymb\times\OmegaSymb}\restr{\LOmega}{\OmegaSymb\times\GammaSymb} \!& \mobO\restr{\MOmega^{-1}}{\GammaSymb\times\OmegaSymb}\restr{\LOmega}{\OmegaSymb\times\InnerSymb} &\! -\mobG\MGamma^{-1}\LGamma 
\end{pmatrix}\!\!\!
\begin{pmatrix}
\restr{\muvec\nn}{\GammaSymb}\\
\restr{\muvec\nn}{\InnerSymb}\\
\muGvec\nn
\end{pmatrix}\!
=\!\begin{pmatrix}
\RHSG\trkla{\Phi\nn}\\
\RHSO\trkla{\Phi\nn}\\
0
\end{pmatrix}
\end{align}
\begin{subequations}\label{eq:rhs}
\begin{align}
\begin{split}
\text{with~}\RHSG\trkla{\Phi\nn}:=& \delta\sigma\restr{\LOmega}{\GammaSymb\times\OmegaSymb}\Phi\nn + \delta^{-1}\sigma\restr{\MOmega}{\GammaSymb\times\OmegaSymb}F_+^\prime\trkla{\Phi\nn} +\delta^{-1}\sigma \restr{\MOmega}{\GammaSymb\times\OmegaSymb}F_-^\prime\trkla{\Phi\no}\\
&+\kappa\deltaG\LGamma\restr{\Phi\nn}{\GammaSymb} + \deltaG^{-1}\MGamma G^\prime_+\trkla{\restr{\Phi\nn}{\GammaSymb}}+ \deltaG^{-1}\MGamma G^\prime_-\trkla{\restr{\Phi\no}{\GammaSymb}}
\end{split}\\
\text{and~} \RHSO\trkla{\Phi\nn}:=& \delta\sigma\restr{\LOmega}{\InnerSymb\times\OmegaSymb}\Phi\nn + \delta^{-1}\sigma\restr{\MOmega}{\InnerSymb\times\OmegaSymb}F_+^\prime\trkla{\Phi\nn} +\delta^{-1}\sigma \restr{\MOmega}{\InnerSymb\times\OmegaSymb}F_-^\prime\trkla{\Phi\no}\,.
\end{align}
\end{subequations}
Here, the first two lines are a consequence of \eqref{eq:matrixform:mu} and the last line guarantees that \eqref{eq:matrixform:Omega:phi} and \eqref{eq:matrixform:Gamma:phi} provide the same result for $\restr{\Phi\nn}{\GammaSymb}$.\\
As the \eqref{eq:matrixform} is nonlinear in $\Phi\nn$, computing a possible solution requires the application of an iterative scheme (e.g. Newton's method) and therefore solving \eqref{eq:splitMatrix} multiple times per time step.
Hence, solving a $\trkla{\dim\Uhs+\dim\UhG} \times\trkla{\dim\Uhs+\dim\UhG}$-system each time is not desirable and we have to continue reducing the complexity of the system.

From the second line in \eqref{eq:splitMatrix}, we immediately get
\begin{align}\label{eq:def:muInner}
\restr{\muvec\nn}{\InnerSymb}= \restr{\MOmega}{\InnerSymb\times\InnerSymb}^{-1}\RHSO\trkla{\Phi\nn}\,,
\end{align}
while the first line provides 
\begin{align}\label{eq:def:muG}
\muGvec\nn = -\MGamma^{-1} \restr{\MOmega}{\GammaSymb\times\GammaSymb} \restr{\muvec\nn}{\Gamma} +\MGamma^{-1} \RHSG\trkla{\Phi\nn}\,.
\end{align}
Using \eqref{eq:def:muInner} and \eqref{eq:def:muG}, we may write the last line in \eqref{eq:splitMatrix} as
\begin{align}
\begin{split}
\mobO \restr{\MOmega^{-1}}{\GammaSymb\times\OmegaSymb}\restr{\LOmega}{\OmegaSymb\times\GammaSymb} \restr{\muvec\nn}{\GammaSymb}=&-\mobO\restr{\MOmega^{-1}}{\GammaSymb\times\OmegaSymb}\restr{\LOmega}{\OmegaSymb\times\InnerSymb}\restr{\muvec\nn}{\InnerSymb}+\mobG\MGamma^{-1}\LGamma \muGvec\nn\\
=&-\mobO\restr{\MOmega^{-1}}{\GammaSymb\times\OmegaSymb}\restr{\LOmega}{\OmegaSymb\times\InnerSymb}\restr{\MOmega}{\InnerSymb\times\InnerSymb}^{-1}\RHSO\trkla{\Phi\nn}\\
&-\mobG\MGamma^{-1}\LGamma\MGamma^{-1} \restr{\MOmega}{\GammaSymb\times\GammaSymb} \restr{\muvec\nn}{\Gamma}\\
&+\mobG\MGamma^{-1}\LGamma\MGamma^{-1} \RHSG\trkla{\Phi\nn}\,,
\end{split}
\end{align}
and therefore
\begin{multline}\label{eq:mu:step1}
\rkla{\mobO \restr{\MOmega^{-1}}{\GammaSymb\times\OmegaSymb}\restr{\LOmega}{\OmegaSymb\times\GammaSymb} +\mobG\MGamma^{-1}\LGamma\MGamma^{-1} \restr{\MOmega}{\GammaSymb\times\GammaSymb}}\restr{\muvec\nn}{\GammaSymb}\\
 = -\mobO\restr{\MOmega^{-1}}{\GammaSymb\times\OmegaSymb}\restr{\LOmega}{\OmegaSymb\times\InnerSymb}\restr{\MOmega}{\InnerSymb\times\InnerSymb}^{-1}\RHSO\trkla{\Phi\nn}+\mobG\MGamma^{-1}\LGamma\MGamma^{-1} \RHSG\trkla{\Phi\nn}\,.
\end{multline}
As $\MOmega^{-1}$ is a diagonal matrix, $\restr{\MOmega^{-1}}{\GammaSymb\times\OmegaSymb}\restr{\LOmega}{\OmegaSymb\times\GammaSymb}=\restr{\MOmega^{-1}}{\GammaSymb\times\GammaSymb}\restr{\LOmega}{\GammaSymb\times\GammaSymb}$ holds true.
This allows us to multiply \eqref{eq:mu:step1} by $\restr{\MOmega}{\GammaSymb\times\GammaSymb}$ to obtain
\begin{multline}\label{eq:mu:step2}
\rkla{\mobO \restr{\LOmega}{\GammaSymb\times\GammaSymb} +\mobG\restr{\MOmega}{\GammaSymb\times\GammaSymb}\MGamma^{-1}\LGamma\MGamma^{-1} \restr{\MOmega}{\GammaSymb\times\GammaSymb}}\restr{\muvec\nn}{\GammaSymb}\\
 = -\mobO\restr{\LOmega}{\GammaSymb\times\InnerSymb}\restr{\MOmega}{\InnerSymb\times\InnerSymb}^{-1}\RHSO\trkla{\Phi\nn}+\mobG\restr{\MOmega}{\GammaSymb\times\GammaSymb}\MGamma^{-1}\LGamma\MGamma^{-1} \RHSG\trkla{\Phi\nn}\,.
\end{multline}
In order to show that \eqref{eq:mu:step2} provides a well-defined expression for $\restr{\muvec\nn}{\GammaSymb}$, we need to prove that the matrix on the left-hand side is indeed invertible.
\begin{lemma}
The matrix $\rkla{\mobO \restr{\LOmega}{\GammaSymb\times\GammaSymb} +\mobG\restr{\MOmega}{\GammaSymb\times\GammaSymb}\MGamma^{-1}\LGamma\MGamma^{-1} \restr{\MOmega}{\GammaSymb\times\GammaSymb}}$, that is defined via \eqref{eq:def:matrix} and \eqref{eq:def:restriction}, is symmetric, positive definite.
\end{lemma}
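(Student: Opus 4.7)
The plan is to verify symmetry by inspection and then to rewrite the quadratic form associated with the matrix as a sum of two $L^2$-gradient norms of suitable finite-element functions; positive semi-definiteness and strict positivity both follow from this representation.

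Symmetry is essentially immediate. The stiffness matrices $\LOmega$ and $\LGamma$ are Gram matrices with respect to the $H^1$-semi-inner products by \eqref{eq:def:matrix}, hence symmetric, and principal submatrices such as $\restr{\LOmega}{\GammaSymb\times\GammaSymb}$ inherit symmetry. The lumped mass matrices $\MOmega$ and $\MGamma$ are diagonal with positive entries, so their inverses and restrictions remain diagonal. Therefore $D := \MGamma^{-1}\restr{\MOmega}{\GammaSymb\times\GammaSymb}$ is a symmetric diagonal matrix, the second summand takes the form $D\LGamma D$, and summing two symmetric matrices yields a symmetric matrix.

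For positive semi-definiteness, given $v \in \R^{\dim\UhG}$, I would associate to it the finite-element function $\tilde w_h \in \Uhs$ defined by $\tilde w_h(\bs{x}_k) = v_k$ for $k = 1,\dots,\dim\UhG$ and $\tilde w_h(\bs{x}_k) = 0$ for $k > \dim\UhG$, together with the vector $u := Dv \in \R^{\dim\UhG}$ and the surface finite-element function $w_h^\Gamma := \sum_k u_k \chi_{h,k}^\Gamma \in \UhG$. A direct expansion in the nodal bases using \eqref{eq:def:matrix} then yields
\begin{align*}
v^T \restr{\LOmega}{\GammaSymb\times\GammaSymb} v = \iOmega \abs{\nabla \tilde w_h}^2,\qquad v^T \restr{\MOmega}{\GammaSymb\times\GammaSymb} \MGamma^{-1}\LGamma\MGamma^{-1} \restr{\MOmega}{\GammaSymb\times\GammaSymb} v = u^T \LGamma u = \iGamma \abs{\nablaG w_h^\Gamma}^2,
\end{align*}
both non-negative, and since $\mobO,\mobG>0$ the full quadratic form is non-negative.

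For strict positivity, suppose $v \neq 0$ annihilates the quadratic form. Then both integrals above vanish; in particular, $\nabla \tilde w_h \equiv 0$ on $\Omega$, so $\tilde w_h$ is constant on the convex (and thus connected) domain $\Omega$. Because $\tilde w_h$ vanishes at every interior vertex of $\Th$ by construction, this constant must be zero, forcing $v = 0$ and contradicting the assumption. The only non-routine ingredient is the existence of at least one interior vertex of $\Th$, which I regard as the main (mild) obstacle; it follows from the quasi-uniformity in \ref{item:disc:space} together with the boundedness and polygonal or polyhedral geometry of $\Omega$, since for $h$ sufficiently small the union of simplices adjacent to $\Gamma$ cannot exhaust $\Omega$.
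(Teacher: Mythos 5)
Your proof is correct and follows essentially the same route as the paper: both summands are symmetric positive semi-definite, and strict definiteness is extracted from the first summand alone by extending $v$ by zero to the interior nodes and using that the kernel of $\LOmega$ consists only of constant vectors. The one point where you go beyond the paper is in flagging (and sketching a justification for) the need for at least one interior vertex of $\Th$, which the paper's argument uses implicitly without comment.
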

\begin{proof}
It is obvious that $\mobO \restr{\LOmega}{\GammaSymb\times\GammaSymb}$ and $\mobG\restr{\MOmega}{\GammaSymb\times\GammaSymb}\MGamma^{-1}\LGamma\MGamma^{-1} \restr{\MOmega}{\GammaSymb\times\GammaSymb}$ are symmetric, positive semi-definite matrices.
Therefore, it will be sufficient to show that $A^T\restr{\LOmega}{\GammaSymb\times\GammaSymb}A>0$ for all $0\neq A\in\R^{\dim\UhG}$ to complete the proof.
This is equivalent to showing 
\begin{align}\label{eq:spd}
\tilde{A}^T\LOmega\tilde{A}>0\text{~~with~~}\tilde{A}=\extend{A}{\Omega}=\begin{psmallmatrix}A\\0\end{psmallmatrix}\text{~~for~all~~}0\neq A\in\R^{\dim\UhG}\,.
\end{align}
From \eqref{eq:def:matrix}, we have that $\LOmega$ is symmetric, positive semi-definite with only constant vectors corresponding to the zero eigenvalue.
As the restrictions in \eqref{eq:spd} do not allow for constant vectors, the proof is complete. 
\end{proof}
Combining \eqref{eq:mu:step2} with \eqref{eq:def:muInner}, we obtain an expression for the chemical potential which requires us to solve only a $\dim\UhG$ by $\dim\UhG$ linear system with a sparse, symmetric, positive definite matrix.
Having an expression for the chemical potential, we propose the following nonlinear equation for $\Phi\nn$.
For given $\Phi\no\in\R^{\dim\Uhs}$, we compute $\Phi\nn\in\R^{\dim\Uhs}$ satisfying
\begin{multline}\label{eq:scheme}
\Phi\nn +\tau\mobO\MOmega^{-1}\LOmega \begin{pmatrix}
\rkla{\mobO \restr{\LOmega}{\GammaSymb\times\GammaSymb} +\mobG\restr{\MOmega}{\GammaSymb\times\GammaSymb}\MGamma^{-1}\LGamma\MGamma^{-1} \restr{\MOmega}{\GammaSymb\times\GammaSymb}}^{-1} & \mb{0}\\ \mb{0}&\mathds{1}
\end{pmatrix}\\
\cdot\begin{pmatrix}
-\mobO\restr{\LOmega}{\GammaSymb\times\InnerSymb}\restr{\MOmega}{\InnerSymb\times\InnerSymb}^{-1}\RHSO\trkla{\Phi\nn}+\mobG\restr{\MOmega}{\GammaSymb\times\GammaSymb}\MGamma^{-1}\LGamma\MGamma^{-1} \RHSG\trkla{\Phi\nn}\\
\restr{\MOmega}{\InnerSymb\times\InnerSymb}^{-1}\RHSO\trkla{\Phi\nn}
\end{pmatrix}
=\Phi\no\,.
\end{multline}
Here, $\RHSG$ and $\RHSO$, which are defined in \eqref{eq:rhs}, also depend on the known values $\Phi\no$.
As we will show in the next section, solutions $\Phi\nn$ to \eqref{eq:scheme} satisfy the compatibility condition used in \eqref{eq:splitMatrix}, which allows us to recover \eqref{eq:feform}.
\begin{remark}\label{rem:efficiency}
At this point, we want to discuss the advantages of formulation \eqref{eq:scheme}.
Although \eqref{eq:feform} can be written in a symmetric form, trying to solve \eqref{eq:feform} directly has one major flaw.
As there is no explicit formula for the chemical potentials available, one has to solve for $\phi\h\nn$, $\muOh\nn$, and $\muGh\nn$ monolithically.
However, this system degenerates for $\tau\searrow0$, i.e.~if we opt for a small time increment to capture rapid changes, we end up with an ill-conditioned system:
For $\tau\searrow0$, \eqref{eq:feform:Omega:phi} and \eqref{eq:feform:Gamma:phi} reduce to $\phi\h\nn=\phi\h\no$, i.e.~the dependencies on the chemical potentials vanishe leaving only \eqref{eq:feform:mu} to determine both potentials.\\
The proposed scheme \eqref{eq:scheme}, on the other hand, is based on another $\tau$-independent relation between the chemical potentials, which prevents the system from becoming ill-conditioned for vanishing time increments.
For an illustration of the dependence of the condition numbers on the size of the time increment based on practical computations, we refer the reader to Section \ref{sec:simulation}.\\
The downside of \eqref{eq:scheme} is that it requires us to solve an additional smaller linear system with the matrix
 $\rkla{\mobO \restr{\LOmega}{\GammaSymb\times\GammaSymb} +\mobG\restr{\MOmega}{\GammaSymb\times\GammaSymb}\MGamma^{-1}\LGamma\MGamma^{-1} \restr{\MOmega}{\GammaSymb\times\GammaSymb}}$ repeatedly.
However, as this matrix is symmetric and positive definite, it can be tackled efficiently using a conjugate gradient method.
\end{remark}
\section{Stability and existence of discrete solutions}\label{sec:analysis}
In this section, we analyze the discrete scheme \eqref{eq:scheme} proposed in the previous section.
As we derived explicit expressions for $\muOh\nn$ and $\muGh\nn$ in the previous section, we could return to the variational form \eqref{eq:feform} and derive stability and existence results from there.
Namely, testing \eqref{eq:feform:Omega:phi} by $\muOh\nn$, \eqref{eq:feform:Gamma:phi} by $\muGh\nn$, and \eqref{eq:feform:mu} by $\trkla{\phi\h\nn-\phi\h\no}$ will provide a discrete version of \eqref{eq:energy:CH}.
However, as \eqref{eq:scheme} is the formulation we suggest to implement, we establish first stability and existence results based on this formulation. By doing so, we shall verify that all information from \eqref{eq:feform} are preserved in \eqref{eq:scheme} and shed light on the structure of \eqref{eq:scheme}.\\
Although \eqref{eq:scheme} is entirely written in terms of the unknown quantity $\Phi\nn$, we will continue using $\muvec\nn$ and $\muGvec\nn$, which are defined in \eqref{eq:mu:step2}, \eqref{eq:def:muInner}, and \eqref{eq:def:muG}, to simplify the notation.
For the ease of representation, we will set $\sigma=\delta=\deltaG=1$ for the remainder of this publication.
As a first step, we shall verify that \eqref{eq:scheme} indeed satisfies the compatibility constraint $\mobO\restr{\MOmega^{-1}}{\GammaSymb\times\GammaSymb}\restr{\LOmega}{\GammaSymb\times\OmegaSymb}\muvec\nn=\mobG\MGamma^{-1}\LGamma\muGvec\nn$.
This auxiliary result allows us derive an a priori stability result for \eqref{eq:scheme} which serves as the corner stone for proving the existence of discrete solutions without additional restrictions on $h$ or $\tau$.
\begin{lemma}\label{lem:flowmaps}
Let $\muvec\nn$ and $\muGvec\nn$ be defined via \eqref{eq:mu:step2}, \eqref{eq:def:muInner}, and \eqref{eq:def:muG}.
Then the identity 
\begin{align*}
 \mobG\MGamma^{-1}\LGamma\muGvec\nn-\mobO\restr{\MOmega^{-1}}{\GammaSymb\times\GammaSymb}\restr{\LOmega}{\GammaSymb\times\OmegaSymb}\muvec\nn=0
\end{align*}
holds true.
\end{lemma}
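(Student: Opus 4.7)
The plan is to reverse engineer the derivation of \eqref{eq:mu:step2}: that equation was obtained precisely by imposing the sought-after compatibility condition, so the identity should fall out by carefully unwinding the definitions \eqref{eq:def:muG}, \eqref{eq:def:muInner}, and \eqref{eq:mu:step2} in the right order.

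First I would substitute the formula \eqref{eq:def:muG} into the left-hand side of the claimed identity to obtain
\begin{align*}
\mobG\MGamma^{-1}\LGamma\muGvec\nn
= -\mobG\MGamma^{-1}\LGamma\MGamma^{-1}\restr{\MOmega}{\GammaSymb\times\GammaSymb}\restr{\muvec\nn}{\GammaSymb}
 +\mobG\MGamma^{-1}\LGamma\MGamma^{-1}\RHSG\trkla{\Phi\nn}.
\end{align*}
For the right-hand side, I would split $\restr{\LOmega}{\GammaSymb\times\OmegaSymb}\muvec\nn
=\restr{\LOmega}{\GammaSymb\times\GammaSymb}\restr{\muvec\nn}{\GammaSymb}+\restr{\LOmega}{\GammaSymb\times\InnerSymb}\restr{\muvec\nn}{\InnerSymb}$ and then insert \eqref{eq:def:muInner} to eliminate $\restr{\muvec\nn}{\InnerSymb}$.

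Next I would multiply the resulting difference by the diagonal (hence invertible) matrix $\restr{\MOmega}{\GammaSymb\times\GammaSymb}$ and invoke the identity $\restr{\MOmega^{-1}}{\GammaSymb\times\GammaSymb}\restr{\LOmega}{\GammaSymb\times\GammaSymb}=\restr{\MOmega^{-1}}{\GammaSymb\times\OmegaSymb}\restr{\LOmega}{\OmegaSymb\times\GammaSymb}$ already used between \eqref{eq:mu:step1} and \eqref{eq:mu:step2}. After collecting the $\restr{\muvec\nn}{\GammaSymb}$-terms on one side and the right-hand side contributions on the other, the difference between the two sides takes exactly the form
\begin{align*}
\rkla{\mobO\restr{\LOmega}{\GammaSymb\times\GammaSymb}+\mobG\restr{\MOmega}{\GammaSymb\times\GammaSymb}\MGamma^{-1}\LGamma\MGamma^{-1}\restr{\MOmega}{\GammaSymb\times\GammaSymb}}\restr{\muvec\nn}{\GammaSymb}
+\mobO\restr{\LOmega}{\GammaSymb\times\InnerSymb}\restr{\MOmega}{\InnerSymb\times\InnerSymb}^{-1}\RHSO\trkla{\Phi\nn}
-\mobG\restr{\MOmega}{\GammaSymb\times\GammaSymb}\MGamma^{-1}\LGamma\MGamma^{-1}\RHSG\trkla{\Phi\nn},
\end{align*}
which is exactly the defining equation \eqref{eq:mu:step2} for $\restr{\muvec\nn}{\GammaSymb}$ and hence vanishes.

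There is no real obstacle here; the argument is essentially algebraic bookkeeping, and the only subtlety is making sure the block decompositions \eqref{eq:def:restriction} are consistently applied and that the diagonal structure of $\MOmega$ is invoked to commute $\restr{\MOmega^{-1}}{\GammaSymb\times\GammaSymb}$ with the appropriate restrictions of $\LOmega$. The invertibility of the matrix appearing in \eqref{eq:mu:step2} (shown in the preceding lemma) guarantees that we may meaningfully read off the identity once both sides have been brought to that common form.
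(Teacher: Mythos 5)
Your proposal is correct and follows essentially the same route as the paper's proof: substitute \eqref{eq:def:muG} into the left-hand side, split $\restr{\LOmega}{\GammaSymb\times\OmegaSymb}\muvec\nn$ into its $\GammaSymb\times\GammaSymb$ and $\GammaSymb\times\InnerSymb$ blocks, eliminate $\restr{\muvec\nn}{\InnerSymb}$ via \eqref{eq:def:muInner}, and recognize the resulting combination as the defining relation \eqref{eq:mu:step2}. The paper organizes the same algebra slightly differently (grouping the terms as $I+II+III$ and showing $II=-III-I$ without first multiplying through by $\restr{\MOmega}{\GammaSymb\times\GammaSymb}$), but this is a cosmetic difference.
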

\begin{proof}
Using \eqref{eq:def:muInner}, we compute
\begin{align}
\begin{split}
\mobG&\MGamma^{-1}\LGamma\muGvec\nn-\mobO\restr{\MOmega^{-1}}{\GammaSymb\times\GammaSymb}\restr{\LOmega}{\GammaSymb\times\OmegaSymb}\muvec\nn \\
=& -\mobG\MGamma^{-1}\LGamma\MGamma^{-1}\restr{\MOmega}{\GammaSymb\times\GammaSymb}\restr{\muvec\nn}{\GammaSymb}+ \mobG\MGamma^{-1}\LGamma\MGamma^{-1}\RHSG\trkla{\Phi\nn} \\
&-\mobO\restr{\MOmega^{-1}}{\GammaSymb\times\GammaSymb}\restr{\LOmega}{\GammaSymb\times\OmegaSymb} \muvec\nn\\
=&\mobG \MGamma^{-1}\LGamma\MGamma^{-1}\RHSG\trkla{\Phi\nn} \\
&- \restr{\MOmega^{-1}}{\GammaSymb\times\GammaSymb}\rkla{\mobO\restr{\LGamma}{\GammaSymb\times\GammaSymb}+\mobG\restr{\MOmega}{\GammaSymb\times\GammaSymb}\MGamma^{-1}\LGamma\MGamma^{-1}\restr{\MOmega}{\GammaSymb\times\GammaSymb}}\restr{\muvec\nn}{\GammaSymb}\\
&-\mobO\restr{\MOmega^{-1}}{\GammaSymb\times\GammaSymb}\restr{\LOmega}{\GammaSymb\times\InnerSymb}\restr{\muvec\nn}{\InnerSymb} =:I+II+III\,.
\end{split}
\end{align}
Recalling \eqref{eq:mu:step2} and \eqref{eq:def:muG}, we obtain
\begin{align}
\begin{split}
II=&\mobO\restr{\MOmega^{-1}}{\GammaSymb\times\GammaSymb}\restr{\LOmega}{\GammaSymb\times\InnerSymb}\restr{\MOmega^{-1}}{\InnerSymb\times\InnerSymb}\RHSO\trkla{\Phi\nn}-\mobG\MGamma^{-1}\LGamma\MGamma^{-1}\RHSG\trkla{\Phi\nn}\\
=&\mobO\restr{\MOmega^{-1}}{\GammaSymb\times\GammaSymb}\restr{\LOmega}{\GammaSymb\times\InnerSymb}\restr{\muvec\nn}{\InnerSymb}-\mobG\MGamma^{-1}\LGamma\MGamma^{-1}\RHSG\trkla{\Phi\nn}=-III-I\,,
\end{split}
\end{align}
which completes the proof.
\end{proof}
This result allows us to show that the phasefield parameter is conserved in $\Omega$ and on $\Gamma$.
Multiplying \eqref{eq:scheme} by $\ones^T\MOmega$ and by $\extend{\onesG^T\MGamma}{\OmegaSymb}$ proves the following corollary.
\begin{corollary}\label{cor:meanvalue}
Let $\Phi\nn$ be a discrete solution of \eqref{eq:scheme}. Then 
\begin{align*}
\ones^T\MOmega\Phi\nn = \ones^T\MOmega\Phi\no &&\onesG^T\MGamma\restr{\Phi\nn}{\GammaSymb}=\onesG^T\MGamma\restr{\Phi\no}{\GammaSymb}\,
\end{align*}
with $\ones:=\trkla{1,...,1}^T\in\R^{\dim\Uhs}$ and $\onesG:=\restr{\ones}{\GammaSymb}$.
\end{corollary}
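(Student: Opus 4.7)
The plan is to test the vector equation \eqref{eq:scheme} against two carefully chosen row vectors that annihilate the nonlinear chemical-potential contributions. A preliminary observation is that the large block expression multiplying $\tau\mobO\MOmega^{-1}\LOmega$ in \eqref{eq:scheme} is precisely the nodal chemical-potential vector $\muvec\nn$: its top block reproduces $\restr{\muvec\nn}{\GammaSymb}$ by \eqref{eq:mu:step2}, while its bottom block is $\restr{\muvec\nn}{\InnerSymb}$ by \eqref{eq:def:muInner}. Consequently \eqref{eq:scheme} may be read as
\[
 \Phi\nn + \tau\mobO\MOmega^{-1}\LOmega\,\muvec\nn = \Phi\no,
\]
which is just \eqref{eq:matrixform:Omega:phi} divided by $\MOmega$.

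For the first claim, I would multiply the displayed identity from the left by $\ones^T\MOmega$; the factors $\MOmega$ and $\MOmega^{-1}$ cancel and the middle term becomes $\tau\mobO\,\ones^T\LOmega\muvec\nn$. Since the nodal basis $\tgkla{\chi_{h,k}}$ is a partition of unity, the row sums of $\LOmega$ vanish, i.e.\ $\ones^T\LOmega = \mb{0}$, and $\ones^T\MOmega\Phi\nn = \ones^T\MOmega\Phi\no$ follows immediately.

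For the second claim, I would test with the row vector $\extend{\onesG^T\MGamma}{\OmegaSymb}$ of length $\dim\Uhs$, that is, the vector which equals $\onesG^T\MGamma$ on boundary indices and vanishes on interior indices. The linear contributions then produce $\onesG^T\MGamma\restr{\Phi\nn}{\GammaSymb}$ on both sides, as desired. For the nonlinear contribution, the fact that $\MOmega$ is a lumped (hence diagonal) mass matrix yields
\[
 \extend{\onesG^T\MGamma}{\OmegaSymb}\MOmega^{-1}\LOmega\,\muvec\nn = \onesG^T\MGamma\,\restr{\MOmega^{-1}}{\GammaSymb\times\GammaSymb}\restr{\LOmega}{\GammaSymb\times\OmegaSymb}\,\muvec\nn,
\]
and Lemma \ref{lem:flowmaps} rewrites the right-hand side as $\mobO^{-1}\mobG\,\onesG^T\MGamma\MGamma^{-1}\LGamma\muGvec\nn = \mobO^{-1}\mobG\,\onesG^T\LGamma\muGvec\nn$. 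The same partition-of-unity argument applied to the surface basis gives $\onesG^T\LGamma = \mb{0}$, so the nonlinear term drops out and $\onesG^T\MGamma\restr{\Phi\nn}{\GammaSymb} = \onesG^T\MGamma\restr{\Phi\no}{\GammaSymb}$ is established.

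The only non-trivial ingredient here is Lemma \ref{lem:flowmaps}: it is exactly the compatibility of the two discrete flow maps that allows the trace of $\MOmega^{-1}\LOmega\muvec\nn$ to be converted into $\MGamma^{-1}\LGamma\muGvec\nn$, after which the kernel property of $\LGamma$ closes the argument in the same way as on $\Omega$. Everything else is a mechanical manipulation of the block decomposition and of the lumped-mass diagonalization, so I do not foresee any further obstacle.
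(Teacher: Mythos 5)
Your proposal is correct and follows exactly the paper's route: the paper's entire proof is the sentence preceding the corollary, namely testing \eqref{eq:scheme} by $\ones^T\MOmega$ and by $\extend{\onesG^T\MGamma}{\OmegaSymb}$, with the row-sum (partition-of-unity) property of $\LOmega$ and $\LGamma$ and Lemma \ref{lem:flowmaps} doing the work just as you describe. You have simply written out the details the paper leaves implicit.
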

Using the above auxiliary results, we are now able to state a first stability result which is a discrete version of the energy equality \eqref{eq:energy:CH}.
\begin{lemma}\label{lem:energy}
Let the assumptions \ref{item:disc:time}, \ref{item:disc:space}, \ref{item:disc:gamma}, \ref{item:potentials}, and \ref{item:potentialsbounds} hold true and let $\Phi\no\in\R^{\dim\Uhs}$ be given. Then a solution $\Phi\nn\in\R^{\dim\Uhs}$ to \eqref{eq:scheme}, if it exists, satifies
\begin{multline*}
\tfrac12{\Phi\nn}^T\LOmega\Phi\nn +\tfrac12\trkla{\Phi\nn-\Phi\no}^T\LOmega\trkla{\Phi\nn-\Phi\no} +\ones^T\MOmega F\trkla{\Phi\nn}+\tfrac12\kappa\restr{\Phi\nn}{\GammaSymb}^T\LGamma\restr{\Phi\nn}{\GammaSymb}\\
+\tfrac12\kappa\restr{\trkla{\Phi\nn-\Phi\no}}{\GammaSymb}^T\LGamma\restr{\trkla{\Phi\nn-\Phi\no}}{\GammaSymb}+\onesG^T\MGamma G\trkla{\restr{\Phi\nn}{\GammaSymb}}\\
+\antikappa\restr{\trkla{\Phi\nn-\Phi\no}}{\GammaSymb}^T\MGamma\restr{\trkla{\Phi\nn-\Phi\no}}{\GammaSymb}
+ \tau\mobO{\muvec\nn}^T\LOmega\muvec\nn +\tau\mobG{\muGvec\nn}^T\LGamma\muGvec\nn\\
\leq \tfrac12{\Phi\no}^T\LOmega\Phi\no +\ones^T\MOmega F\trkla{\Phi\no}+\tfrac12\kappa\restr{\Phi\no}{\GammaSymb}^T\LGamma\restr{\Phi\no}{\GammaSymb}+\onesG^T\MGamma G\trkla{\restr{\Phi\no}{\GammaSymb}}\,,
\end{multline*}
with $\ones:=\trkla{1,...,1}^T\in\R^{\dim\Uhs}$, $\onesG:=\restr{\ones}{\GammaSymb}$, and $\muvec\nn$ and $\muGvec\nn$ defined in \eqref{eq:def:muInner}, \eqref{eq:mu:step2}, and \eqref{eq:def:muG}.
\end{lemma}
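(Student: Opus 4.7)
The plan is to test the update equations for $\Phi\nn$ with the chemical potentials, test the chemical potential identity with the increment $\Phi\nn-\Phi\no$, and process the resulting identity using a polarization identity for the stiffness terms together with the convex-concave splitting of the potentials.

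First I observe that, by construction of \eqref{eq:scheme} and Lemma \ref{lem:flowmaps}, the triple $\trkla{\Phi\nn,\muvec\nn,\muGvec\nn}$ simultaneously satisfies \eqref{eq:matrixform:Omega:phi}, \eqref{eq:matrixform:Gamma:phi}, and \eqref{eq:matrixform:mu}. Multiplying \eqref{eq:matrixform:Omega:phi} from the left by $\trkla{\muvec\nn}^T$ and \eqref{eq:matrixform:Gamma:phi} by $\trkla{\muGvec\nn}^T$ and adding the two resulting identities yields
\begin{align*}
\trkla{\muvec\nn}^T\MOmega\trkla{\Phi\nn-\Phi\no} + \trkla{\muGvec\nn}^T\MGamma\restr{\trkla{\Phi\nn-\Phi\no}}{\GammaSymb} = -\tau\mobO\trkla{\muvec\nn}^T\LOmega\muvec\nn - \tau\mobG\trkla{\muGvec\nn}^T\LGamma\muGvec\nn.
\end{align*}
On the other hand, multiplying \eqref{eq:matrixform:mu} from the left by $\trkla{\Phi\nn-\Phi\no}^T$, and using the adjointness relation $v^T\extend{A}{\OmegaSymb}=\restr{v}{\GammaSymb}^T A$ together with the symmetry of the mass matrices, reproduces exactly the left-hand side above. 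Equating the two expressions and rearranging leads to an identity of the form
\begin{align*}
\tau\mobO\trkla{\muvec\nn}^T\LOmega\muvec\nn + \tau\mobG\trkla{\muGvec\nn}^T\LGamma\muGvec\nn + \trkla{\Phi\nn-\Phi\no}^T\LOmega\Phi\nn + \kappa\restr{\trkla{\Phi\nn-\Phi\no}}{\GammaSymb}^T\LGamma\restr{\Phi\nn}{\GammaSymb} + \mathcal{F} + \mathcal{G} = 0,
\end{align*}
where $\mathcal{F}:=\trkla{\Phi\nn-\Phi\no}^T\MOmega\rkla{F_+^\prime\trkla{\Phi\nn}+F_-^\prime\trkla{\Phi\no}}$ and $\mathcal{G}$ is the analogous boundary expression built from $G_+^\prime\trkla{\restr{\Phi\nn}{\GammaSymb}}$ and $G_-^\prime\trkla{\restr{\Phi\no}{\GammaSymb}}$.

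To convert this identity into the claimed inequality I apply the polarization identity $a^T A\trkla{a-b}=\tfrac12 a^T A a - \tfrac12 b^T A b + \tfrac12 \trkla{a-b}^T A\trkla{a-b}$, valid for any symmetric matrix $A$, to the two stiffness terms; this produces the three $\LOmega$-contributions and three $\LGamma$-contributions on the left-hand side of the claim. For $\mathcal{F}$ I apply, nodewise and weighted by the diagonal entries of $\MOmega$, the tangent inequality for the convex function $F_+$ at $\Phi\nn$ and the tangent inequality for the concave function $F_-$ at $\Phi\no$; adding the two bounds gives $\mathcal{F}\geq\ones^T\MOmega F\trkla{\Phi\nn}-\ones^T\MOmega F\trkla{\Phi\no}$. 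For $\mathcal{G}$ the same argument applies to $G_+$, while for $G_-$ I invoke the sharpened concavity estimate from \ref{item:potentialsbounds}, which yields the additional positive contribution $\antikappa\restr{\trkla{\Phi\nn-\Phi\no}}{\GammaSymb}^T\MGamma\restr{\trkla{\Phi\nn-\Phi\no}}{\GammaSymb}$. Substituting all these bounds into the identity and rearranging gives precisely the asserted inequality.

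The only delicate point, and the one I would verify carefully, is the bookkeeping around the extension and restriction operators when pairing \eqref{eq:matrixform:mu} with $\Phi\nn-\Phi\no$: the boundary contribution must produce exactly $\trkla{\muGvec\nn}^T\MGamma\restr{\trkla{\Phi\nn-\Phi\no}}{\GammaSymb}$, matching the contribution from testing \eqref{eq:matrixform:Gamma:phi}. This is precisely where Lemma \ref{lem:flowmaps} is essential, since it ensures that $\Phi\nn$ produced by the reduced scheme \eqref{eq:scheme} satisfies \eqref{eq:matrixform:Omega:phi} and \eqref{eq:matrixform:Gamma:phi} with the very same pair $\trkla{\muvec\nn,\muGvec\nn}$; without this compatibility the interior and boundary dissipation terms would not combine into a single energy inequality.
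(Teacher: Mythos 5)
Your proposal is correct and follows essentially the same route as the paper: the paper tests the reduced scheme \eqref{eq:scheme} by $\MOmega\muvec\nn+\extend{\trkla{\MGamma\muGvec\nn}}{\OmegaSymb}$ and invokes Lemma \ref{lem:flowmaps} to split the result into the contributions you obtain by pairing \eqref{eq:matrixform:Omega:phi}, \eqref{eq:matrixform:Gamma:phi} with the potentials and \eqref{eq:matrixform:mu} with the increment, after which both arguments conclude identically via the polarization identity, the convex/concave tangent inequalities, and the sharpened concavity estimate for $G_-$ from \ref{item:potentialsbounds}. Your closing remark on why Lemma \ref{lem:flowmaps} is the essential compatibility ingredient matches the paper's use of it exactly.
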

\begin{proof}
We multiply \eqref{eq:scheme} by $\rkla{\MOmega\muvec\nn+\begin{psmallmatrix}\MGamma\muGvec\nn\\\bs{0}  \end{psmallmatrix}}$ and use Lemma \ref{lem:flowmaps} to obtain
\begin{align}
\begin{split}
0=&\rkla{\Phi\nn-\Phi\no}^T\MOmega\muvec\nn +\restr{\rkla{\Phi\nn-\Phi\no}}{\GammaSymb}^T\MGamma\muGvec\nn \\
&+ \tau\mobO\trkla{\muvec\nn}^T\LOmega\muvec\nn +\tau\mobG\trkla{\muGvec\nn}^T\LGamma\muGvec\nn\\
=:&\,I+II+III+IV\,.
\end{split}
\end{align}
As $III$ and $IV$ provide the dissipative parts of the desired estimate, we have show to that $I$ and $II$ yield the time difference of the energy.
Recalling \eqref{eq:def:muG}, we compute
\begin{align}
II=-\restr{\rkla{\Phi\nn-\Phi\no}}{\GammaSymb}^T\restr{\MOmega}{\GammaSymb\times\GammaSymb}\restr{\muvec\nn}{\GammaSymb} + \restr{\rkla{\Phi\nn-\Phi\no}}{\GammaSymb}^T\RHSG\trkla{\Phi\nn}\,.
\end{align}
Consequently, we obtain from \eqref{eq:mu:step2}
\begin{align}
\begin{split}
I+&II = \restr{\trkla{\Phi\nn-\Phi\no}}{\InnerSymb}^T\RHSO\trkla{\Phi\nn} + \restr{\trkla{\Phi\nn-\Phi\no}}{\GammaSymb}^T\RHSG\trkla{\Phi\nn}\\
=& \rkla{\Phi\nn-\Phi\no}^T\LOmega\Phi\nn+\trkla{\Phi\nn-\Phi\no}^T\MOmega\trkla{F_+^\prime\trkla{\Phi\nn}+F_-^\prime\trkla{\Phi\no}}\\
&+\kappa\restr{\trkla{\Phi\nn-\Phi\no}}{\GammaSymb}^T\LGamma\restr{\Phi\nn}{\GammaSymb} +\restr{\trkla{\Phi\nn-\Phi\no}}{\GammaSymb}^T\MGamma\trkla{G_+^\prime\trkla{\restr{\Phi\nn}{\GammaSymb}}+G^\prime_-\trkla{\restr{\Phi\no}{\GammaSymb}}}\,.
\end{split}
\end{align}
As $\MOmega$ and $\MGamma$ are diagonal matrices, we may combine $\trkla{F^\prime_+\trkla{\Phi\nn}+F^\prime_-\trkla{\Phi\no}}$ and  $\trkla{\Phi\nn-\Phi\no}$, and $\trkla{G^\prime_+\trkla{\restr{\Phi\nn}{\GammaSymb}}+G^\prime_-\trkla{\restr{\Phi\no}{\GammaSymb}}}$ and $\restr{\trkla{\Phi\nn-\Phi\no}}{\GammaSymb}$ componentwise.
In combination with $s_1\trkla{s_1-s_2} = \tfrac12 s_1^2+\tfrac12\trkla{s_1-s_2}^2-\tfrac12s_2^2$, this provides the result.
\end{proof}
Using the a priori estimate from Lemma \ref{lem:energy}, we are able to prove the existence of discrete solutions.
\begin{lemma}\label{lem:existence}
Let the assumptions \ref{item:disc:time}, \ref{item:disc:space}, \ref{item:disc:gamma}, and \ref{item:potentials} hold true and let $\Phi\no\in\R^{\dim\Uhs}$ be given.
Then, there exists at least one vector $\Phi\nn\in\R^{\dim\Uhs}$ solving \eqref{eq:scheme}.
\end{lemma}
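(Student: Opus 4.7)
The plan is a Brouwer degree argument built on Lemma~\ref{lem:energy}. Introduce the continuous map $\mathcal{F}\colon\R^{\dim\Uhs}\to\R^{\dim\Uhs}$ obtained by bringing $\Phi\no$ to the left-hand side of \eqref{eq:scheme} and treating the unknown as a generic $\Phi$, so that $\mathcal{F}(\Phi\nn)=0$ is equivalent to $\Phi\nn$ solving \eqref{eq:scheme}. The chemical potentials $\muvec(\Phi)$ and $\muGvec(\Phi)$ depend continuously on $\Phi$ through \eqref{eq:def:muInner}, \eqref{eq:mu:step2}, and \eqref{eq:def:muG}, thanks to the invertibility of the symmetric positive definite matrix established in the lemma preceding \eqref{eq:scheme}; combined with the continuity of $F'_\pm$ and $G'_\pm$, this yields continuity of $\mathcal{F}$.

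To produce a zero, I would embed $\mathcal{F}$ into the homotopy $\mathcal{F}_\alpha$, $\alpha\in[0,1]$, obtained by replacing $\tau$ by $\alpha\tau$ in the $\tau$-dependent summand of \eqref{eq:scheme} (the definitions of $\muvec(\Phi),\muGvec(\Phi)$ stay untouched). Then $\mathcal{F}_0(\Phi)=\Phi-\Phi\no$ has the unique zero $\Phi\no$, while $\mathcal{F}_1=\mathcal{F}$. For any zero $\Phi_\alpha$ of $\mathcal{F}_\alpha$, I would repeat the testing from the proof of Lemma~\ref{lem:energy} with $\MOmega\muvec(\Phi_\alpha)+\extend{\MGamma\muGvec(\Phi_\alpha)}{\OmegaSymb}$: since Lemma~\ref{lem:flowmaps} and the convex--concave splitting of $F, G$ are independent of $\alpha$, that argument carries over verbatim, the only change being that the two non-negative dissipation terms pick up a factor $\alpha$. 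Because the resulting right-hand side depends only on $\Phi\no$, this gives an $\alpha$-uniform bound on the discrete energy of $\Phi_\alpha$.

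Mass conservation also survives the homotopy (test $\mathcal{F}_\alpha(\Phi_\alpha)=0$ by $\ones^T\MOmega$ and use $\LOmega\ones=\mathbf{0}$), and, combined with the coercivity of $F$ and $G$ coming from the fourth-degree polynomial structure in \ref{item:potentialsbounds}, the energy bound translates into a uniform Euclidean bound $\tnorm{\Phi_\alpha}\leq R$ for all $\alpha\in[0,1]$. Choosing such $R$, the Brouwer degree $\deg(\mathcal{F}_\alpha, B_R, 0)$ is well-defined and, by homotopy invariance, constant in $\alpha$; since $\mathcal{F}_0$ is the affine map $\Phi\mapsto\Phi-\Phi\no$ whose degree at $0$ equals $+1$, we obtain $\deg(\mathcal{F}_1, B_R, 0)=1\neq 0$, hence existence of a zero of $\mathcal{F}=\mathcal{F}_1$ in $B_R$.

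The main obstacle I expect is the $\alpha$-uniformity of the a priori bound. The scaling $\tau\mapsto\alpha\tau$ is tailored precisely so that both the testing from Lemma~\ref{lem:energy} and mass conservation go through for every $\alpha$, and so that the structural identity of Lemma~\ref{lem:flowmaps} still converts the $\mobO\muvec^T\LOmega\muvec$-type contribution into the required $\mobG\muGvec^T\LGamma\muGvec$ dissipation. The coercivity assumption in \ref{item:potentialsbounds} is indispensable to upgrade the resulting energy bound into a bound on $\Phi_\alpha$ itself; once that uniform bound is in hand, the degree-theoretic conclusion is routine.
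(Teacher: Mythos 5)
Your degree-theoretic homotopy argument is correct, but it takes a genuinely different route from the paper. The paper argues by contradiction: assuming \eqref{eq:scheme} has no root in any ball $B_R$ of mean-value-zero vectors, it applies Brouwer's fixed point theorem to the normalized map $\mathcal{A}\trkla{\Phi}=-R\,\mathcal{H}\trkla{\Phi}/\nnorm{\mathcal{H}\trkla{\Phi}}$, obtains a fixed point $\Phi^*$, and derives the contradiction $0<\rkla{\Phi^*,\Psi}<0$ by testing against $\Psi$, the sum of the two chemical potentials deprived of their mean values. Both proofs draw their quantitative content from the same source --- the energy-type testing of Lemma \ref{lem:energy} together with mass conservation (Corollary \ref{cor:meanvalue}) --- so neither is more general. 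Your version replaces the paper's somewhat delicate two-sided sign estimate for $\rkla{\Phi^*,\Psi}$ (which requires splitting off $F_-^\prime\trkla{\Phi\no}-F_-^\prime\trkla{\bs{0}}$ and absorbing boundary terms with an $h$-dependent constant) by homotopy invariance of the Brouwer degree along $\tau\mapsto\alpha\tau$; this is arguably cleaner, since $\mathcal{F}_0\trkla{\Phi}=\Phi-\Phi\no$ trivially has degree one and the $\alpha$-uniform a priori bound is immediate because the dissipation terms only acquire a non-negative factor $\alpha$.

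One correction is needed in the step producing the uniform Euclidean bound on $\Phi_\alpha$: you attribute it to coercivity of $F$ and $G$ via the polynomial structure in \ref{item:potentialsbounds}, but the lemma only assumes \ref{item:potentials}, under which $F$ and $G$ are merely bounded from below and need not be coercive. The correct mechanism --- the one the paper also uses --- is that, after discarding the potential terms by their lower bound, the energy estimate controls $\Phi_\alpha^T\LOmega\Phi_\alpha$; since $\ker\LOmega$ is spanned by $\ones$ while $\ones^T\MOmega\Phi_\alpha=\ones^T\MOmega\Phi\no$ is fixed by mass conservation, the seminorm $\sqrt{\Phi_\alpha^T\LOmega\Phi_\alpha}$ together with the prescribed mean value is equivalent to the Euclidean norm on $\R^{\dim\Uhs}$. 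With that substitution your argument closes under exactly the stated hypotheses.
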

\begin{proof}
We will prove the existence of discrete solutions by contradiction.
Let $\nnorm{.}$ denote the discrete $L^2$-norm which is derived from the inner product $\trkla{A,B}:=A^T\MOmega B$.
According to Corollary \ref{cor:meanvalue}, the mean-value of the phase-field is conserved in $\Omega$. 
This allows us to assume w.l.o.g. that $\ones^T\MOmega\Phi\nn=\ones^T\MOmega\Phi\no=0$.
Therefore, $\sqrt{{\Phi\nn}^T\LOmega\Phi\nn}$ is also a norm of $\Phi\nn$. 
Under the assumption that \eqref{eq:scheme} has no solution in 
\begin{align}
B_R:=\tgkla{ A\in\R^{\dim\Uhs}\,:\,\ones^T\MOmega A=0\text{~and~}\nnorm{A}\leq R}
\end{align}
for any $R>0$, the function $\mathcal{H}$ defined via
\begin{multline}
\Phi-\Phi\no +\tau\mobO\MOmega^{-1}\LOmega \begin{pmatrix}
\rkla{\mobO \restr{\LOmega}{\GammaSymb\times\GammaSymb} +\mobG\restr{\MOmega}{\GammaSymb\times\GammaSymb}\MGamma^{-1}\LGamma\MGamma^{-1} \restr{\MOmega}{\GammaSymb\times\GammaSymb}}^{-1} & \mb{0}\\ \mb{0}&\mathds{1}
\end{pmatrix}\\
\cdot\begin{pmatrix}
-\mobO\restr{\LOmega}{\GammaSymb\times\InnerSymb}\restr{\MOmega}{\InnerSymb\times\InnerSymb}^{-1}\RHSO\trkla{\Phi}+\mobG\restr{\MOmega}{\GammaSymb\times\GammaSymb}\MGamma^{-1}\LGamma\MGamma^{-1} \RHSG\trkla{\Phi}\\
\restr{\MOmega}{\InnerSymb\times\InnerSymb}^{-1}\RHSO\trkla{\Phi}
\end{pmatrix}=:\mathcal{H}\trkla{\Phi}
\end{multline}
has no root and is continuous on $B_R$.
This allows us to define a function $\mathcal{A}\,:B_R\rightarrow \partial B_R\subset B_R$ as
\begin{align}
\mathcal{A}\trkla{\Phi}:=-R\frac{\mathcal{H}\trkla{\Phi}}{\nnorm{\mathcal{H}\trkla{\Phi}}}\,.
\end{align} 
As $\mathcal{A}$ is continuous and maps a closed set onto itself, Brouwer's fixed point theorem provides the existence of at least one fixed point $\Phi^*$.
In the following, we will show 
\begin{align}\label{eq:contradiction}
0<\rkla{\Phi^*,\Psi}<0
\end{align}
for a suitable $\Psi\in B_R$ and $R$ large enough.
This contradiction shows that the initial assumption of \eqref{eq:scheme} not having solutions in $B_R$ is wrong.
To prove the contradiction \eqref{eq:contradiction}, we choose $\Psi=\tilde{\Psi}_1+\tilde{\Psi}_2-\ones^T\MOmega\trkla{\tilde{\Psi}_1+\tilde{\Psi}_2}\trkla{\ones^T\MOmega\ones}^{-1}\ones$ with
\begin{multline}
\tilde{\Psi}_1:=
\begin{pmatrix}
\rkla{\mobO \restr{\LOmega}{\GammaSymb\times\GammaSymb} +\mobG\restr{\MOmega}{\GammaSymb\times\GammaSymb}\MGamma^{-1}\LGamma\MGamma^{-1} \restr{\MOmega}{\GammaSymb\times\GammaSymb}}^{-1} & \mb{0}\\ \mb{0}&\mathds{1}
\end{pmatrix}\\
\cdot\begin{pmatrix}
-\mobO\restr{\LOmega}{\GammaSymb\times\InnerSymb}\restr{\MOmega}{\InnerSymb\times\InnerSymb}^{-1}\RHSO\trkla{\Phi^*}+\mobG\restr{\MOmega}{\GammaSymb\times\GammaSymb}\MGamma^{-1}\LGamma\MGamma^{-1} \RHSG\trkla{\Phi^*}\\
\restr{\MOmega}{\InnerSymb\times\InnerSymb}^{-1}\RHSO\trkla{\Phi^*}
\end{pmatrix}
\end{multline}
and 
\begin{align}
\tilde{\Psi}_2:=\begin{pmatrix}
 \restr{\MOmega^{-1}}{\GammaSymb\times\GammaSymb}\MGamma \rkla{-\MGamma^{-1} \restr{\MOmega}{\GammaSymb\times\GammaSymb} \restr{\tilde{\Psi}_1}{\Gamma} +\MGamma^{-1} \RHSG\trkla{\Phi^*}}\\
 \bs{0}
\end{pmatrix}\,,
\end{align}
i.e. the test vector is the sum of the chemical potentials deprived of their mean values.
The computations from the proof of Lemma \ref{lem:energy} provide
\begin{align}
\begin{split}
\rkla{\mathcal{H}\trkla{\Phi^*},\Psi}\geq& \tfrac12{\Phi^*}^T\LOmega{\Phi^*} -C\,,
\end{split}
\end{align}
where the constant $C$ depends on $\Phi\no$ and the lower bound from \ref{item:potentials}, but not on the fixed point $\Phi^*$ or $R$.
Since all norms on finite dimensional spaces are equivalent, there exists $c>0$ such that $\tfrac12{\Phi^*}^T\LOmega\Phi^*\geq c{\Phi^*}^T\MOmega\Phi^*$ and we obtain
\begin{align}
\rkla{\mathcal{H}\trkla{\Phi^*},\Psi}\geq c\nnorm{\Phi^*}^2-C=c R^2-C>0
\end{align}
for $R$ large enough.
This provides the second inequality in \eqref{eq:contradiction}.
In order to establish the first inequality we again use the computations from the proof of Lemma \ref{lem:energy} to show
\begin{align}
\begin{split}
\rkla{\Phi^*,\Psi} =&{\Phi^*}^T\LOmega\Phi^*+{\Phi^*}^T\MOmega\rkla{F_+^\prime\trkla{\Phi^*}+F_-^\prime\trkla{\bs{0}}} \\
&+{\Phi^*}^T\MOmega\rkla{F^\prime_-\trkla{\Phi\no}-F_-^\prime\trkla{\bs{0}}}+\kappa\restr{\Phi^*}{\GammaSymb}^T\LGamma\restr{\Phi^*}{\GammaSymb}\\
&+\restr{\Phi^*}{\GammaSymb}^T\MGamma\rkla{G_+^\prime\trkla{\restr{\Phi^*}{\GammaSymb}}+G^\prime_-\trkla{\bs{0}}}+\restr{\Phi^*}{\GammaSymb}^T\MGamma\rkla{G^\prime_-\trkla{\restr{\Phi\no}{\GammaSymb}}-G^\prime_-\trkla{\bs{0}}}\\
\geq& c\nnorm{\Phi^*}^2+\ones^T\MOmega\rkla{F\trkla{\Phi^*}-F\trkla{\bs{0}}}-\varepsilon\nnorm{\Phi^*}^2-C_\varepsilon\nnorm{F_-^\prime\trkla{\Phi\no}-F_-^\prime\trkla{\bs{0}}}^2\\
&+\kappa\restr{\Phi^*}{\GammaSymb}^T\LGamma\restr{\Phi^*}{\GammaSymb}+\onesG^T\MGamma\rkla{G\trkla{\restr{\Phi^*}{\GammaSymb}}-G\trkla{\bs{0}}} -\tilde{\varepsilon}\restr{\Phi^*}{\GammaSymb}^T\MGamma\restr{\Phi^*}{\GammaSymb}\\
&-C_{\tilde{\varepsilon}}\rkla{G_-^\prime\trkla{\restr{\Phi\no}{\GammaSymb}}-G_-^\prime\trkla{\bs{0}}}^T\MGamma\rkla{G_-^\prime\trkla{\restr{\Phi\no}{\GammaSymb}}-G_-^\prime\trkla{\bs{0}}}\,
\end{split}
\end{align}
with $0<\varepsilon,\tilde{\varepsilon}<\!\!<1$.
For every fixed $h$, there is a constant $C\h>0$ such that $\restr{\Phi^*}{\GammaSymb}^T\MGamma\restr{\Phi^*}{\GammaSymb}\leq C\h {\Phi^*}^T\MOmega\Phi^*$. Hence, we have
\begin{align*}
\rkla{\Phi^*,\Psi}\geq \trkla{c-\varepsilon -C\h\tilde{\varepsilon}}\nnorm{\Phi^*}^2 -C_{\varepsilon,\tilde{\varepsilon}}=\trkla{c-\varepsilon -C\h\tilde{\varepsilon}}R^2 -C_{\varepsilon,\tilde{\varepsilon}}
\end{align*}
with $C_{\varepsilon,\tilde{\varepsilon}}>0$ independent of $\Phi^*$ and $R$.
Choosing $\varepsilon$ and $\tilde{\varepsilon}$ small enough provides $\trkla{c-\varepsilon-C\h\tilde{\varepsilon}}>0$.
Hence, we obtain the first inequality in \eqref{eq:contradiction} for $R$ large enough, which completes the proof.
\end{proof}
\begin{remark}
The existence result in Lemma \ref{lem:energy} implies no constraints on the time increment $\tau$. 
Therefore, we have the existence of discrete solutions for arbitrary time increments.
\end{remark}

\section{Convergence of the discrete scheme}\label{sec:convergence}
In this section, we show that the discrete solutions established in the last section converge towards suitable weak solutions of \eqref{eq:model}.
This requires some assumptions on the initial data.
In particular, we will assume that
\begin{itemize}
\labitem{(I)}{item:initial} the initial data $\phi_0\in\Xkappa$ and its projection $\phi\h^0$ onto $\Uhs$ satisfies
\begin{align*}
\iOmega\abs{\nabla\phi\h^0}^2+\iOmega\Ih{F\trkla{\phi\h^0}} +\kappa\iGamma\abs{\nablaG\phi\h^0}^2 +\iGamma\IhG{G\trkla{\phi\h^0}}\leq C\,
\end{align*}
with some $C>0$ independent of $h$ and $\tau$.
\end{itemize}
Furthermore, the regularity results provided in this section require additional assumptions on $h$ and $\tau$.
In particular, we will need 
\begin{itemize}
\labitem{(C)}{item:htau} that $\tfrac{h^4}{\tau}\searrow 0$ for $\trkla{h,\tau}\searrow0$ when $\kappa>0$ and that $\tfrac{h^2}{\tau}\searrow 0$ for $\trkla{h,\tau}\searrow0$ when $\kappa=0$.
\end{itemize}
Assumption \ref{item:initial} allows us to state our first regularity result.
\begin{corollary}\label{cor:stability}
Let the assumptions \ref{item:disc:time}, \ref{item:disc:space}, \ref{item:disc:gamma}, \ref{item:potentials}, \ref{item:potentialsbounds}, and \ref{item:initial} hold true and let $h>0$ be small enough.
Then a solution $\trkla{\phi\h\nn,\muOh\nn,\muGh\nn}_{n=1,...,N}$ to \eqref{eq:feform} satisfies
\begin{multline*}
\max_{n=0,...,N}\norm{\phi\h\nn}_{H^1\trkla{\Omega}}^2 +\max_{n=0,...,N}\iOmega\Ih{F\trkla{\phi\h\nn}} +\kappa \max_{n=0,...,N}\norm{\phi\h\nn}_{H^1\trkla{\Gamma}}^2\\
+\max_{n=0,...,N}\iGamma\IhG{G\trkla{\phi\h\nn}}
+\sum_{n=1}^N\iOmega\abs{\nabla\phi\h\nn-\nabla\phi\h\no}^2 +\kappa\sum_{n=1}^N\iGamma\abs{\nablaG\phi\h\nn-\nablaG\phi\h\no}^2\\
+\antikappa\sum_{n=1}^N\iGamma{\abs{\phi\h\nn-\phi\h\no}^2}
+\tau\mobO\sum_{n=1}^N\norm{\muOh\nn}^2_{H^1\trkla{\Omega}} +\tau\mobG\sum_{n=1}^N\norm{\muGh\nn}^2_{H^1\trkla{\Gamma}}\leq C\,,
\end{multline*}
with a constant $C>0$ independent of $h$ and $\tau$.
\end{corollary}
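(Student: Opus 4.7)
The estimate is essentially obtained by summing the single-step energy identity of Lemma \ref{lem:energy} telescopically in $n$. Writing the lemma schematically as $\mathcal{E}(\Phi\nn) + D\nn \le \mathcal{E}(\Phi\no)$, where $D\nn$ collects the nonnegative dissipative and numerical-diffusion terms, a straightforward induction yields
\begin{align*}
\mathcal{E}(\Phi\nn) + \sum_{k=1}^{n} D^k \le \mathcal{E}(\Phi^0) \qquad \forall\, n=1,\dots,N,
\end{align*}
which simultaneously produces the $\max_n$ on the left and the telescoped sum on the right. Assumption \ref{item:initial} bounds $\mathcal{E}(\Phi^0)$ uniformly in $h$ and $\tau$, so the entire right-hand side is controlled by a constant $C$ independent of the discretization parameters.

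The remaining work is to translate the matrix quantities of Lemma \ref{lem:energy} back into the norms appearing in the claim. By \eqref{eq:def:matrix}, $\tfrac12{\Phi\nn}^T\LOmega\Phi\nn=\tfrac12\iOmega|\nabla\phi\h\nn|^2$, with analogous identifications for $\LGamma$, $\MOmega$, and $\MGamma$ using the lumped-mass interpretation. This immediately yields the $H^1(\Omega)$-seminorm bound on $\phi\h\nn$, the potential integrals $\iOmega\Ih{F(\phi\h\nn)}$ and $\iGamma\IhG{G(\phi\h\nn)}$, the time-increment squared terms, and (for $\kappa>0$) the $H^1(\Gamma)$-seminorm bound. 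To upgrade the seminorms to full $H^1$-norms I combine Corollary \ref{cor:meanvalue}, which conserves the lumped means $\ones^T\MOmega\Phi\nn$ and $\onesG^T\MGamma\restr{\Phi\nn}{\GammaSymb}$ in time, with a discrete Poincaré inequality on $\Uhs$ and $\UhG$; the initial means are bounded by \ref{item:initial} via the coercivity of $F$ and $G$.

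For the dissipative chemical-potential terms, Lemma \ref{lem:energy} only supplies the gradient squared norms $\tau{\muvec\nn}^T\LOmega\muvec\nn$ and $\tau{\muGvec\nn}^T\LGamma\muGvec\nn$; to obtain the full $H^1$-bounds one must additionally control the discrete means of $\muOh\nn$ and $\muGh\nn$. Testing \eqref{eq:feform:mu} with $\theta\h\equiv 1$ gives one scalar identity relating the sum of these lumped means to $F^\prime_\pm(\phi\h\nn), F^\prime_\pm(\phi\h\no), G^\prime_\pm(\phi\h\nn), G^\prime_\pm(\phi\h\no)$; by the growth bound in \ref{item:potentialsbounds} together with the Sobolev embedding $H^1\emb L^6$ and the trace embedding $H^1(\Omega)\emb L^4(\Gamma)$, these quantities are already controlled by the $H^1$-bounds on $\phi\h\nn$ established in the previous paragraph. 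The main technical point is to split this joint mean into separate bounds on each individual mean, which I expect to carry out by exploiting the explicit coupling \eqref{eq:def:muG} expressing $\muGvec\nn$ in terms of the boundary trace of $\muvec\nn$ plus quantities already controlled by $\RHSG\trkla{\Phi\nn}$: once the trace of $\muOh\nn$ is controlled via a discrete trace inequality applied to the already bounded $\muOh\nn$ modulo its mean, the mean of $\muGh\nn$ is pinned down, and the joint identity from the constant test function then pins down the mean of $\muOh\nn$. Applying Poincaré with these controlled means, multiplying by $\tau$, and summing the weighted $H^1$-bounds over $n=1,\dots,N$ produces the required estimates on $\tau\sum_n \norm{\muOh\nn}_{H^1(\Omega)}^2$ and $\tau\sum_n \norm{\muGh\nn}_{H^1(\Gamma)}^2$, completing the corollary.
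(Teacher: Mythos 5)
Your telescoping of Lemma \ref{lem:energy}, the identification of the matrix quantities with the stated integrals, and the use of Corollary \ref{cor:meanvalue} plus a Poincar\'e inequality to upgrade the $H^1$-seminorm bounds on $\phi\h\nn$ all match the paper. The gap is in your treatment of the means of the chemical potentials. You correctly note that testing \eqref{eq:feform:mu} with $\theta\h\equiv 1$ controls only the \emph{sum} $\iOmega\Ih{\muOh\nn}+\iGamma\IhG{\muGh\nn}$, but your plan for splitting it does not go through: to extract the mean of $\muGh\nn$ from \eqref{eq:def:muG} you must pair that identity with $\onesG^T\MGamma$, and the resulting term $\onesG^T\RHSG\trkla{\Phi\nn}$ contains $\onesG^T\restr{\LOmega}{\GammaSymb\times\OmegaSymb}\Phi\nn=\iOmega\nabla\phi\h\nn\cdot\nabla\theta_\Gamma$, where $\theta_\Gamma\in\Uhs$ is the function equal to $1$ at boundary nodes and $0$ at interior nodes. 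This is the discrete surrogate of the normal-derivative term $\delta\sigma\nabla\phi\cdot\bs{n}$ in \eqref{eq:model:Gamma:mu}; since $\norm{\nabla\theta_\Gamma}_{L^2\trkla{\Omega}}\sim h^{-1/2}$, the energy bounds only give $\abs{\iOmega\nabla\phi\h\nn\cdot\nabla\theta_\Gamma}\leq Ch^{-1/2}$. Hence $\RHSG\trkla{\Phi\nn}$ is \emph{not} ``already controlled'' in this pairing, and your chain of deductions only bounds the individual means by $Ch^{-1/2}$, which is not uniform in $h$. The discrete trace inequality applied to $\muOh\nn$ modulo its mean does not rescue this, because the uncontrolled contribution sits inside $\RHSG$, not in the trace of $\muvec\nn$.

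The paper circumvents this by first testing \eqref{eq:feform:mu} with $\Ih{\eta}$ for a fixed $\eta\in C_0^\infty\trkla{\Omega;\tekla{0,1}}$: the compact support eliminates every boundary contribution, $\iOmega\nabla\phi\h\nn\cdot\nabla\Ih{\eta}$ is bounded uniformly because $\eta$ is fixed, and the potential terms are bounded via \ref{item:potentialsbounds} and the already established bounds on $\phi\h\nn$. This yields $\abs{\iOmega\Ih{\muOh\nn\eta}}\leq\tilde{C}\trkla{\eta}$, a bound on a fixed weighted mean of $\muOh\nn$ \emph{alone}, which is exactly what the generalized Poincar\'e inequality of Lemma \ref{lem:poincare} needs to produce $\norm{\muOh\nn}_{L^2\trkla{\Omega}}\leq C\trkla{1+\norm{\nabla\muOh\nn}_{L^2\trkla{\Omega}}}$. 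Only afterwards is the constant test function used: since $\iOmega\Ih{\muOh\nn}$ is then controlled, the identity pins down $\iGamma\IhG{\muGh\nn}$ and the ordinary Poincar\'e inequality on $\Gamma$ finishes the argument. You need to replace your splitting step by an argument of this type.
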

\begin{proof}
After summing the result of Lemma \ref{lem:energy} over all time steps and recalling Corollary \ref{cor:meanvalue} and \ref{item:initial}, it remains to show that we have indeed control over the complete $H^1$ norm of $\muOh\nn$ and $\muGh\nn$.
To establish this result, we will follow the lines of \cite{GarckeKnopf20}. 
Testing \eqref{eq:feform:mu} by $\Ih{\eta}$ with $\eta\in C_0^\infty\trkla{\Omega;\tekla{0,1}}$, which is not identically zero, we obtain
\begin{align}
\iOmega\Ih{\muOh\nn\eta}=\iOmega\nabla\phi\h\nn\cdot\nabla\Ih{\eta} + \iOmega\Ih{\trkla{F_+^\prime\trkla{\phi\h\nn}+F_-^\prime\trkla{\phi\h\no}}\eta}\,.
\end{align}
From \ref{item:potentialsbounds} and \ref{item:initial}, we obtain
\begin{multline}\label{eq:bound:Fprime}
\abs{\iOmega\Ih{\trkla{F_+^\prime\trkla{\phi\h\nn}+F_-^\prime\trkla{\phi\h\no}}\eta}}\\
\leq C\norm{\phi\h\nn}_{L^3\trkla{\Omega}}^3 +C\norm{\phi\h\nn}_{L^1\trkla{\Omega}} +C\norm{\phi\h\no}_{L^3\trkla{\Omega}}^3+C\norm{\phi\h\no}_{L^1\trkla{\Omega}} +C\leq C
\end{multline}
Hence, there exists a constant $\tilde{C}\trkla{\eta}$ independent of $h$ and $\tau$ such that $\abs{\iOmega\Ih{\muOh\nn\eta}}\leq \tilde{C}\trkla{\eta}$.
We now define 
\begin{align}
\mathcal{M}_\eta:=\gkla{v\in H^1\trkla{\Omega}\,:\,\iOmega\Ih{v\eta}\leq \tilde{C}\trkla{\eta}}\,.
\end{align}
From standard error estimates for the interpolation operator $\Ihop$ (cf. \cite{BrennerScott}), we derive the existence of $c\trkla{\eta}>0$ such that $\iOmega\Ih{\eta}\geq c\trkla{\eta}$ for $h$ small enough.
Therefore, we may use the generalized Poincar\'e inequality (cf. \cite{Alt2016}), which we cite in the appendix as Lemma \ref{lem:poincare}, with $u_0\equiv0$ and $C_0:=\tilde{C}\trkla{\eta}/c\trkla{\eta}$ to obtain
\begin{align}
\norm{\muOh\nn}_{L^2\trkla{\Omega}}\leq C\trkla{1+\norm{\nabla\muOh\nn}_{L^2\trkla{\Omega}}}&&\text{for~all~}n\in\tgkla{1,...,N}\,.
\end{align}
To obtain the $L^2$-bound for $\muGh\nn$, we test \eqref{eq:feform:mu} by $\theta\h\equiv1$ and obtain
\begin{align*}
\abs{\iGamma\muGh\nn}\!\leq\! \abs{\iOmega\muOh\nn}\!+\!\abs{\iOmega\Ih{F_+^\prime\trkla{\phi\h\nn}+F_-^\prime\trkla{\phi\h\no}}}\!+\!\abs{\iGamma\IhG{G_+^\prime\trkla{\phi\h\nn}+G_-^\prime\trkla{\phi\h\no}}}\,.
\end{align*}
Considerations similar to \eqref{eq:bound:Fprime} show that the last term on the right-hand side is also bounded by a constant independent of $h$ and $\tau$.
Therefore, we may use Poincar\'e's inequality to complete the proof.
\end{proof}
In a second step, we derive uniform bounds for the time difference quotient $\dtau \phi\h\nn:=\tau^{-1}\trkla{\phi\h\nn-\phi\h\no}$ of the phase-field parameter on $\Omega$ and $\Gamma$.
\begin{lemma}\label{lem:timeder}
Let the assumptions \ref{item:disc:time}, \ref{item:disc:space}, \ref{item:disc:gamma}, \ref{item:potentials}, \ref{item:potentialsbounds},  \ref{item:initial}, and \ref{item:htau} hold true.
Furthermore, let $h>0$ be small enough such that Corollary \ref{cor:stability} holds true.
Then a solution $\trkla{\phi\h\nn}_{n=1,...,N}$ to \eqref{eq:feform} satisfies
\begin{align}\label{eq:timeder}
\tau\sum_{n=1}^N\norm{\dtau\phi\h\nn}_{\trkla{H^1\trkla{\Omega}}^\prime}^2\leq C\,,&&\text{and}&&
\tau\sum_{n=1}^N\norm{\dtau\phi\h\nn}_{\trkla{H^1\trkla{\Gamma}}^\prime}^2\leq C\,,
\end{align}
with $C>0$ independent of $h$ and $\tau$.
\end{lemma}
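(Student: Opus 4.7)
The plan is to compute each dual norm by duality. For the $(H^1(\Omega))'$ bound, I would fix $\eta\in H^1(\Omega)$ with $\norm{\eta}_{H^1(\Omega)}\leq 1$ and introduce its Clément-type quasi-interpolant $\theta\h := \Clem{\eta}\in\Uhs$, which enjoys $\norm{\eta-\theta\h}_{L^2(\Omega)}\leq Ch$ and $\norm{\theta\h}_{H^1(\Omega)}\leq C$ thanks to the quasi-uniformity of \ref{item:disc:space}. I then split
\begin{align*}
\iOmega\dtau\phi\h\nn\,\eta = \iOmega\dtau\phi\h\nn\,(\eta-\theta\h) + \iOmega(\ids-\Ihop)\{\dtau\phi\h\nn\,\theta\h\} + \iOmega\Ih{\dtau\phi\h\nn\,\theta\h},
\end{align*}
and observe that the last integral coincides with $-\mobO\iOmega\nabla\muOh\nn\cdot\nabla\theta\h$ in view of \eqref{eq:feform:Omega:phi}. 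Controlling the first term by Cauchy--Schwarz and the approximation estimate, the middle one by Lemma~\ref{lem:ihfe} with $p=1$ and $q=q^*=2$, and the third by Cauchy--Schwarz, taking the supremum over $\eta$ yields
\begin{align*}
\norm{\dtau\phi\h\nn}_{(H^1(\Omega))'} \leq Ch\norm{\dtau\phi\h\nn}_{L^2(\Omega)} + Ch^2\norm{\nabla\dtau\phi\h\nn}_{L^2(\Omega)} + C\mobO\norm{\nabla\muOh\nn}_{L^2(\Omega)}.
\end{align*}

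After squaring, multiplying by $\tau$ and summing in $n$, the contribution from $\muOh\nn$ is absorbed by the dissipation bound of Corollary~\ref{cor:stability}, while the third term converts into $Ch^4\tau^{-1}\sum_n\norm{\nabla(\phi\h\nn-\phi\h\no)}_{L^2(\Omega)}^2\leq Ch^4/\tau$ which is controlled by \ref{item:htau}. The delicate piece is $Ch^2\tau^{-1}\sum_n\norm{\phi\h\nn-\phi\h\no}_{L^2(\Omega)}^2$, and I would sharpen this by testing \eqref{eq:feform:Omega:phi} with $\theta\h = \phi\h\nn - \phi\h\no$: using equivalence of the lumped and standard $L^2$-norms on $\Uhs$, Cauchy--Schwarz yields
\begin{align*}
\norm{\phi\h\nn-\phi\h\no}_{L^2(\Omega)}^2 \leq C\tau\norm{\nabla\muOh\nn}_{L^2(\Omega)}\norm{\nabla(\phi\h\nn-\phi\h\no)}_{L^2(\Omega)}.
\end{align*}
Summing in $n$, Cauchy--Schwarz in time combined with Corollary~\ref{cor:stability} then produces $\sum_n\norm{\phi\h\nn-\phi\h\no}_{L^2(\Omega)}^2\leq C\tau^{1/2}$, so the delicate piece is of order $h^2\tau^{-1/2}=(h^4/\tau)^{1/2}$ and stays bounded under \ref{item:htau}.

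The $(H^1(\Gamma))'$ estimate would be obtained by the analogous argument on $\Gamma$ with \eqref{eq:feform:Gamma:phi}, the Clément interpolant in $\UhG$, and the boundary version of Lemma~\ref{lem:ihfe}. The dichotomy in \ref{item:htau} surfaces precisely here: for $\kappa>0$, Corollary~\ref{cor:stability} supplies $\sum_n\norm{\nablaG(\phi\h\nn-\phi\h\no)}_{L^2(\Gamma)}^2\leq C$, so the same testing trick on $\Gamma$ gives $\sum_n\norm{\phi\h\nn-\phi\h\no}_{L^2(\Gamma)}^2\leq C\tau^{1/2}$ and $h^4/\tau\searrow 0$ suffices; for $\kappa=0$, no tangential-gradient bound is available, and I would instead invoke the direct $\antikappa$-contribution from Lemma~\ref{lem:energy} (with $\antikappa>0$ by \ref{item:potentialsbounds}), which merely yields $\sum_n\norm{\phi\h\nn-\phi\h\no}_{L^2(\Gamma)}^2\leq C$; hence the stricter scaling $h^2/\tau\searrow 0$ is needed to dominate $Ch^2/\tau$.

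The principal obstacle is this sharper $L^2$-of-increment estimate: a naive Poincaré bound only gives $\sum_n\norm{\phi\h\nn-\phi\h\no}_{L^2}^2\leq C$ and would force the stricter condition $h^2/\tau\searrow0$ uniformly, destroying the relaxation permitted by surface diffusion. The extra factor of $\tau^{1/2}$ produced by testing the phase-field equation with its own increment, and then coupling the dissipation bound on $\nabla\muOh\nn$ with the equipartition-type bound on $\nabla(\phi\h\nn-\phi\h\no)$ from Corollary~\ref{cor:stability}, is precisely what reconciles the estimate with the weaker $h^4/\tau\searrow 0$ condition when $\kappa>0$.
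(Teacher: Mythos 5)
Your proof is correct, and its skeleton is the same as the paper's: test with an $H^1$-stable approximation of $\theta$, peel off the mass-lumping error via Lemma~\ref{lem:ihfe}, absorb the $\muOh\nn$ contribution into the dissipation bound of Corollary~\ref{cor:stability}, and invoke \ref{item:htau} (with an inverse estimate on $\Gamma$ when $\kappa=0$). The one genuine difference is your choice of test function: the paper takes $\theta\h:=\projUhsop\theta$, the \emph{orthogonal} $L^2$-projection onto $\Uhs$ (whose $H^1$-stability under \ref{item:disc:space} it cites from \cite{BramblePasciakSteinbach2002}), so that $\iOmega\dtau\phi\h\nn\,\theta\h=\iOmega\dtau\phi\h\nn\,\theta$ holds \emph{exactly} because $\dtau\phi\h\nn\in\Uhs$ --- your ``delicate piece'' $\iOmega\dtau\phi\h\nn(\eta-\theta\h)$ simply never appears, and the only surviving terms are the interpolation error $C\tfrac{h^2}{\tau}\norm{\nabla\phi\h\nn-\nabla\phi\h\no}_{L^2\trkla{\Omega}}$ and the $\nabla\muOh\nn$ term. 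Your Cl\'ement-based route forces you to control $\tfrac{h^2}{\tau}\sum_n\norm{\phi\h\nn-\phi\h\no}_{L^2\trkla{\Omega}}^2$, and the auxiliary estimate you devise for it --- testing \eqref{eq:feform:Omega:phi} with the increment itself and combining the two halves of Corollary~\ref{cor:stability} via Cauchy--Schwarz in time to get $\sum_n\norm{\phi\h\nn-\phi\h\no}_{L^2\trkla{\Omega}}^2\leq C\tau^{1/2}$ --- is valid and rather nice, but it is extra machinery the paper does not need. (Minor slips only: you refer to the $h^2\norm{\nabla\dtau\phi\h\nn}_{L^2}$ term as ``the third term'' when it is the second in your display, and in the $\kappa=0$ boundary case you should state explicitly that the tangential-gradient term from Lemma~\ref{lem:ihfe} is converted by an inverse estimate into $\tfrac{h^2}{\tau}\sum_n\norm{\phi\h\nn-\phi\h\no}_{L^2\trkla{\Gamma}}^2$ before the $\antikappa$-bound is applied; this is exactly what the paper does.)
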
 
\begin{proof}
We take $\theta\in H^1\trkla{\Omega}$ and test \eqref{eq:feform:Omega:phi} by $\theta\h:=\projUhsop\theta$, where $\projUhsop$ is the orthogonal $L^2$-projection onto $\Uhs$.
We decompose the first term in \eqref{eq:feform:Omega:phi} into
\begin{align}\label{eq:ihtime1}
\iOmega\Ih{\dtau\phi\h\nn\theta\h}=\iOmega\dtau\phi\h\nn\theta -\iOmega\trkla{\ids-\Ihop}\gkla{\dtau\phi\h\nn\theta\h}\,.
\end{align}
The first term will be used to obtain a norm on the dual space of $H^1\trkla{\Omega}$.
The second term can be controlled via Lemma \ref{lem:ihfe} and the $H^1$-stability of $\projUhsop$ (cf. \cite{BramblePasciakSteinbach2002}).
Using these considerations and Hölder's inequality, we obtain
\begin{align}\label{eq:ihtime2}
\abs{\iOmega\dtau\phi\h\nn\theta}\leq C\tfrac{h^2}{\tau}\norm{\nabla\phi\h\nn-\nabla\phi\h\no}_{L^2\trkla{\Omega}}\norm{\theta}_{H^1\trkla{\Omega}} + \norm{\nabla\muOh\nn}_{L^2\trkla{\Omega}}\norm{\theta}_{H^1\trkla{\Omega}}\,.
\end{align}
Dividing by $\norm{\theta}_{H^1\trkla{\Omega}}$, taking the second power on both sides, multiplying by $\tau$, and summing over all time steps provides 
\begin{align}
\tau\sum_{n=1}^N\norm{\dtau\phi\h\nn}_{\trkla{H^1\trkla{\Omega}}^\prime}^2\leq C\tfrac{h^4}{\tau}\sum_{n=1}^N\norm{\nabla\phi\h\nn-\nabla\phi\h\no}_{L^2\trkla{\Omega}}^2 +C\tau\sum_{n=1}^N\norm{\nabla\muOh\nn}_{L^2\trkla{\Omega}}^2\,.
\end{align}
Applying the already established regularity results and \ref{item:htau} completes the proof of the left inequality in \eqref{eq:timeder}.
For the case $\kappa>0$, the right inequality in \eqref{eq:timeder} can be established using similar computations.
In the case $\kappa=0$, we combine Lemma \ref{lem:ihfe} with an inverse estimate and obtain 
\begin{align}
\tau\sum_{n=1}^N\norm{\dtau\phi\h\nn}_{\trkla{H^1\trkla{\Gamma}}^\prime}^2\leq C\tfrac{h^2}{\tau}\sum_{n=1}^N\norm{\phi\h\nn-\phi\h\no}_{L^2\trkla{\Gamma}}^2 +C\tau\sum_{n=1}^N\norm{\nabla\muGh\nn}_{L^2\trkla{\Gamma}}^2\,.
\end{align}
Again, the already established regularity results and \ref{item:htau} complete the proof.
\end{proof}
In order to pass to the limit $\trkla{h,\tau}\searrow0$, we define time-interpolants of time-discrete functions $a\nn$, $\n=0,...,N$, and introduce some time-index-free notation as follows.\\
\begin{subequations}\label{eq:timeindexfree}
\begin{align}
a\tl\trkla{.,t}&:=\tfrac{t-t\no}{\tau}a\nn\trkla{.}+\tfrac{t\nn-t}{\tau} a\no\trkla{.} &&t\in\tekla{t\no,t\nn},n\geq1\,,\\
a\tp\trkla{.,t}&:=a\nn\trkla{.},\ \ a\tm\trkla{.,t}:=a\no\trkla{.}&&t\in(t\no,t\nn], n\geq1\,.
\end{align}
\end{subequations}
We want to point out that the time derivative of $a\tl$ coincides with the difference quotient, i.e.
\begin{align}
\para{t} a\tl= \para{t}\rkla{\tfrac{t-t\no}{\tau}a\nn+\tfrac{t\nn-t}{\tau} a\no} = \tfrac1\tau a\nn-\tfrac1\tau a\no =\dtau a\nn\,.
\end{align}
If a statement is valid for $a\tl$, $a\tp$, and $a\tm$, we use the abbreviation $a\tpm$.
With this notation, system \eqref{eq:feform} reads as follows.
\begin{subequations}\label{eq:timecont}
\begin{align}
\iOmegaT\Ih{\para{t}\phi\h\tl \theta\h} +\mobO\iOmegaT\nabla\muOh\tp\cdot\nabla\theta\h =& 0\,,\label{eq:timecont:Omega:phi}\\
\iGammaT\IhG{\para{t}\phi\h\tl \theta\h} + \mobG\iGammaT\nablaG\muGh\tp\cdot\nablaG\theta\h =& 0\,,\label{eq:timecont:Gamma:phi}
\end{align}
\begin{multline}
\iOmegaT\Ih{\muOh\tp\theta\h} + \iGammaT\IhG{\muGh\tp\theta\h} = \iOmegaT\nabla\phi\h\tp\cdot\nabla\theta\h \\
+\iOmegaT \Ih{\rkla{F_+^\prime\trkla{\phi\h\tp}+F_-^\prime\trkla{\phi\h\tm}}\theta\h} \\
+\kappa\iGammaT\nablaG\phi\h\tp\cdot\nablaG\theta\h +\iGammaT \IhG{\rkla{G_+^\prime\trkla{\phi\h\tp}+G_-^\prime\trkla{\phi\h\tm}}\theta\h} \label{eq:timecont:mu}
\end{multline}
\end{subequations}
for all $\theta\h\in L^2\trkla{0,T;\Uhs}$.
Similarly, we can rewrite the regularity results obtained in Corollary \ref{cor:stability} and Lemma \ref{lem:timeder} as
\begin{subequations}
\begin{multline}\label{eq:bounds}
\norm{\phi\h\tpm}_{L^\infty\trkla{0,T;H^1\trkla{\Omega}}}^2 +\kappa\norm{\phi\h\tpm}_{L^\infty\trkla{0,T;H^1\trkla{\Gamma}}}^2\\
 + \tau^{-1}\norm{\nabla\phi\h\tp-\nabla\phi\h\tm}_{L^2\trkla{0,T;L^2\trkla{\Omega}}}^2 +\kappa\tau^{-1}\norm{\nablaG\phi\h\tp-\nablaG\phi\h\tm}_{L^2\trkla{0,T;L^2\trkla{\Gamma}}}^2\\
 +\antikappa\tau^{-1}\norm{\phi\h\tp-\phi\h\tm}_{L^2\trkla{0,T;L^2\trkla{\Gamma}}}^2 
+\norm{\muOh\tp}_{L^2\trkla{0,T;H^1\trkla{\Omega}}}^2 +\norm{\muGh\tp}_{L^2\trkla{0,T;H^1\trkla{\Gamma}}}^2\leq C\,,
\end{multline}
as well as
\begin{align}\label{eq:boundsb}
\norm{\para{t}\phi\h\tl}_{L^2\trkla{0,T;\trkla{H^1\trkla{\Omega}}^\prime}}^2&\leq C&&\text{and}&&
\norm{\para{t}\phi\h\tl}_{L^2\trkla{0,T;\trkla{H^1\trkla{\Gamma}}^\prime}}^2\leq C\,.
\end{align}
\end{subequations}
These regularity results can be used to identify converging subsequences.
\begin{lemma}
Let the assumptions \ref{item:disc:time}, \ref{item:disc:space}, \ref{item:disc:gamma}, \ref{item:potentials}, \ref{item:potentialsbounds},  \ref{item:initial}, and \ref{item:htau} hold true.
Furthermore, let $\trkla{\phi\h\tpm,\,\muOh\tp,\,\muGh\tp}$ be a solution to \eqref{eq:timecont}.
Then there exists a subsequence (again denoted by $\trkla{\phi\h\tpm,\,\muOh\tp,\,\muGh\tp}$) and functions 
\begin{subequations}
\begin{align}
\phi&\in L^\infty\trkla{0,T;H^1\trkla{\Omega}}\cap H^1\trkla{0,T;\trkla{H^1\trkla{\Omega}}^\prime}\,,\\
\psi&\in \left\{\begin{matrix}
L^\infty\trkla{0,T;H^1\trkla{\Gamma}}\cap H^1\trkla{0,T;\trkla{H^1\trkla{\Gamma}}^\prime}&\text{~if~}\kappa>0\,,\\
L^\infty\trkla{0,T;H^{1/2}\trkla{\Gamma}}\cap H^1\trkla{0,T;\trkla{H^1\trkla{\Gamma}}^\prime}&\text{~if~}\kappa=0\,,
\end{matrix}\right.\\
\muO&\in L^2\trkla{0,T;H^1\trkla{\Omega}}\,,\\
\muG&\in L^2\trkla{0,T;H^1\trkla{\Gamma}}\,
\end{align}
\end{subequations}
such that $\trace{\phi}=\psi$ almost everywhere on $\Gamma_T$ and for $\trkla{h,\tau}\searrow0$
\begin{subequations}
\begin{align}
\phi\h\tpm&\weakstar\phi&&\text{in~} L^\infty\trkla{0,T;H^1\trkla{\Omega}}\,,\label{eq:conv:Omega:phi:weakstar}\\
\para{t}\phi\h\tl&\weak\para{t}\phi&&\text{in~} L^2\trkla{0,T;\trkla{H^1\trkla{\Omega}}^\prime}\label{eq:conv:Omega:phi:weak}\,,\\
\phi\h\tpm&\rightarrow\phi&&\text{in~} L^p\trkla{0,T,L^s\trkla{\Omega}}\,,\label{eq:conv:Omega:phi:strong}\\
\trace{\phi\h\tpm}&\weakstar\psi&&\text{in~} \left\{\begin{matrix}
L^\infty\trkla{0,T;H^1\trkla{\Gamma}}&\text{if~}\kappa>0\,,\\
L^\infty\trkla{0,T;H^{1/2}\trkla{\Gamma}} &\text{if~}\kappa=0\,,
\end{matrix}\right.\label{eq:conv:Gamma:phi:weakstar}\\
\para{t}\trace{\phi\h\tl}&\weak\para{t}\psi&&\text{in~}
L^2\trkla{0,T;\trkla{H^1\trkla{\Gamma}}^\prime}\,,\label{eq:conv:Gamma:phi:weak}\\
\trace{\phi\h\tpm}&\rightarrow\psi&&\text{in~}\left\{\begin{matrix}
L^p\trkla{0,T;L^q\trkla{\Gamma}}&\text{if~}\kappa>0\,,\\
L^p\trkla{0,T;L^{\tilde{s}}\trkla{\Gamma}}&\text{if~}\kappa=0\,,
\end{matrix}\right.\label{eq:conv:Gamma:phi:strong}\\
\muOh\tp&\weak \muO&&\text{in~}L^2\trkla{0,T;H^1\trkla{\Omega}}\,,\label{eq:conv:Omega:mu:weak}\\
\muGh\tp&\weak \muG&&\text{in~}L^2\trkla{0,T;H^1\trkla{\Gamma}}\label{eq:conv:Gamma:mu:weak}
\end{align}
for all $p<\infty$, $s\in[1,\tfrac{2d}{d-2})$, $q<\infty$ and $\tilde{s}\in[1,\tfrac{2(d-1)}{d-2})$.
\end{subequations}
\end{lemma}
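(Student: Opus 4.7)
The plan is to apply the standard Banach--Alaoglu and Aubin--Lions machinery to the uniform bounds \eqref{eq:bounds}--\eqref{eq:boundsb}, paying separate attention to the two cases $\kappa>0$ and $\kappa=0$, and to verify in the end that the three time interpolants $\phi\h\tl,\phi\h\tp,\phi\h\tm$ all share the same limit and that $\trace{\phi}=\psi$.

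First, from \eqref{eq:bounds} I extract, by Banach--Alaoglu, subsequences of $\phi\h\tpm$ converging weakly-$*$ in $L^\infty\trkla{0,T;H^1\trkla{\Omega}}$ and of $\muOh\tp$, $\muGh\tp$ converging weakly in $L^2\trkla{0,T;H^1\trkla{\Omega}}$ and $L^2\trkla{0,T;H^1\trkla{\Gamma}}$, respectively. The bound \eqref{eq:boundsb} yields $\para{t}\phi\h\tl\weak\para{t}\phi$ in $L^2\trkla{0,T;\trkla{H^1\trkla{\Omega}}^\prime}$ after identifying the limit of $\phi\h\tl$ with $\phi$; for the latter I observe that $\norm{\phi\h\tl-\phi\h\tp}_{L^2\trkla{0,T;L^2\trkla{\Omega}}}^2\leq \tfrac{\tau^2}{3}\norm{\partial_t\phi\h\tl}_{L^2\trkla{0,T;L^2\trkla{\Omega}}}^2$ (and similarly for $\phi\h\tm$), which, combined with the control of $\norm{\nabla\phi\h\tp-\nabla\phi\h\tm}_{L^2 L^2}^2\leq C\tau$ from \eqref{eq:bounds}, shows that all three interpolants converge to the same $\phi$.

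For the strong convergences \eqref{eq:conv:Omega:phi:strong} I apply the Aubin--Lions--Simon lemma with the triple $H^1\trkla{\Omega}\compemb L^s\trkla{\Omega}\emb \trkla{H^1\trkla{\Omega}}^\prime$ for $s\in[1,\tfrac{2d}{d-2})$, using \eqref{eq:bounds} and \eqref{eq:boundsb}; this yields $\phi\h\tl\rightarrow \phi$ strongly in $L^p\trkla{0,T;L^s\trkla{\Omega}}$ for every $p<\infty$, and the same limit and strong convergence then follow for $\phi\h\tpm$ by the argument above. On the boundary I distinguish two regimes. If $\kappa>0$, \eqref{eq:bounds} gives $\phi\h\tpm$ bounded in $L^\infty\trkla{0,T;H^1\trkla{\Gamma}}$ and the time-derivative bound on $\Gamma$ is already provided by \eqref{eq:boundsb}; applying Aubin--Lions with $H^1\trkla{\Gamma}\compemb L^\infty\trkla{\Gamma}\emb \trkla{H^1\trkla{\Gamma}}^\prime$ (valid in the spatial dimensions considered here) yields the claimed strong convergence. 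If $\kappa=0$, the spatial bound on $\Gamma$ is only $L^\infty\trkla{0,T;H^{1/2}\trkla{\Gamma}}$ through the trace theorem applied to \eqref{eq:conv:Omega:phi:weakstar}; combining this with \eqref{eq:boundsb} and Aubin--Lions with $H^{1/2}\trkla{\Gamma}\compemb L^{\tilde s}\trkla{\Gamma}\emb \trkla{H^1\trkla{\Gamma}}^\prime$ for $\tilde s\in[1,\tfrac{2(d-1)}{d-2})$ produces the corresponding strong convergence. The weak-$*$ convergence of $\trace{\phi\h\tpm}$ in the respective space follows by extracting a further subsequence and identifying the limit with $\psi:=\trace{\phi}$ via the continuity of $\traceop$ on $H^1\trkla{\Omega}$ and the uniqueness of limits in $L^p\trkla{0,T;L^{\tilde s}\trkla{\Gamma}}$.

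The main obstacle is the coordination between the three time interpolants and the two spatial domains: in particular, one must be careful that in the case $\kappa=0$ the bound on $\phi\h\tp-\phi\h\tm$ on $\Gamma$ provided by \eqref{eq:bounds} involves only the constant $\antikappa$, which is strictly positive by \ref{item:potentialsbounds} precisely when $\kappa=0$, so that the Aubin--Lions argument on $\Gamma$ still goes through. Once all convergences have been extracted, passing to a diagonal subsequence delivers all statements simultaneously, and the identification $\trace{\phi}=\psi$ a.e.\ on $\Gamma_T$ is immediate from the strong convergences combined with the continuity of the trace operator.
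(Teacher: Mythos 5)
Your argument follows essentially the same route as the paper: Banach--Alaoglu for the weak and weak$^*$ limits, Aubin--Lions for the strong convergences in $\Omega$ and on $\Gamma$ (with the trace theorem supplying the $H^{1/2}\trkla{\Gamma}$ bound when $\kappa=0$), and the bound on $\tau^{-1}\norm{\nabla\phi\h\tp-\nabla\phi\h\tm}^2_{L^2\trkla{0,T;L^2\trkla{\Omega}}}$ to identify the limits of the three interpolants. The one genuine difference is the identification $\trace{\phi}=\psi$: the paper tests with $\bs{\theta}\in L^2\trkla{0,T;\trkla{C^\infty\trkla{\Omega}}^d}$ and passes to the limit in the Gauss--Green identity, so that the boundary terms force $\iGammaT\trkla{\psi-\trace{\phi}}\bs{\theta}\cdot\bs{n}=0$; you instead invoke the weak continuity of the bounded linear operator $\traceop$ together with uniqueness of weak limits. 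Both are valid; yours is slightly more abstract, the paper's is self-contained and makes the role of the normal trace explicit. Two small points to tighten: (i) your interpolant estimate $\norm{\phi\h\tl-\phi\h\tp}_{L^2\trkla{0,T;L^2\trkla{\Omega}}}^2\leq\tfrac{\tau^2}{3}\norm{\para{t}\phi\h\tl}_{L^2\trkla{0,T;L^2\trkla{\Omega}}}^2$ appeals to an $L^2\trkla{0,T;L^2\trkla{\Omega}}$ bound on $\para{t}\phi\h\tl$ that is not available -- only the $\trkla{H^1\trkla{\Omega}}^\prime$ bound \eqref{eq:boundsb} is; the estimate should be stated in that dual norm (which still suffices for Aubin--Lions), or one argues as the paper does via the gradient-difference bound plus conservation of the mean value; (ii) the embedding $H^1\trkla{\Gamma}\compemb L^\infty\trkla{\Gamma}$ you invoke for $\kappa>0$ fails for $d=3$ (two-dimensional $\Gamma$), though this issue is inherited from the statement of the lemma itself rather than introduced by your argument.
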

\begin{proof}
The weak and weak$^*$ convergence expressed in \eqref{eq:conv:Omega:phi:weakstar}, \eqref{eq:conv:Omega:phi:weak}, \eqref{eq:conv:Omega:mu:weak}, and \eqref{eq:conv:Gamma:mu:weak} follows directly from the bounds in \eqref{eq:bounds} and \eqref{eq:boundsb}.
The strong convergence in \eqref{eq:conv:Omega:phi:strong} then follows from the bounds for $\phi\h\tpm$ in $L^\infty\trkla{0,T;H^1\trkla{\Omega}}$, the bounds on $\para{t}\phi\h\tl$ in $L^2\trkla{0,T;\trkla{H^1\trkla{\Omega}}^\prime}$, the Aubin--Lions theorem, and the fact that $\phi\h\tp$, $\phi\h\tm$, and $\phi\h\tl$ converge towards the same limit function due to the bound on $\tau^{-1}\norm{\nabla\phi\h\tp-\nabla\phi\h\tm}_{L^2\trkla{0,T;L^2\trkla{\Omega}}}^2$.\\
Similar arguments provide \eqref{eq:conv:Gamma:phi:weakstar}-\eqref{eq:conv:Gamma:phi:strong} in the case $\kappa>0$.
In the case $\kappa=0$, we use the uniform bound on $\norm{\phi\h\nn}_{H^1\trkla{\Omega}}$ to deduce a uniform bound for $\norm{\trace{\phi\h\nn}}_{H^{\trkla{1/2}}\trkla{\Gamma}}$.
As $H^{1/2}\trkla{\Gamma}$ is compactly embedded in $L^{\tilde{s}}\trkla{\Gamma}$ for $\tilde{s}\in[1,\tfrac{2\trkla{d-1}}{d-2})$ (cf. \cite{HitchhikersGuide}), we verify \eqref{eq:conv:Gamma:phi:weakstar}-\eqref{eq:conv:Gamma:phi:strong} for $\kappa=0$.
It remains to show that $\psi$ can be identified with $\trace{\phi}$.
We choose $\bs{\theta}\in L^2\trkla{0,T;\trkla{C^\infty{\trkla{\Omega}}}^d}$ and compute
\begin{multline}
\iOmegaT\phi\div\bs{\theta}\leftarrow\iOmegaT\phi\h\tpm\div\bs{\theta}=-\iOmegaT\nabla\phi\h\tpm\cdot\bs{\theta} +\iGammaT\trace{\phi\h\tpm}\bs{\theta}\cdot\bs{n}\\
\rightarrow -\iOmegaT\nabla\phi\cdot\bs{\theta} +\iGammaT\psi\bs{\theta}\cdot\bs{n}=\iOmegaT\phi\div\bs{\theta}-\iGammaT\trace{\phi}\bs{\theta}\cdot\bs{n}+\iGammaT\psi\bs{\theta}\cdot\bs{n}\,.
\end{multline}
\end{proof}
\begin{theorem}\label{th:convergence}
Let $d\in\tgkla{2,3}$ and let the assumptions \ref{item:disc:time}, \ref{item:disc:space}, \ref{item:disc:gamma}, \ref{item:potentials}, \ref{item:potentialsbounds},  \ref{item:initial}, and \ref{item:htau} hold true.
Then a tuple $\trkla{\phi,\mu,\muG}$ satisfying
\begin{subequations}
\begin{align}
\phi&\in L^\infty\trkla{0,T;H^1\trkla{\Omega}}\cap H^1\trkla{0,T;\trkla{H^1\trkla{\Omega}}^\prime}\,,\\
\trace{\phi}&\in \left\{\begin{matrix}
L^\infty\trkla{0,T;H^1\trkla{\Gamma}}\cap H^1\trkla{0,T;\trkla{H^1\trkla{\Gamma}}^\prime}&\text{~if~}\kappa>0\,,\\
L^\infty\trkla{0,T;H^{1/2}\trkla{\Gamma}}\cap H^1\trkla{0,T;\trkla{H^1\trkla{\Gamma}}^\prime}&\text{~if~}\kappa=0\,,
\end{matrix}\right.\\
\muO&\in L^2\trkla{0,T;H^1\trkla{\Omega}}\,,\\
\muG&\in L^2\trkla{0,T;H^1\trkla{\Gamma}}\,
\end{align}
\end{subequations}
can be obtained from discrete solutions to \eqref{eq:feform} by passing to the limit $\trkla{h,\tau}\searrow0$.
This tuple solves \eqref{eq:model} in the following weak sense:
\begin{subequations}\label{eq:result}
\begin{align}
\int_0^T\skla{\para{t}\phi, \theta} +\mobO\iOmegaT\nabla\muO\cdot\nabla\theta =& 0&&\forall \theta\in L^2\trkla{0,T;H^1\trkla{\Omega}}\,,\label{eq:result:Omega:phi}\\
\int_0^T\skla{\para{t}\trace{\phi}, \theta}_{\Gamma} + \mobG\iGammaT\nablaG\muG\cdot\nablaG\theta =& 0&&\forall\theta\in L^2\trkla{0,T;H^1\trkla{\Gamma}}\,,\label{eq:result:Gamma:phi}
\end{align}
\begin{multline}
\iOmegaT\muO\theta + \iGammaT\muG\trace{\theta} = \iOmegaT\nabla\phi\cdot\nabla\theta +\iOmegaT F^\prime\trkla{\phi}\theta \\
+\kappa\iGammaT\nablaG\trace{\phi}\cdot\nablaG\trace{\theta} +\iGammaT G^\prime\trkla{\trace{\phi}}\trace{\theta} \qquad\qquad\forall \theta\in L^2\trkla{0,T;\Xkappa}\,.\label{eq:result:mu}
\end{multline}
\end{subequations}
\end{theorem}
\begin{proof}
We start by passing to the limit in \eqref{eq:timecont:Omega:phi}.
Choosing $\theta\h:=\Ih{\theta}$ for $\theta\in L^2\trkla{0,T;C^\infty\trkla{\overline{\Omega}}}$, we have $\theta\h\rightarrow \theta$ in $L^2\trkla{0,T;H^1\trkla{\Omega}}$ (cf. \cite{BrennerScott}).
We decompose the first term as
\begin{align}
\iOmegaT\Ih{\para{t}\phi\h\tl\theta\h}=\iOmegaT\para{t}\phi\h\tl\theta\h-\iOmegaT\trkla{\ids-\Ihop}\tgkla{\para{t}\phi\h\tl\theta}\,.
\end{align}
This allows us to combine the results from \eqref{eq:ihtime1} and \eqref{eq:ihtime2} with \eqref{eq:conv:Omega:phi:weak} and \eqref{eq:conv:Omega:mu:weak} to derive \eqref{eq:result:Omega:phi} for $\theta\in L^2\trkla{0,T;C^\infty\trkla{\overline{\Omega}}}$.
Noting that $L^2\trkla{0,T;C^\infty\trkla{\overline{\Omega}}}$ is dense in $L^2\trkla{0,T;H^1\trkla{\Omega}}$ yields the result.
Similar arguments allow us to pass to the limit in \eqref{eq:timecont:Gamma:phi} to obtain \eqref{eq:result:Gamma:phi}.\\
In order to pass to the limit in \eqref{eq:timecont:mu}, we choose $\theta\h\!:=\!\Ih{\theta}$ with $\theta\in L^2\trkla{0,T;C^\infty\trkla{\overline{\Omega}}}$ and assume that $\trace{\phi\h\tpm}\in L^\infty\trkla{0,T;H^{1/2}\trkla{\Gamma}}$, which is the case for $\kappa>0$ and $\kappa=0$. 
While the convergence of the left-hand side of \eqref{eq:timecont:mu} and the gradient terms is straightforward, the convergence of the terms including the derivative of the potential functions $F$ and $G$ require more finesse.
We will showcase the convergence of $\iGammaT\IhG{G_+^\prime\trkla{\phi\h\tp}\theta\h}$. 
Then, the convergence of the remaining parts can be obtained in an analogous manner.
According to \ref{item:potentialsbounds}, $G_+^\prime$ can be written as the sum of a polynomial of degree three and a globally Lipschitz-continuous component ${G_+^L}^\prime$.
We start with the decomposition
\begin{multline}
\label{eq:tmp:convergence}
\iGammaT\IhG{\trkla{\phi\h\tp}^3\theta\h} = \iGammaT\trkla{\phi\h\tp}^3\theta\h -\iGammaT\trkla{\ids-\IhGop}\gkla{\trkla{\phi\h\tp}^2}\phi\h\tp\theta\h \\
-\iGammaT\trkla{\ids-\IhGop}\gkla{\IhG{\trkla{\phi\h\tp}^2}\phi\h\tp}\theta\h -\iGammaT\trkla{\ids-\IhGop}\gkla{\IhG{\trkla{\phi\h\tp}^3}\theta\h}\,.
\end{multline}
The convergence of the first term on the right-hand side follows directly from \eqref{eq:conv:Gamma:phi:strong} and the strong convergence of $\theta\h\rightarrow\theta$.
Therefore, it remains to show that the remaining terms vanish when passing to the limit.
Recalling that $H^{1/2}\trkla{\Gamma}$ is continuously embedded in $L^4\trkla{\Gamma}$ (cf. \cite{HitchhikersGuide}), the estimates in Lemma \ref{lem:ihfe} and the standard inverse estimates (cf. \cite{BrennerScott}) provide
\begin{multline}
\abs{\iGamma\trkla{\ids-\IhGop}\gkla{\IhG{\trkla{\phi\h\tp}^3}\theta\h}}\leq C h\norm{\IhG{\trkla{\phi\h\tp}^3}}_{L^2\trkla{\Gamma}}\norm{\nablaG\theta\h}_{H^1\trkla{\Gamma}}\\
\leq Ch\norm{\phi\h\tp}_{L^6\trkla{\Gamma}}^3\norm{\nablaG\theta\h}_{H^1\trkla{\Gamma}}\leq Ch^{1/2}\norm{\phi\h\tp}_{L^4\trkla{\Gamma}}^3\norm{\nablaG\theta\h}_{H^1\trkla{\Gamma}}\,.
\end{multline}
Therefore, the last term in \eqref{eq:tmp:convergence} vanishes.
Furthermore, we derive the estimates
\begin{multline}
\abs{\iGamma\trkla{\ids-\IhGop}\gkla{\IhG{\trkla{\phi\h\tp}^2}\phi\h\tp}\theta\h}\\
\leq \norm{\trkla{\ids-\IhGop}\gkla{\IhG{\trkla{\phi\h\tp}^2}\phi\h\tp}}_{L^{5/4}\trkla{\Gamma}}\norm{\theta\h}_{H^1\trkla{\Gamma}}\\
\leq Ch^2\norm{\nabla\phi\h\tp}_{L^{10/3}\trkla{\Gamma}}\norm{\nabla\IhG{\trkla{\phi\h\tp}^2}}_{L^2\trkla{\Gamma}} \norm{\theta\h}_{H^1\trkla{\Gamma}}\\
\leq Ch^{2/5} \norm{\phi\h\tp}_{L^4\trkla{\Gamma}}^3 \norm{\theta\h}_{H^1\trkla{\Gamma}}\,
\end{multline}
and 
\begin{multline}
\abs{\iGamma\trkla{\ids-\IhGop}\gkla{\rkla{\phi\h\tp}^2}\phi\h\tp\theta\h}\\
\leq\norm{\trkla{\ids-\IhGop}\gkla{\rkla{\phi\h\tp}^2}}_{L^{3/2}\trkla{\Gamma}}\norm{\phi\h\tp}_{L^4\trkla{\Gamma}}\norm{\theta\h}_{H^1\trkla{\Gamma}}\\
\leq Ch^2\norm{\nablaG\phi\h\tp}_{L^3\trkla{\Gamma}}^2\norm{\phi\h\tp}_{L^4\trkla{\Gamma}}\norm{\theta\h}_{H^1\trkla{\Gamma}} \leq Ch^{1/6}\norm{\phi\h\tp}_{L^4\trkla{\Gamma}}^3\norm{\theta\h}_{H^1\trkla{\Gamma}}\,.
\end{multline}
As $\phi\h\tp\in L^p\trkla{0,T;L^4\trkla{\Gamma}}$, we obtain the convergence of the polynomial part of $G^\prime_+$.
To deal with the Lipschitz-continuous part ${G_+^L}^\prime$, we start with the decomposition
\begin{multline}
\iGammaT\IhG{{G_+^L}^\prime\trkla{\phi\h\tp}\theta\h}=\iGammaT {G_+^L}^\prime\trkla{\phi\h\tp}\theta\h \\
-\iGammaT\trkla{\ids-\IhGop}\gkla{{G_+^L}^\prime\trkla{\phi\h\tp}}\theta\h -\iGammaT\trkla{\ids-\IhGop}\gkla{\IhG{{G_+^L}^\prime\trkla{\phi\h\tp}}\theta\h}:= I+II+III\,.
\end{multline}
Combining Lemma \ref{lem:ihfe} with a standard inverse estimate, we compute
\begin{align}
\begin{split}
\abs{III}&\leq \int_0^TCh^2\norm{\nablaG\IhG{{G_+^L}^\prime\trkla{\phi\h\tp}}}_{L^2\trkla{\Gamma}}\norm{\nablaG\theta\h}_{L^2\trkla{\Gamma}}\\
&\leq \int_0^T Ch^{3/2}\norm{\IhG{{G_+^L}^\prime\trkla{\phi\h\tp}}}_{L^4\trkla{\Gamma}}\norm{\nablaG\theta\h}_{L^2\trkla{\Gamma}}\,.
\end{split}
\end{align}
Using the Lipschitz-continuity of ${G_+^L}^\prime$, we deduce 
\begin{multline}
\norm{{G_+^L}^\prime\trkla{\phi\h\tp}}_{L^\infty\trkla{0,T;L^4\trkla{\Gamma}}}+\norm{\IhG{{G_+^L}^\prime\trkla{\phi\h\tp}}}_{L^\infty\trkla{0,T;L^4\trkla{\Gamma}}}\\\leq C\norm{\phi\h\tp}_{L^\infty\trkla{0,T;L^4\trkla{\Gamma}}}+C\,,
\end{multline}
with a constant $C$ depending on the Lipschitz-constant of ${G_+^L}^\prime$.
Furthermore, the Lipschitz-continuity provides on every $K^\Gamma\in\Th^\Gamma$
\begin{multline}
\int_{K^\Gamma}\abs{\IhG{{G_+^L}^\prime\trkla{\phi\h\tp}}-{G_+^L}^\prime\trkla{\phi\h\tp}}^2\leq C\int_{K^\Gamma}\abs{\max_{K^\Gamma}\tgkla{\phi\h\tp}-\min_{K^\Gamma}\tgkla{\phi\h\tp}}^2\\
\leq Ch^2\int_{K^\Gamma}\abs{\nablaG\phi\h\tp}^2\,.
\end{multline}
Consequently, an inverse estimate yields
\begin{align}
\norm{\trkla{\ids-\IhGop}\gkla{{G_+^L}^\prime\trkla{\phi\h\tp}}}_{L^2\trkla{\Gamma}}\leq Ch\norm{\nablaG\phi\h\tp}_{L^2\trkla{\Gamma}}\leq Ch^{1/2}\norm{\phi\h\tp}_{L^4\trkla{\Gamma}}\,,
\end{align}
which proves that $II$ will also vanish when passing to the limit.
From the strong convergence \eqref{eq:conv:Gamma:phi:strong}, we deduce ${G_+^L}^\prime\trkla{\phi\h\tp}\rightarrow {G_+^L}^\prime\trkla{\trace{\phi}}$ almost everywhere.
Recalling ${G_+^L}^\prime\trkla{\phi\h\tp}\in L^\infty\trkla{0,T;L^4\trkla{\Gamma}}$, we may use Vitali's convergence theorem (see e.g. \cite{Alt2016}) to show ${G_+^L}^\prime\trkla{\phi\h\tp}\rightarrow {G_+^L}^\prime\trkla{\trace{\phi}}$ in $L^\infty\trkla{0,T;L^{\tilde{s}}\trkla{\Gamma}}$ for $\tilde{s}<4$.
The convergence of derivatives of the concave parts of $G$ follows from the same arguments.
The uniform bounds of $\phi\h\tpm$ in $L^\infty\trkla{0,T;H^1\trkla{\Omega}}$ provide enough regularity, to adapt the previously presented arguments to three spatial dimensions, which proves the convergence of the remaining terms.
As $C^\infty\trkla{\overline{\Omega}}$ is dense in $\Xkappa$, this concludes the proof.
\end{proof}
\begin{remark}\label{rem:ac}
The results presented in the preceding sections carry over to the case of Allen--Cahn-type dynamic boundary conditions (cf. \eqref{eq:bc:AC}), where we use 
\begin{align}\label{eq:feform:Gamma:phi:AC}
\iGamma\IhG{\dtau\phi\h\nn\theta\h}=-\mobG\iGamma\IhG{\muGh\nn\theta\h}&&\text{for~all~}\theta\h\in\Uhs\,.
\end{align}
instead of \eqref{eq:feform:Gamma:phi}.
The resulting scheme reads
\begin{multline}\label{eq:scheme:AC}
\Phi\nn +\tau\mobO\MOmega^{-1}\LOmega \begin{pmatrix}
\rkla{\mobO \restr{\LOmega}{\GammaSymb\times\GammaSymb} +\mobG\restr{\MOmega}{\GammaSymb\times\GammaSymb}\MGamma^{-1} \restr{\MOmega}{\GammaSymb\times\GammaSymb}}^{-1} & \mb{0}\\ \mb{0}&\mathds{1}
\end{pmatrix}\\
\cdot\begin{pmatrix}
-\mobO\restr{\LOmega}{\GammaSymb\times\InnerSymb}\restr{\MOmega}{\InnerSymb\times\InnerSymb}^{-1}\RHSO\trkla{\Phi\nn}+\mobG\restr{\MOmega}{\GammaSymb\times\GammaSymb}\MGamma^{-1}\LGamma\MGamma^{-1} \RHSG\trkla{\Phi\nn}\\
\restr{\MOmega}{\InnerSymb\times\InnerSymb}^{-1}\RHSO\trkla{\Phi\nn}
\end{pmatrix}=\Phi\no\,
\end{multline}
and is well defined, as $\rkla{\mobO \restr{\LOmega}{\GammaSymb\times\GammaSymb} +\mobG\restr{\MOmega}{\GammaSymb\times\GammaSymb}\MGamma^{-1} \restr{\MOmega}{\GammaSymb\times\GammaSymb}}$ is obviously a symmetric, positive definite matrix.\\
Although, $\iGamma\phi\h\nn$ is not conserved when using Allen--Cahn-type boundary conditions, testing \eqref{eq:feform:Gamma:phi:AC} by $1$ shows that $\abs{\iGamma\phi\h\nn}$ is bounded.
Consequently, the energy estimate still provides control over $\norm{\phi\h\nn}_{H^1\trkla{\Gamma}}$.\\
Testing \eqref{eq:feform:Gamma:phi:AC} by $\dtau\phi\h\nn$ shows $\tau\sum_{n=1}^N\norm{\dtau\phi\h\nn}_{L^2\trkla{\Gamma}}^2\leq C$, i.e. we obtain a slightly better regularity result for the discrete time derivative than we obtained for Cahn--Hilliard-type boundary conditions.
Using the time-index-free notation introduced in \eqref{eq:timeindexfree}, the bounds read
\begin{multline}\label{eq:bounds:AC}
\norm{\phi\h\tpm}_{L^\infty\trkla{0,T;H^1\trkla{\Omega}}}^2 +\kappa\norm{\phi\h\tpm}_{L^\infty\trkla{0,T;H^1\trkla{\Gamma}}}^2\\
 + \tau^{-1}\norm{\nabla\phi\h\tp-\nabla\phi\h\tm}_{L^2\trkla{0,T;L^2\trkla{\Omega}}}^2 +\kappa\tau^{-1}\norm{\nablaG\phi\h\tp-\nablaG\phi\h\tm}_{L^2\trkla{0,T;L^2\trkla{\Gamma}}}^2\\
 +\antikappa\tau^{-1}\norm{\phi\h\tp-\phi\h\tm}_{L^2\trkla{0,T;L^2\trkla{\Gamma}}}^2 
+\norm{\muOh\tp}_{L^2\trkla{0,T;H^1\trkla{\Omega}}}^2 +\norm{\muGh\tp}_{L^2\trkla{0,T;L^2\trkla{\Gamma}}}^2\\
+\norm{\para{t}\phi\h\tl}_{L^2\trkla{0,T;\trkla{H^1\trkla{\Omega}}^\prime}}^2+\norm{\para{t}\phi\h\tl}_{L^2\trkla{0,T;L^2\trkla{\Gamma}}}^2\leq C\,
\end{multline}
with $C>0$ independent of $h$ and $\tau$.
Based on these uniform bounds, we are able to identify converging subsequences and pass to the limit.\\

Variants of these boundary conditions are used to describe dynamic contact angles.
To recover the boundary condition suggested in \cite{Qian2006}, we choose $\kappa=0$, $\deltaG=1$, and
$G\trkla{\phi}= \tfrac\gamma2 \sin\trkla{\tfrac\pi2 \min\tgkla{\max\tgkla{\phi,-1},1}}+\tfrac{\tabs{\gamma}}2$, where the parameter $\gamma$ prescribes the static contact angle via Young's formula.
As $G$ satisfies \ref{item:potentials} and \ref{item:potentialsbounds} the previous results are also valid for the boundary condition \eqref{eq:dynamic_angle}.
\end{remark}

\section{Numerical simulations}\label{sec:simulation}
In this section, we present simulations to underline the practicality of the proposed scheme \eqref{eq:scheme}, which we implemented in the \texttt{C++} framework EconDrop (cf. \cite{Grun2013c,Campillo2012,GrunGuillenMetzger2016, Metzger2018,Metzger_2018b}).
This framework allows for adaptivity in space and time using the ideas presented in \cite{Grun2013c}, i.e. we are able to use meshes with a high resolution in the evolving interfacial area and a lower resolution in the bulk phases where $\phi\approx\pm1$. 
Similarly, the time increments can be varied such that they are small, when the solution changes rapidly and larger when the solution is almost stationary.
In the presented simulations, we used Newton's method to linearize \eqref{eq:scheme}, a biconjugate gradient stabilized method to solve the arising linear system, and a preconditioned conjugate gradient method to tackle the smaller auxiliary problem.
\subsection{Scenario 1}\label{subsec:separation}
As a first test case, we consider a phase-separation scenario in $\Omega=\trkla{0,1}^2$ starting from  the initial condition
\begin{align}
\phi\h^0\,:\,\R^2\supset\Omega\rightarrow \tekla{-1,+1}&&\trkla{x_1,x_2}\mapsto \Ih{0.1\sin\trkla{2\pi x_1}\sin\trkla{2\pi x_2}}\,.
\end{align}
To keep $\phi$ close to the physical meaningful interval $\tekla{-1,+1}$, we use the penalized double-well potential $W_{\operatorname{pen}}$ (cf. \eqref{eq:penpot}) with penalty parameter $C_{\text{pen}}=250$ for $F$ and $G$.
In this simulation, we use adaptive mesh refinement based on the criteria proposed in \cite{Grun2013c} using triangles with diameters between $\sqrt{2}\cdot2^{-6}$ and $2^{-8}$.
The dimensions of the corresponding finite element spaces are depicted in Fig.~\ref{fig:dim}.
Examples of the used triangulations can be found in Fig. \ref{fig:wire}.
The size of the time increment is also chosen adaptively based on the ideas presented in \cite{Grun2013c}, which leads to time increments between $6.3\cdot10^{-7}$ and $6.4\cdot10^{-4}$.
The remaining parameters are listed in Tab.~\ref{tab:params2}.\\
The evolution of the phase-separation process is depicted in Figure \ref{fig:scenario2}.
Thereby, the light color represents the phase $\phi=+1$ and the dark color represents $\phi=-1$ in all pictures except the first one.
As the initial data only contains values in $\tekla{-0.1,+0.1}$, we rescaled Fig. \ref{fig:scenario2:initial} such that the light color corresponds to $\phi=+0.1$, while the dark color corresponds to $\phi=-0.1$.\\
The phase-separation starts with the typical wave pattern (see Figs.~\ref{fig:scenario2:b}-\ref{fig:scenario2:d}), which then transforms into two entangled spirals (cf. Figs.~\ref{fig:scenario2:e}-\ref{fig:scenario2:f}).
In the course of the simulation, the spirals retreat (cf. Figs.~\ref{fig:scenario2:f}-\ref{fig:scenario2:l}) reducing the fluid-fluid contact area.
The corresponding decrease in energy of depicted in Fig.~\ref{fig:energy2}.\\
Fig.~\ref{fig:mass2} shows evolution of $\iOmega\phi\h\tl$ and $\iGamma\phi\h\tl$.
In theory, these quantities should be conserved.
In our simulation, the deviation of these quantities from their initial values is of order $10^{-4}$ and originates from the adaptivity of the used triangulation.\\
To compare \eqref{eq:scheme} with the straightforward approach of solving \eqref{eq:feform} directly in terms of practicality, we investigate the dependence of the condition numbers on the size of the time increment.
For this purpose, we computed the matrices needed in the first Newton step of both schemes for artificial time increments $\tau\in\{1\cdot10^{-4},\, 4\cdot10^{-5},\, 2\cdot10^{-5},\, 1\cdot10^{-5},\, 1\cdot10^{-6},\, 1\cdot10^{-7}\}$ and estimated their condition number using the \textsc{Matlab} function \texttt{condest}.
Averages of the condition numbers based on 51 equidistant points in time are plotted in Fig.~\ref{fig:cond2}.
As expected, the condition numbers in the straightforward approach grow for vanishing $\tau$.
Applying a Jacobi preconditioner to the system reduces the condition number drastically.
However, for vanishing $\tau$ the condition number of the preconditioned system still increases rapidly (cf. blue triangles in Fig.~\ref{fig:cond2}).
On the contrary, the condition number for \eqref{eq:scheme}, drops for decreasing $\tau$, as the matrix in \eqref{eq:scheme} converges towards the identity.\\
As discussed in Rem.~\ref{rem:efficiency}, solving \eqref{eq:scheme} requires us to solve a smaller auxiliary problem repeatedly.
In the discussed simulation, the average number of needed cg-iterations remained below 20 (cf.~Fig.~\ref{fig:averagecg}).\\
When developing our scheme, we boiled \eqref{eq:feform} down to \eqref{eq:scheme} using the fact that \eqref{eq:feform:Omega:phi} and \eqref{eq:feform:Gamma:phi} have to provide identical values for the trace of $\phi\h\nn$.
In order to validate our scheme, we use \eqref{eq:scheme} to compute the new phase-field values, recover $\muGh\nn$ via \eqref{eq:def:muG}, and check whether \eqref{eq:feform:Gamma:phi} still holds true.
In this simulation, the $L^2\trkla{\Gamma}$-norm of the deviation averages out to $3.3\cdot 10^{-9}$.
\begin{table}
\center
\begin{tabular}{c|c|c|c|c|c|c}
$\mobO$&$\delta$& $\sigma$& $\mobG$&$\deltaG$& $\kappa$ &$C_\text{pen}$\\
0.01&0.01&2&0.01&0.02&1&250
\end{tabular}
\caption{Parameters used in Section \ref{subsec:separation}.}
\label{tab:params2}
\end{table}
\begin{figure}
\newcommand{\scale}{.23\textwidth}
\subfloat[][$t=0$ (rescaled)]{
\includegraphics[width=\scale]{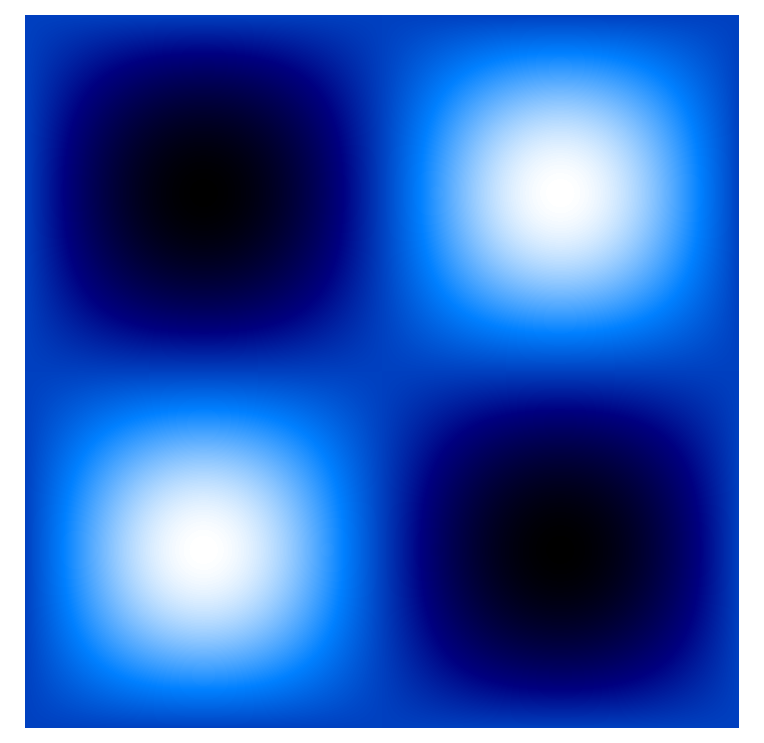}
\label{fig:scenario2:initial}
}\hfill
\subfloat[][$t=0.0022$]{
\includegraphics[width=\scale]{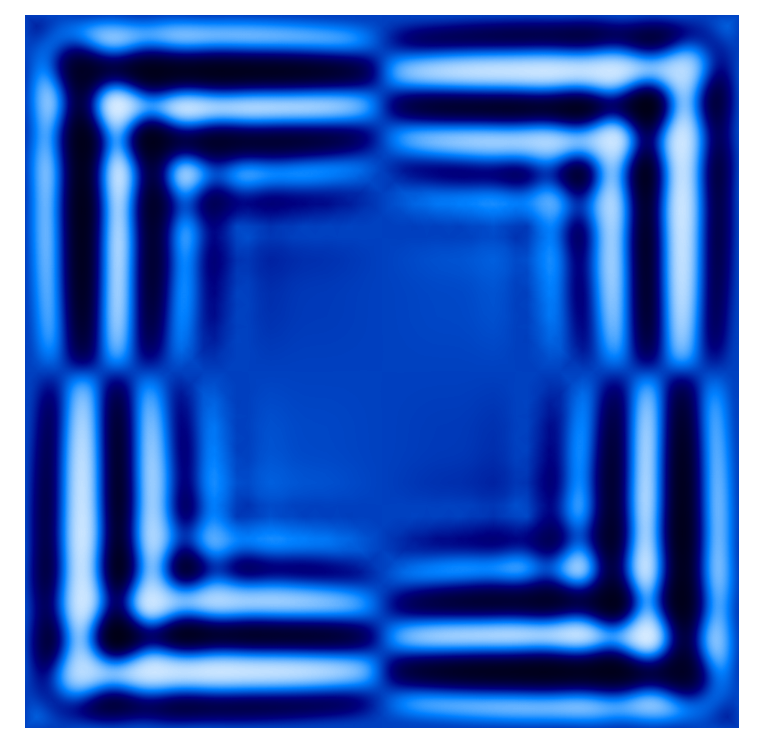}
\label{fig:scenario2:b}
}\hfill
\subfloat[][$t=0.0036$]{
\includegraphics[width=\scale]{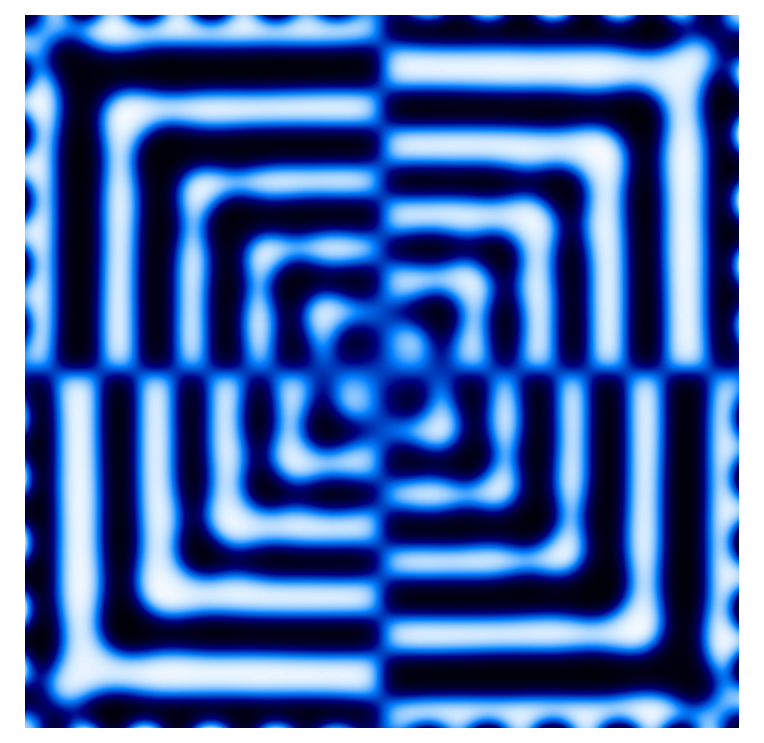}
}\hfill
\subfloat[][$t=0.0044$]{
\includegraphics[width=\scale]{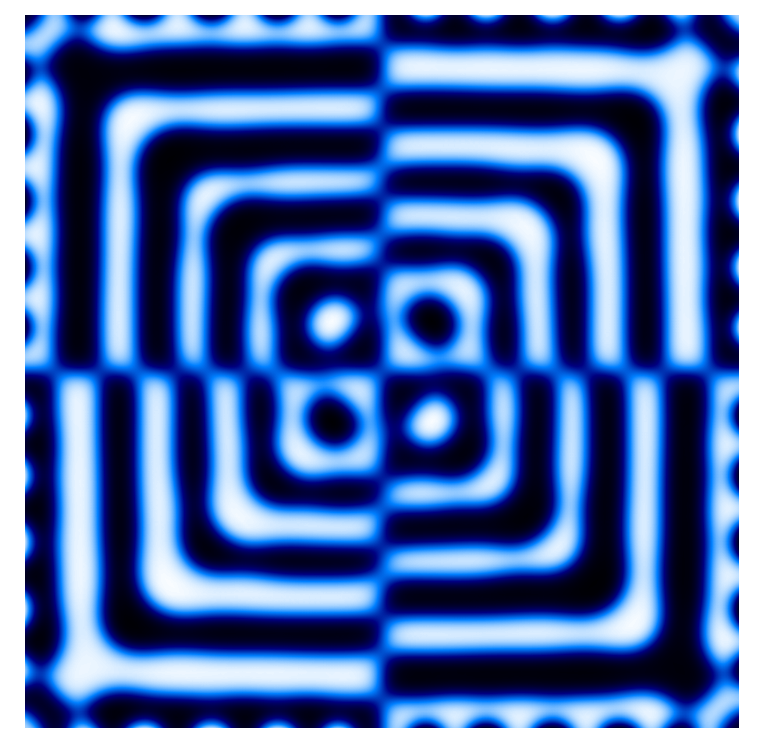}
\label{fig:scenario2:d}
}\\
\subfloat[][$t=0.0058$]{
\includegraphics[width=\scale]{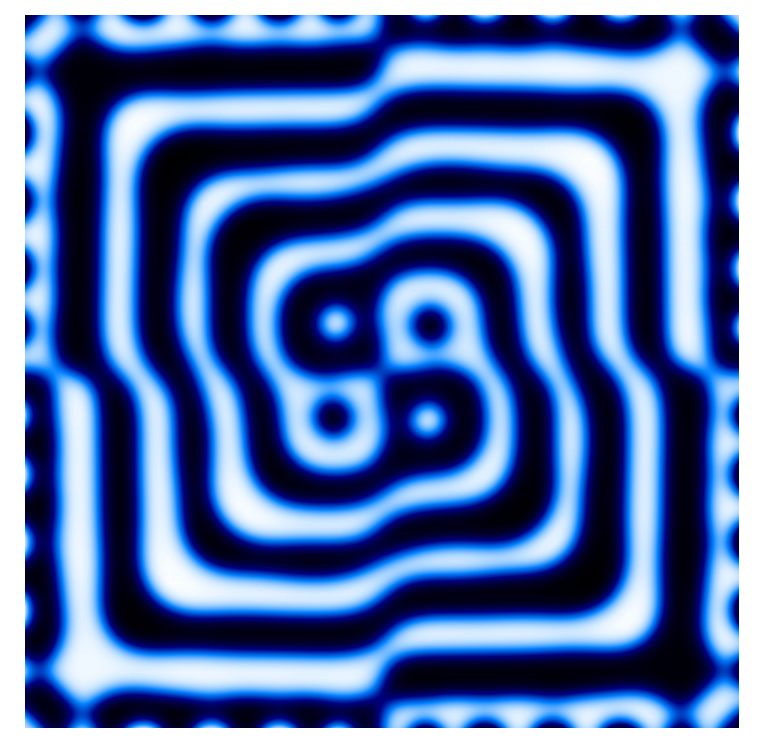}
\label{fig:scenario2:e}
}\hfill
\subfloat[][$t=0.01$]{
\includegraphics[width=\scale]{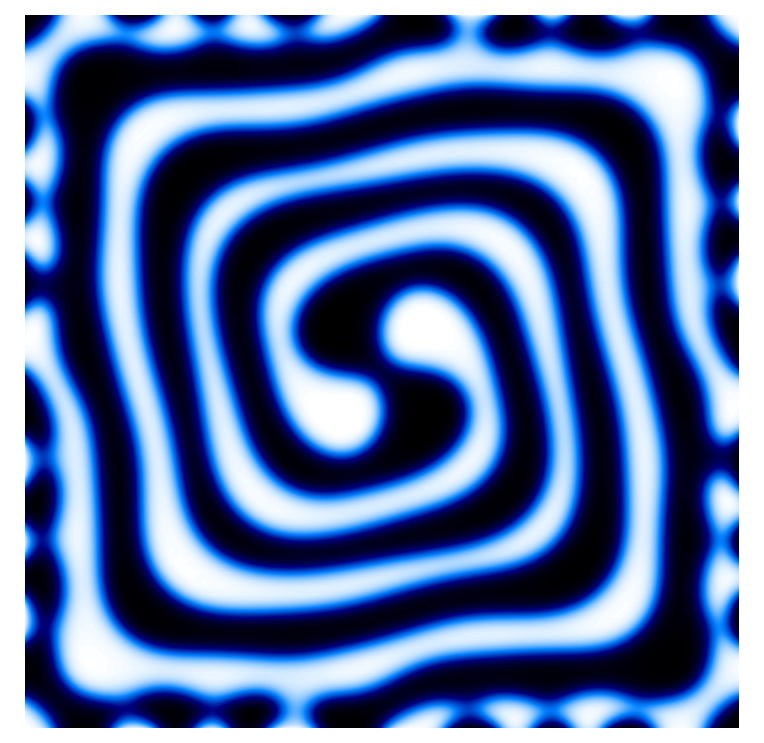}
\label{fig:scenario2:f}
}\hfill
\subfloat[][$t=0.02$]{
\includegraphics[width=\scale]{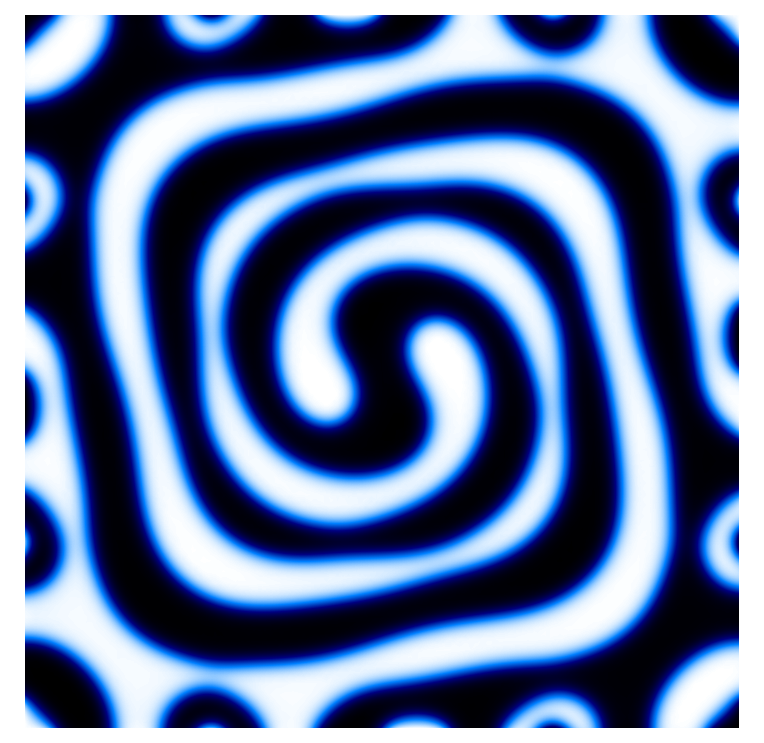}
}\hfill
\subfloat[][$t=0.04$]{
\includegraphics[width=\scale]{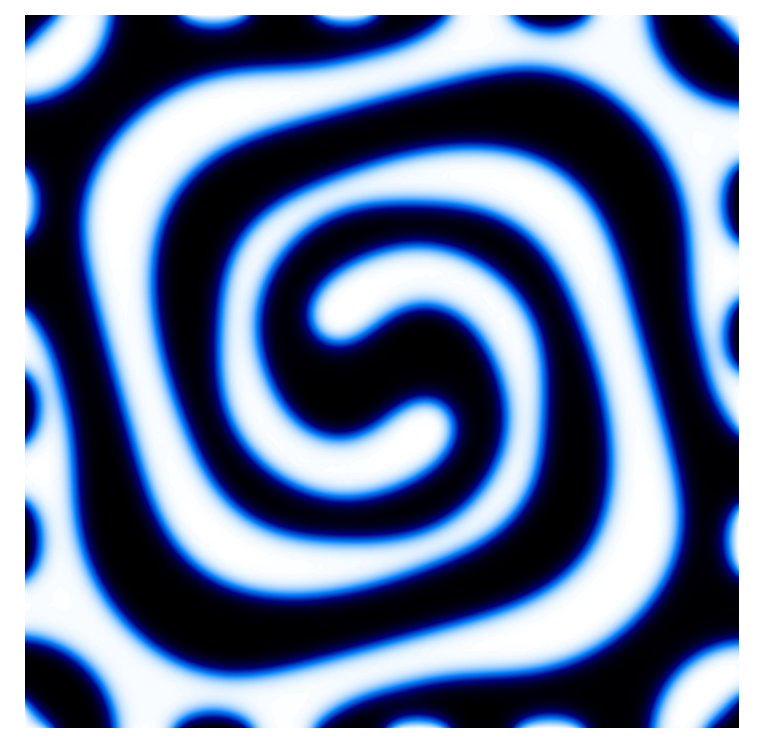}
}\\
\subfloat[][$t=0.08$]{
\includegraphics[width=\scale]{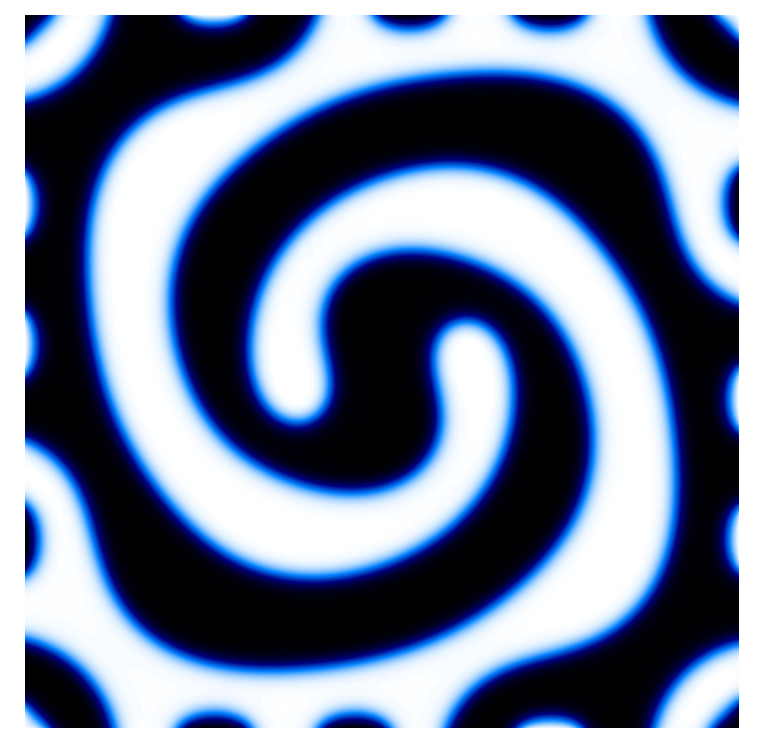}
}\hfill
\subfloat[][$t=0.14$]{
\includegraphics[width=\scale]{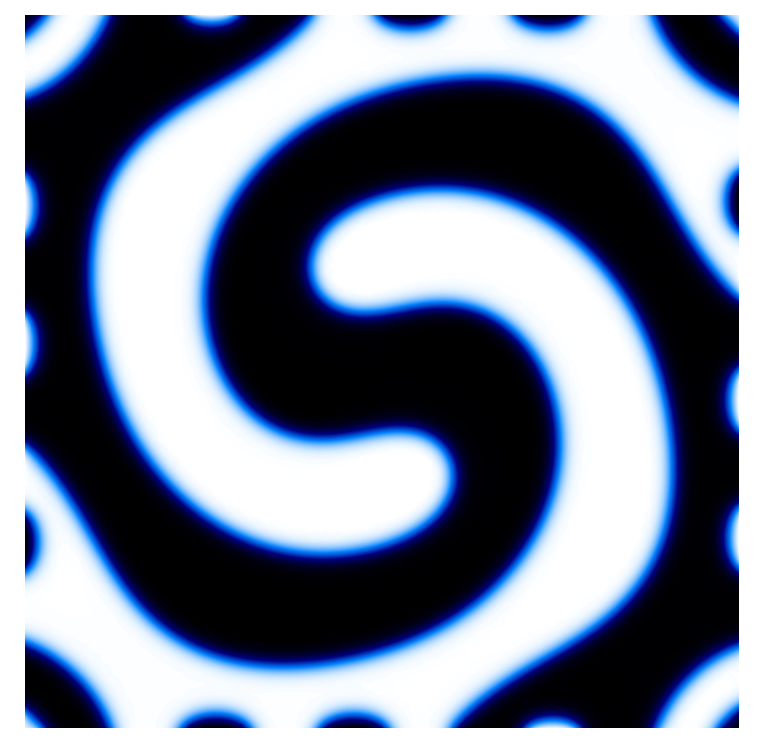}
}\hfill
\subfloat[][$t=0.35$]{
\includegraphics[width=\scale]{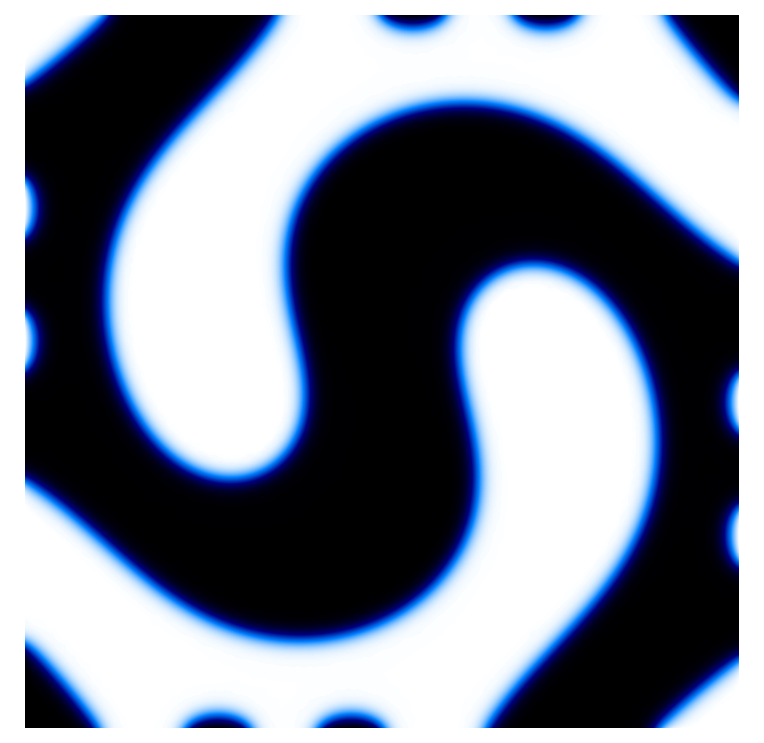}
}\hfill
\subfloat[][$t=1$]{
\includegraphics[width=\scale]{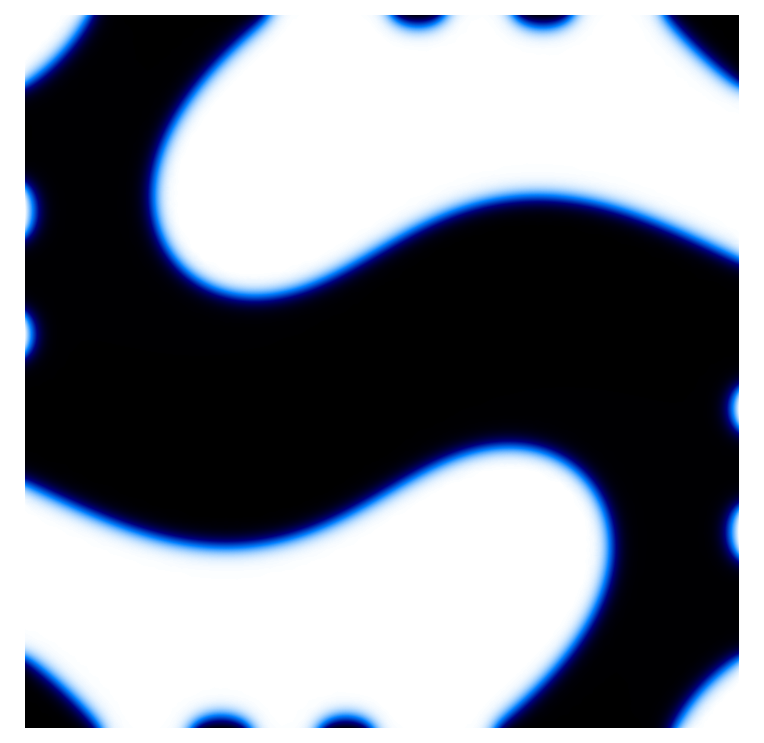}
\label{fig:scenario2:l}
}
\caption{Visualization of the phase separation process considered in Sec.~\ref{subsec:separation}.}
\label{fig:scenario2}
\end{figure}

\begin{figure}
\newcommand{\scale}{.23\textwidth}
\center
\subfloat[][$t=0.0044$]{
\includegraphics[width=\scale]{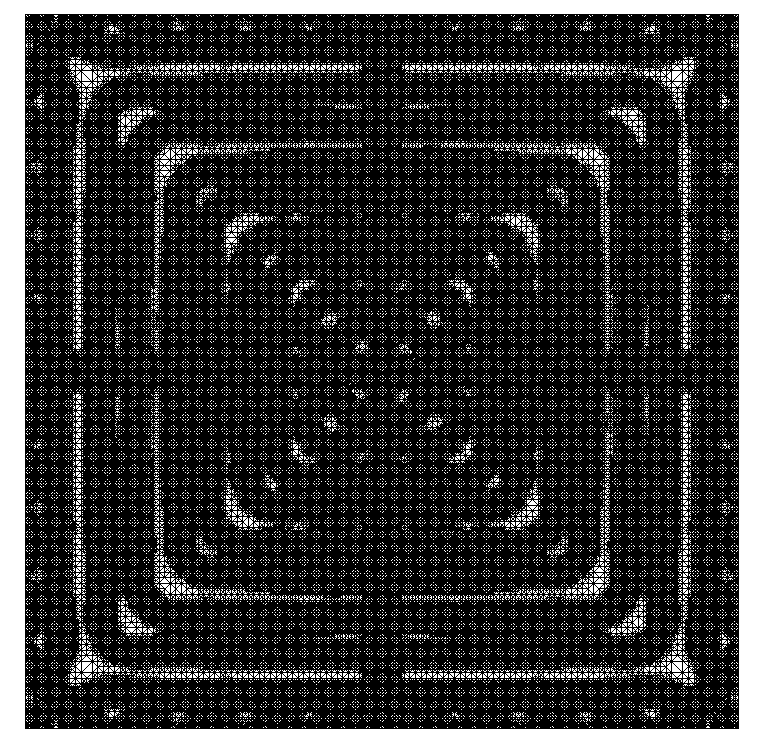}
\label{fig:wire:a}
}\hfill
\subfloat[][$t=0.02$]{
\includegraphics[width=\scale]{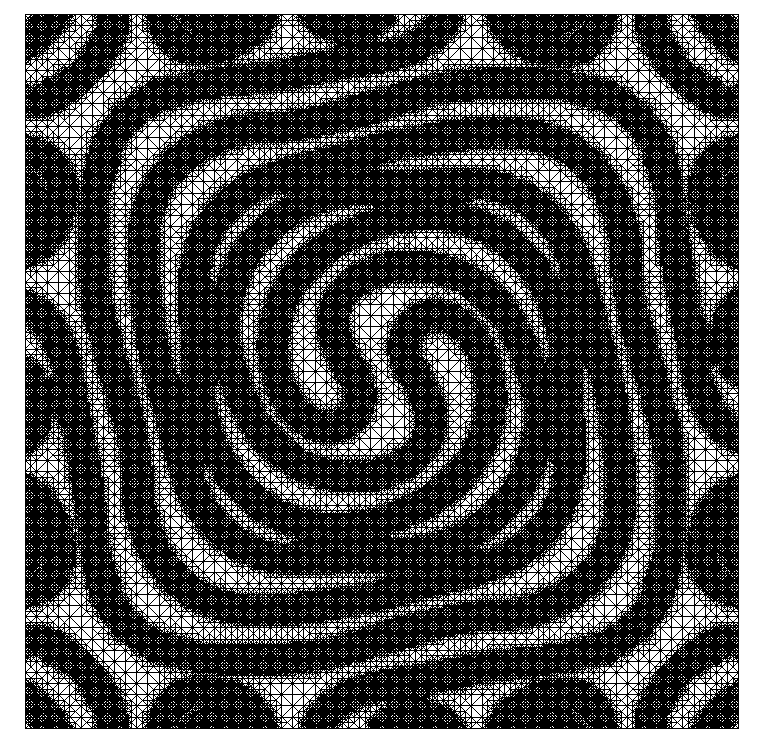}
\label{fig:wire:b}
}\hfill
\subfloat[][$t=0.35$]{
\includegraphics[width=\scale]{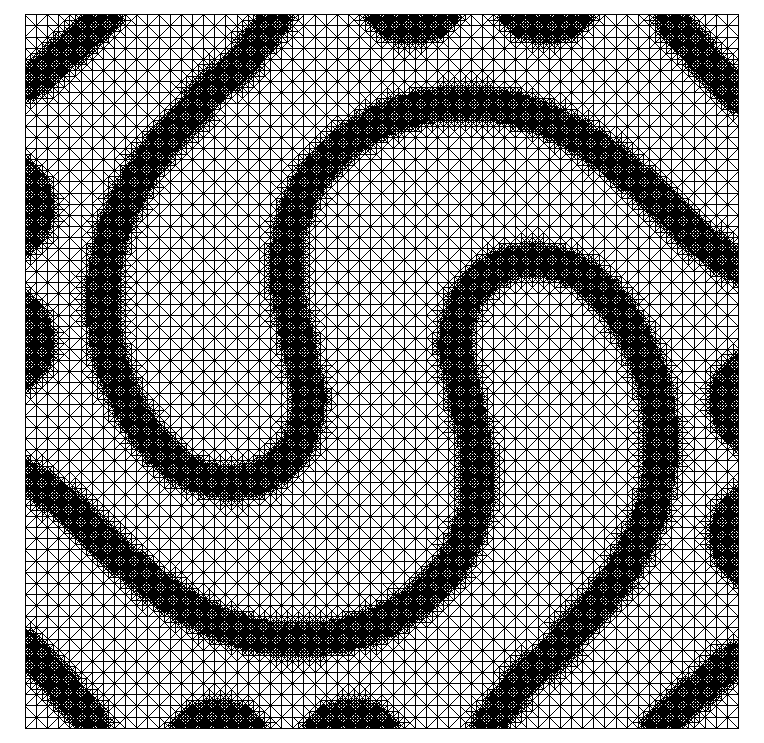}
\label{fig:wire:c}
}
\caption{Adaptive meshes used in Sec.~\ref{subsec:separation}.}
\label{fig:wire}
\end{figure}

\begin{figure}
\newcommand{\scale}{.3\textwidth}
\center
\hfill\subfloat[][Conservation of mass.]{
\includegraphics[width=\scale]{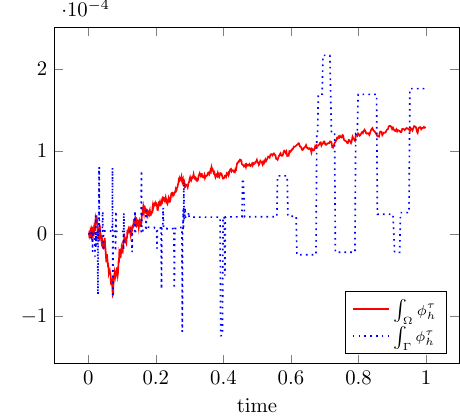}
\label{fig:mass2}
}\hfill
\subfloat[][Decrease of energy.]{
\includegraphics[width=\scale]{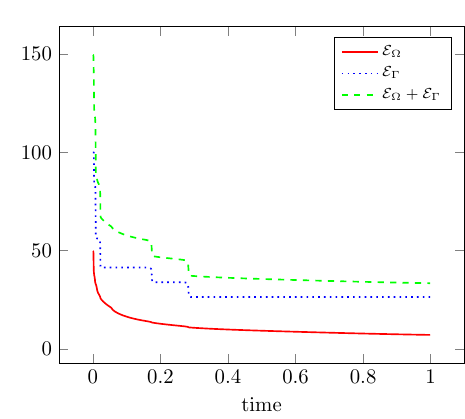}
\label{fig:energy2}
}\hfill~
\caption{Conservation of mass and decrease of total energy.}
\end{figure}

\begin{figure}\center
\newcommand{\scale}{.43\textwidth}
\resizebox{\scale}{!}{
\begin{tikzpicture}
  \begin{axis}[
  title=~,
    xmode=log, ymode=log,
    xlabel={time increment}, ylabel={condition number},
    legend entries={\eqref{eq:feform},\eqref{eq:feform} precond., \eqref{eq:scheme}},
    legend columns=1, legend cell align=left, legend style={font=\scriptsize},
    legend style= {at={(0.05,0.3)},anchor=south west},
    cycle list name={exotic},
    ]
    \addplot[color=red,mark=cube*]   coordinates {
   (0.0001000,30508581842.1604000)
(0.0000400,42668911133.3513000)
(0.0000200,60217171029.2185000)
(0.0000100,91014197140.6907000)
(0.0000010,272930564733.0940000)
(0.0000001,1900248851773.8900000)

    };
    \addplot[dotted,color=blue,mark=triangle*]   coordinates {
  (0.0001000,892992.5340732)
(0.0000400,2869416.0330299)
(0.0000200,7888878.6877471)
(0.0000100,23385916.6174885)
(0.0000010,633895092.8655570)
(0.0000001,8192892492.3760400)

    };
    \addplot[dashed, color=green,mark= diamond*]   coordinates {
    (0.0001000,799011.5533961)
(0.0000400,264855.7333485)
(0.0000200,106292.8528601)
(0.0000100,40387.9036102)
(0.0000010,1572.8376352)
(0.0000001,81.0995932)
    };
   
  \end{axis}
\end{tikzpicture}
}
\caption{Average condition numbers for different time increments.}
\label{fig:cond2}
\end{figure}

\begin{figure}
\center
\hfill\subfloat[][Dimensions of FE-spaces.]{
\includegraphics[width=.43\textwidth]{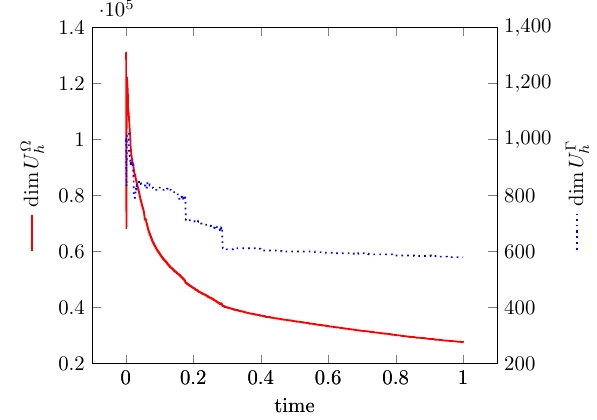}\label{fig:dim}
}\hfill
\subfloat[][Average cg-iterations.]{
\includegraphics[width=.33\textwidth]{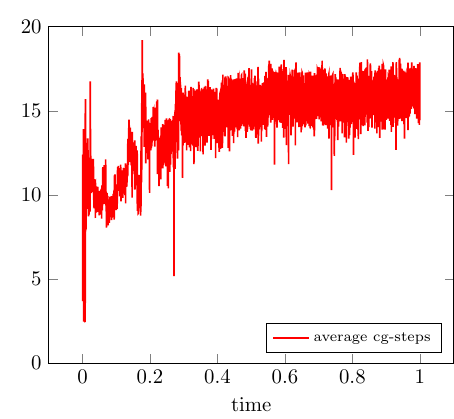}\label{fig:averagecg}
}\hfill~
\caption{Dimensions of FE-spaces and average cg-iterations.}
\end{figure}
\subsection{Scenario 2}\label{subsec:eoc}
In the second scenario, we are interested in the experimental order of convergence (EOC) for our scheme.
Similar to the last section, we choose $W_{\operatorname{pen}}$ (cf. \eqref{eq:penpot}) with penalty parameter $C_{\text{pen}}=250$ for $F$ and $G$ and use $\Omega:=\trkla{0,1}^2$.
The remaining parameters are collected in Tab.~\ref{tab:params1}.
We consider the initial configuration
\begin{align}
\phi\h^0\,:\,\R^2\supset\Omega\rightarrow \tekla{-1,1}&&\trkla{x_1,x_2}\mapsto \Ih{\max\tgkla{0.1\sin\trkla{\pi x_1},0.1\sin\trkla{\pi x_2}}}.
\end{align}
As this configuration is unstable, the two phases will separate and form areas where $\phi$ is close to $\pm1$.
For $h=\sqrt{2}\cdot2^{-7}$ and $\tau=2\cdot10^{-5}$, the evolution of $\phi$ in the time interval $\tekla{0,1}$ is depicted in Fig. \ref{fig:scenario1}.
Thereby, the light color represents  the phase $\phi=+1$ and the dark color represents the phase $\phi=-1$.
In order to visualize the initial condition, we rescaled Fig. \ref{fig:scenario1:initial} so that the dark color corresponds to $\phi=0$ and the light color to $\phi=0.1$.
Again, the separation process reduces the energy of the system (cf.~Fig.~\ref{fig:energy1}).\\
\begin{table}
\center
\begin{tabular}{c|c|c|c|c|c|c}
$\mobO$&$\delta$& $\sigma$& $\mobG$&$\deltaG$& $\kappa$ &$C_\text{pen}$\\
0.01&0.02&2&0.02&0.02&1&250
\end{tabular}
\caption{Parameters used in Section \ref{subsec:eoc}.}
\label{tab:params1}
\end{table}
\begin{figure}
\newcommand{\scale}{.23\textwidth}
\subfloat[][$t=0$ (rescaled)]{
\includegraphics[width=\scale]{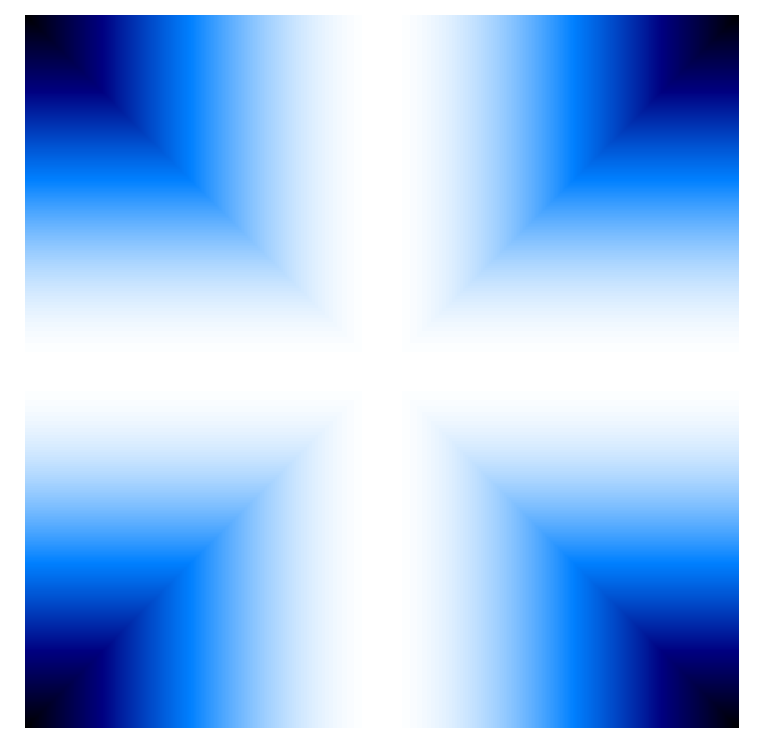}
\label{fig:scenario1:initial}
}\hfill
\subfloat[][$t=0.01$]{
\includegraphics[width=\scale]{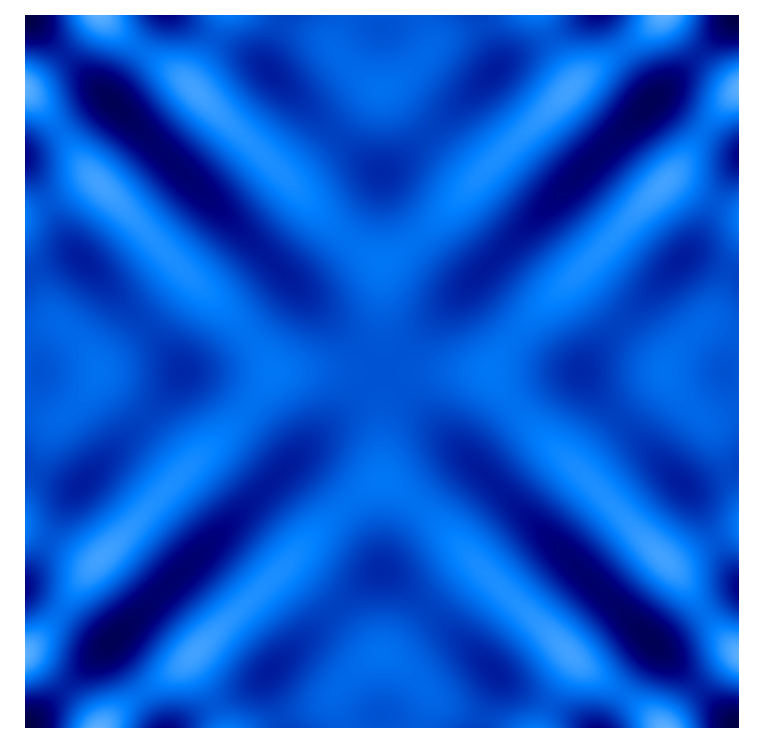}
}\hfill
\subfloat[][$t=0.02$]{
\includegraphics[width=\scale]{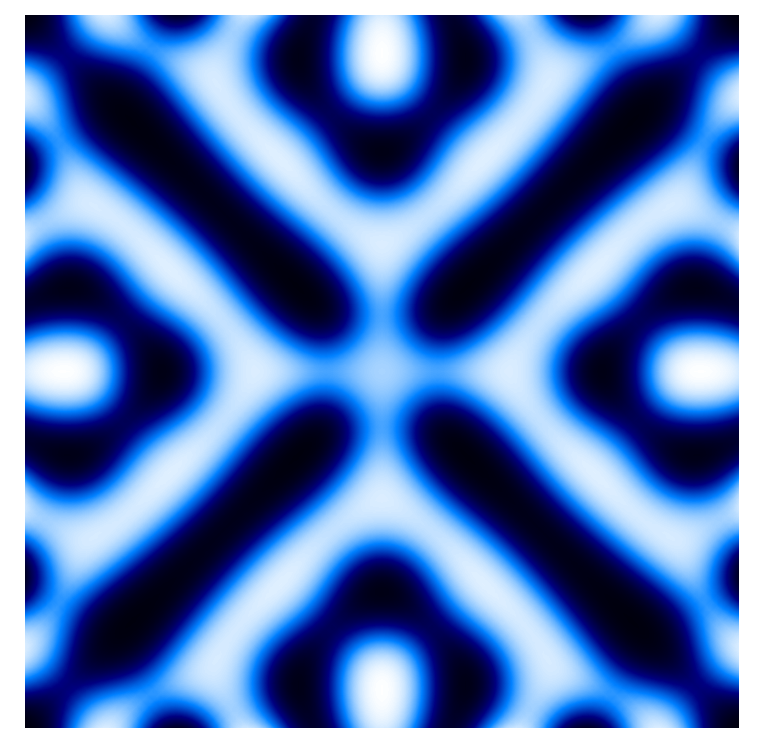}
}\hfill
\subfloat[][$t=0.04$]{
\includegraphics[width=\scale]{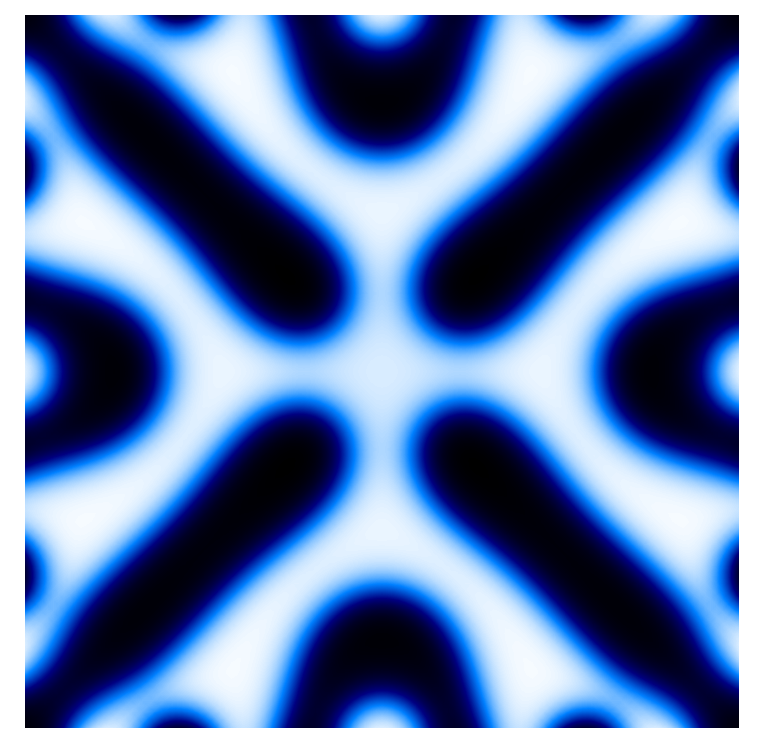}
}\\
\subfloat[][$t=0.1$]{
\includegraphics[width=\scale]{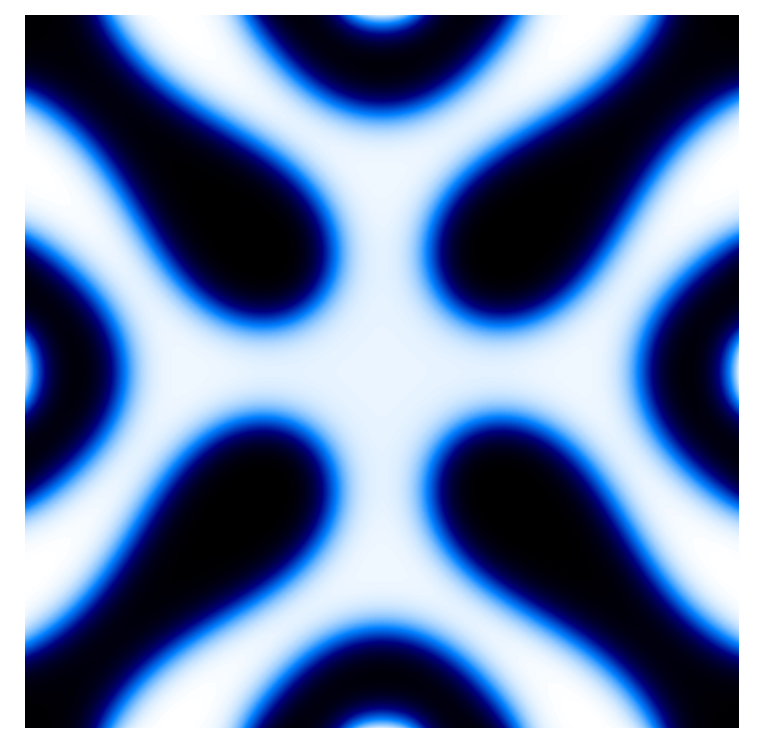}
}\hfill
\subfloat[][$t=0.2$]{
\includegraphics[width=\scale]{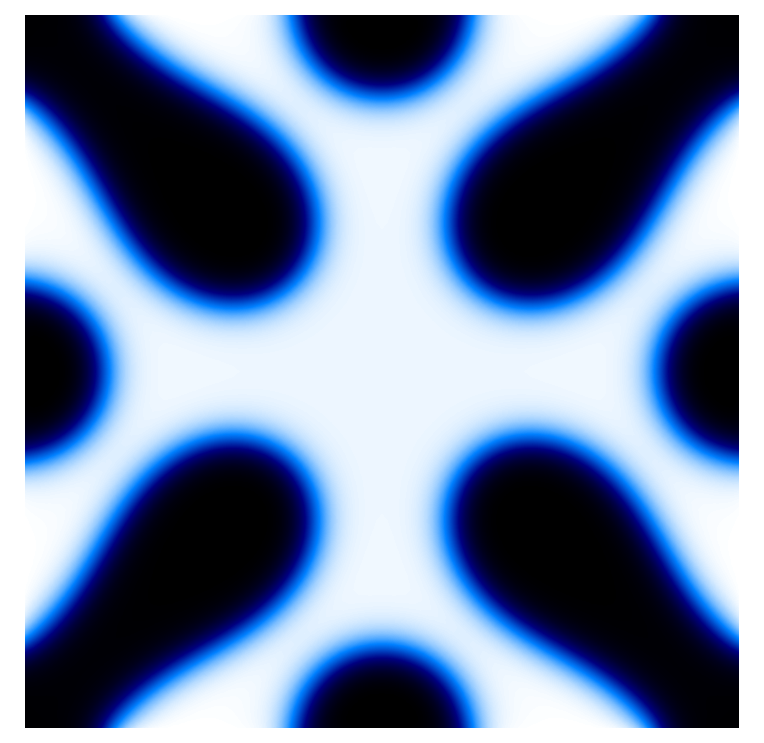}
}\hfill
\subfloat[][$t=0.5$]{
\includegraphics[width=\scale]{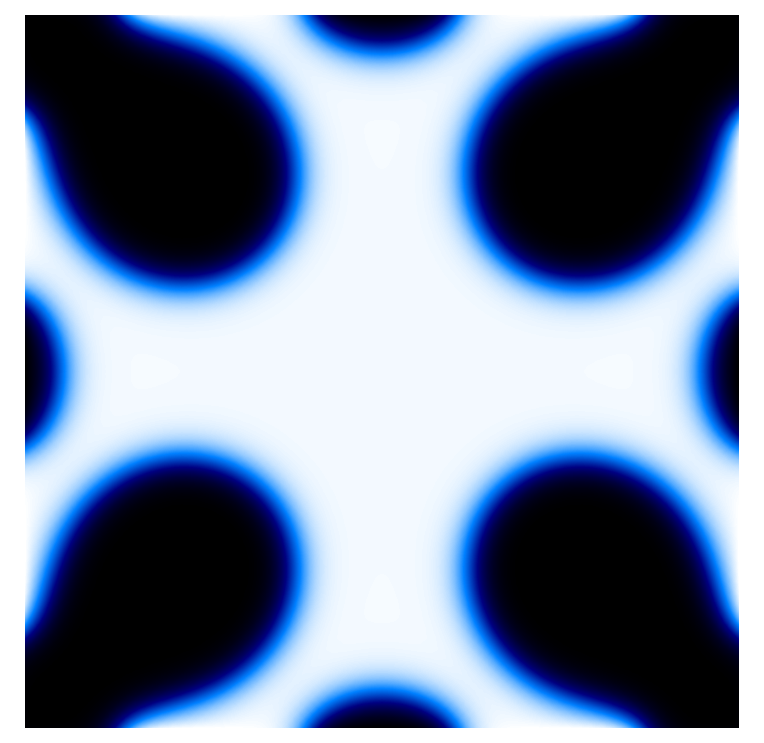}
}\hfill
\subfloat[][$t=1$]{
\includegraphics[width=\scale]{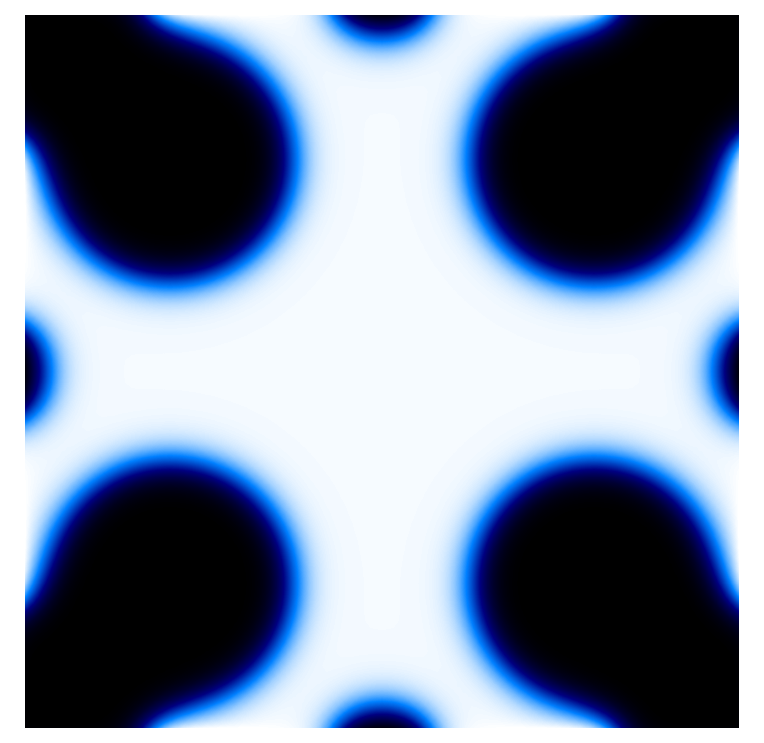}
}
\caption{Visualization of the phase-separation process investigated in Sec.~\ref{subsec:eoc}.}
\label{fig:scenario1}
\end{figure}

To compute an experimental order of convergence, we repeat the above simulation with $\tau= k\cdot 10^{-5}$, $k\in\tgkla{1,2,4}$, and $h=\sqrt{2}\cdot 2^{-l}$, $l\in\tgkla{6,7,8}$.
These spatial discretizations correspond to $\dim\Uhs\in\tgkla{4225,\, 16641,\,66049}$ and $\dim\UhG\in\tgkla{256,\, 512,\,1024}$.
Fixing $\tau=2\cdot10^{-5}$, we use the solution obtained for $h=\sqrt{2}\cdot2^{-8}$ as reference solution $\overline{\phi}$ and define
\begin{align}
\text{err}_h^\Omega:=\norm{\phi\h\tl-\overline{\phi}}_{L^2\trkla{0,1;L^2\trkla{\Omega}}}\,,
\end{align}
where the integration in time is approximated by a trapezoidal rule with step size $2\cdot10^{-4}$.
The experimental order of convergence w.r.t.~$h$ is then defined as
\begin{align}
\text{EOC}_h^\Omega\trkla{i}:={\log\rkla{\tfrac{\text{err}_{h_{i-1}}^\Omega}{\text{err}_{h_{i}}^\Omega}}}\scalebox{1.7}{/}{\log\rkla{\tfrac{h_{i-1}}{h_i}}}\,.
\end{align}
Analogously, we define $\text{err}_h^\Gamma$ and $EOC_h^\Gamma$ using the $L^2\trkla{0,1;L^2\trkla{\Gamma}}$-norm.
As shown in Tab.~\ref{tab:eoc:h}, we obtain order 2.3 for the convergence of $\phi\h\tl$ w.r.t.~$h$ on $\Omega$ and $1.1$ for the convergence of $\trace{\phi\h\tl}$ on $\Gamma$.

\begin{table}
\subfloat[][EOC w.r.t.~$h$ on $\Omega$.]{
\begin{tabular}{c|cc}
$h$ & $\text{err}_h^\Omega$ & $\text{EOC}_h^\Omega$\\\hline
$\sqrt{2}\cdot2^{-6}$ & $3.1\cdot10^{-2}$&-\\
$\sqrt{2}\cdot2^{-7}$ & $6.3\cdot10^{-3}$& 2.3
\end{tabular}}\hfill
\subfloat[][EOC w.r.t.~$h$ on $\Gamma$.]{
\begin{tabular}{c|cc}
$h$ & $\text{err}_h^\Gamma$ & $\text{EOC}_h^\Gamma$\\\hline
$\sqrt{2}\cdot2^{-6}$ & $1.8\cdot10^{-1}$&-\\
$\sqrt{2}\cdot2^{-7}$ & $8.3\cdot10^{-2}$& 1.1
\end{tabular}}
\caption{Experimental order of convergence w.r.t.~$h$.}
\label{tab:eoc:h}
\end{table}
In a similar manner, we fix $h=\sqrt{2}\cdot 2^{-7}$, use the solution corresponding to $\tau=1\cdot10^{-5}$ as reference solution, and define $\text{err}_\tau^\Omega$, $\text{err}_\tau^\Gamma$, $\text{EOC}_\tau^\Omega$, and $\text{EOC}_\tau^\Gamma$ accordingly.
As it can be seen in Tab.~\ref{tab:eoc:tau}, the computed convergence order w.r.t.~the time increment is 1.6 for $\phi\h\tl$ and its trace $\trace{\phi\h\tl}$.
\begin{table}
\subfloat[][EOC w.r.t.~$\tau$ on $\Omega$.]{
\begin{tabular}{c|cc}
$\tau$ & $\text{err}_\tau^\Omega$ & $\text{EOC}_\tau^\Omega$\\\hline
$4\cdot10^{-5}$ & $1.4\cdot10^{-2}$&-\\
$2\cdot10^{-5}$ & $4.5\cdot10^{-3}$& 1.6
\end{tabular}}\hfill
\subfloat[][EOC w.r.t.~$\tau$ on $\Gamma$.]{
\begin{tabular}{l|cc}
$\tau$ & $\text{err}_\tau^\Gamma$ & $\text{EOC}_\tau^\Gamma$\\\hline
$4\cdot10^{-5}$ & $7.3\cdot10^{-2}$&-\\
$2\cdot10^{-5}$ & $2.4\cdot10^{-2}$& 1.6
\end{tabular}}
\caption{Experimental order of convergence w.r.t.~$\tau$.}
\label{tab:eoc:tau}
\end{table}

Again, we conclude this section by evaluating the reliability and the efficiency of our scheme based on the conservation of mass, validity of \eqref{eq:feform:Gamma:phi}, the average condition number, and the average number of cg-iterations needed to solve the auxiliary problem.
As illustrated in Fig.~\ref{fig:mass1}, our scheme conserves $\iOmega\phi\h\tl$ and $\iGamma\phi\h\tl$ perfectly for all considered values of $h$ and $\tau$, if the triangulation is fixed.
As explained in the last section, we may use \eqref{eq:def:muG} to recover $\muGh\nn$ and compute the trace of $\phi\h\nn$ using \eqref{eq:feform:Gamma:phi}.
For $h=\sqrt{2}\cdot2^{-7}$ and $\tau=2\cdot10^{-5}$ the $L^2\trkla{\Gamma}$-norm of the deviation averages to $1.6\cdot10^{-9}$.\\ 
As it was done for the last scenario, we computed the matrices used in the first Newton iteration in the different schemes for various artificial time increments and estimated their condition number using the \textsc{Matlab} function \texttt{condest}.
The average condition numbers based on 51 equidistant points in time are plotted in Fig.~\ref{fig:cond1}.
As before, the matrices arising when solving \eqref{eq:feform} directly are ill-conditioned for small time increments.
A Jacobi preconditioner is able to reduce the condition number drastically, but can not overcome the structural problem of this approach.
Fig.~\ref{fig:cg1} shows the average number of cg-iterations needed to solve the auxiliary problem for $\tau=2\cdot10^{-5}$ and different values of $h$.
Again, the auxiliary problem can be solved with very few iterations. 
\begin{figure}
\newcommand{\scale}{.3\textwidth}
\center
\subfloat[][Decrease of energy.]{
\includegraphics[width=\scale]{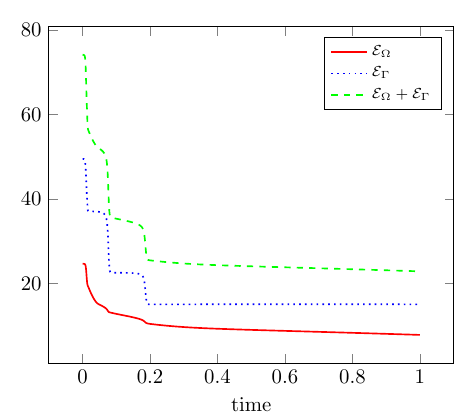}
\label{fig:energy1}
}\hfill
\subfloat[][Conservation of mass. ]{
\includegraphics[width=\scale]{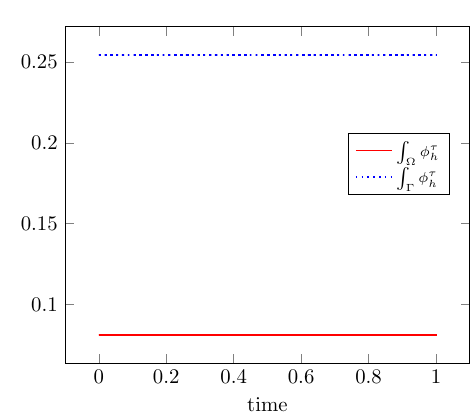}
\label{fig:mass1}
}\hfill
\subfloat[][Average cg-iterations.]{
\includegraphics[width=\scale]{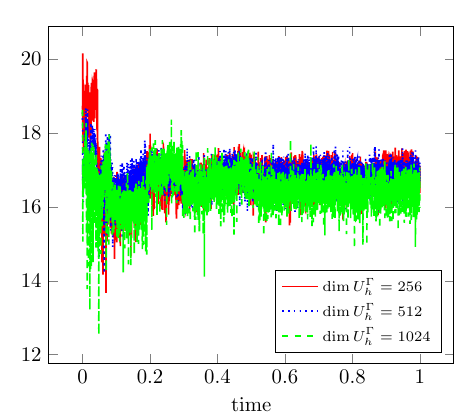}
\label{fig:cg1}
}
\caption{Energy, mass, and average cg-iterations in Scenario 2.}
\end{figure}

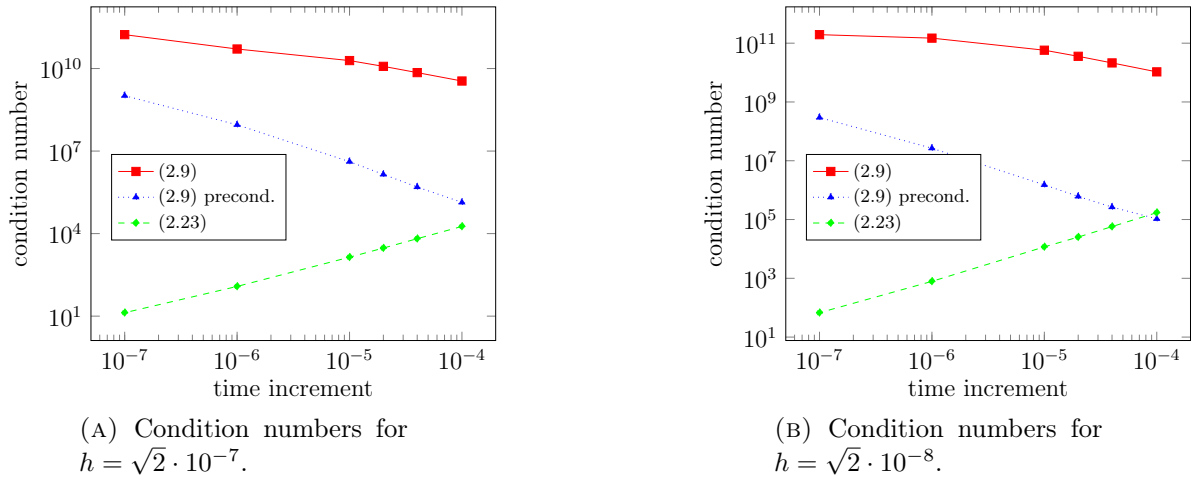
\begin{figure}
\newcommand{\scale}{.42\textwidth}
\center
\subfloat[][Condition numbers for $h=\sqrt{2}\cdot10^{-7}$.]{
\resizebox{\scale}{!}{
\begin{tikzpicture}
  \begin{axis}[
  title=~,
    xmode=log, ymode=log,
    xlabel={time increment}, ylabel={condition number},
    legend entries={\eqref{eq:feform},\eqref{eq:feform} precond., \eqref{eq:scheme}},
    legend columns=1, legend cell align=left, legend style={font=\scriptsize},
    legend style= {at={(0.05,0.3)},anchor=south west},
    cycle list name={exotic},
    ]
    \addplot[color=red,mark=cube*]   coordinates {
    (0.0001000,3518668328.8613400)
(0.0000400,7115961473.8082900)
(0.0000200,12008059726.9612000)
(0.0000100,19473502437.2515000)
(0.0000010,51271098841.5730000)
(0.0000001,170271303274.1200000)
    };
    \addplot[dotted,color=blue,mark=triangle*]   coordinates {
    (0.0001000,136522.3502043)
(0.0000400,493771.9310815)
(0.0000200,1417747.4761820)
(0.0000100,4118852.1038677)
(0.0000010,90065973.4875475)
(0.0000001,1028732059.7508800)

    };
    \addplot[dashed, color=green,mark= diamond*]   coordinates {
    (0.0001000,18246.6621711)
(0.0000400,6524.7075521)
(0.0000200,2981.3324872)
(0.0000100,1387.1965894)
(0.0000010,120.1754684)
(0.0000001,13.3060032)

    };
   
  \end{axis}
\end{tikzpicture}
}}
\hfill
\subfloat[][Condition numbers for $h=\sqrt{2}\cdot10^{-8}$.]{
\resizebox{\scale}{!}{
\begin{tikzpicture}
  \begin{axis}[
  title=~,
    xmode=log, ymode=log,
    xlabel={time increment}, ylabel={condition number},
    legend entries={\eqref{eq:feform},\eqref{eq:feform} precond., \eqref{eq:scheme}},
    legend columns=1, legend cell align=left, legend style={font=\scriptsize},
    legend style= {at={(0.05,0.3)},anchor=south west},
    cycle list name={exotic},
    ]
    \addplot[color=red,mark=cube*]
    coordinates {
    (0.0001000,10562723525.2308000)
(0.0000400,21296860264.8004000)
(0.0000200,35794004930.3980000)
(0.0000100,57701686209.2617000)
(0.0000010,148478309196.6580000)
(0.0000001,194829221264.5930000)
    };
    \addplot[dotted, color=blue,mark=triangle*] coordinates {
       (0.0001000,105528.4982702)
(0.0000400,265122.4787326)
(0.0000200,611449.7673469)
(0.0000100,1510936.4638895)
(0.0000010,26677162.5457058)
(0.0000001,295451855.2050900)
 
    };
    \addplot[dashed, color=green,mark= diamond*]   coordinates {
    (0.0001000,171634.4115705)
(0.0000400,57477.0181449)
(0.0000200,25262.3092157)
(0.0000100,11722.5037508)
(0.0000010,786.9799959)
(0.0000001,67.4665984)

    };
   
  \end{axis}
\end{tikzpicture}
}}
\caption{Average condition numbers for different time increments.}
\label{fig:cond1}
\end{figure}

\begin{appendix}
\section{Appendix}
For the reader's convenience, we provide the generalized Poincar\'e inequality which can be found in \cite{Alt2016}.
\begin{lemma}\label{lem:poincare}
Let $\Omega\subset\R^d$ be open, bounded and connected with Lipschitz boundary $\partial\Omega$.
Moreover, let $1<p<\infty$ and let $\mathcal{M}\subset W^{1,p}\trkla{\Omega}$ be nonempty, closed and convex.
Then the following items are equivalent for every $u_0\in\mathcal{M}$:
\begin{enumerate}
\item There exists a constant $C_0<\infty$ such that for all $\xi\in\R$
\begin{align*}
u_0+\xi\in\mathcal{M}&&\Longrightarrow&&\abs{\xi}\leq C_0\,.
\end{align*}
\item There exists a constant $C<\infty$ with
\begin{align*}
\norm{u}_{L^p\trkla{\Omega}}\leq C\trkla{1+\norm{\nabla u}_{L^p\trkla{\Omega}}}&&\text{for~all~} u\in\mathcal{M}\,.
\end{align*}
\end{enumerate}
\end{lemma}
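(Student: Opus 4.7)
The plan is to establish the equivalence by proving each implication separately, with the harder direction $(1)\Rightarrow(2)$ handled by a compactness--plus--convexity contradiction argument.

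For the easy direction $(2)\Rightarrow(1)$, I would simply apply the inequality in (2) to the element $u=u_0+\xi\in\mathcal{M}$. Since $\nabla u=\nabla u_0$, one obtains $\|u_0+\xi\|_{L^p(\Omega)}\leq C\trkla{1+\|\nabla u_0\|_{L^p(\Omega)}}$, and then the reverse triangle inequality together with $\|\xi\|_{L^p(\Omega)}=|\xi|\,|\Omega|^{1/p}$ yields a bound of the form $|\xi|\leq C_0$ depending only on $u_0$, $C$, $\Omega$, and $p$.

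For the main direction $(1)\Rightarrow(2)$, I would argue by contradiction: assume (2) fails, so there is a sequence $u_n\in\mathcal{M}$ with $\|u_n\|_{L^p(\Omega)}>n\trkla{1+\|\nabla u_n\|_{L^p(\Omega)}}$; in particular $\alpha_n:=\|u_n\|_{L^p(\Omega)}\to\infty$. Normalize via $v_n:=u_n/\alpha_n$, so that $\|v_n\|_{L^p(\Omega)}=1$ while $\|\nabla v_n\|_{L^p(\Omega)}<1/n\to 0$. By reflexivity of $W^{1,p}(\Omega)$ (using $1<p<\infty$) together with the Rellich--Kondrachov compact embedding $W^{1,p}(\Omega)\compemb L^p(\Omega)$ valid on Lipschitz domains, pass to a subsequence converging weakly in $W^{1,p}(\Omega)$ and strongly in $L^p(\Omega)$ to some $v$. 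Since $\nabla v_n\to 0$ strongly in $L^p$, the distributional identity $\int_\Omega v\,\partial_i\varphi=-\lim\int_\Omega\partial_iv_n\,\varphi=0$ for $\varphi\in C_c^\infty(\Omega)$ forces $\nabla v\equiv 0$; connectedness of $\Omega$ then makes $v\equiv c$ for a constant $c$, and the preserved norm gives $|c|=|\Omega|^{-1/p}\neq 0$.

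The decisive step is to use the convexity and closedness of $\mathcal{M}$ to manufacture elements of the form $u_0+\xi$ with arbitrarily large $|\xi|$, contradicting (1). For any fixed $t\geq 0$ set $\lambda_n:=t/\alpha_n$, which lies in $[0,1]$ for $n$ large. Convexity of $\mathcal{M}$ gives
\begin{equation*}
w_n:=\lambda_n u_n+(1-\lambda_n)u_0=tv_n+(1-\lambda_n)u_0\in\mathcal{M}.
\end{equation*}
The scaling is chosen so that $\lambda_n u_n=tv_n\to tc$ in $L^p$ while $\nabla(tv_n)\to 0$ and $(1-\lambda_n)u_0\to u_0$ in $W^{1,p}$; hence $w_n\to u_0+tc$ strongly in $W^{1,p}(\Omega)$, and closedness of $\mathcal{M}$ yields $u_0+tc\in\mathcal{M}$. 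Letting $t\to\infty$ produces constants $\xi=tc$ of arbitrarily large modulus with $u_0+\xi\in\mathcal{M}$, contradicting (1).

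The main obstacle is the convex-combination construction in the final paragraph: one must calibrate $\lambda_n$ so that $\lambda_n u_n$ has a nontrivial, nonzero limit while $(1-\lambda_n)u_0$ still returns to $u_0$, and only the choice $\lambda_n=t/\|u_n\|_{L^p}$ extracts precisely the constant limit $tc$ needed to generate the unbounded family of admissible shifts. A subsidiary technical point is justifying that the weak $W^{1,p}$-limit of $v_n$ is a constant, which relies on the connectedness hypothesis implicitly contained in "$\Omega$ is bounded and connected with Lipschitz boundary."
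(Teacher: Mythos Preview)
The paper does not actually prove this lemma: it is stated in the appendix purely ``for the reader's convenience'' with an explicit reference to \cite{Alt2016}, and no proof is given. So there is no paper-proof to compare against.

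Your argument is correct. The easy direction $(2)\Rightarrow(1)$ is exactly as you say. For $(1)\Rightarrow(2)$, the normalization $v_n=u_n/\alpha_n$ and the Rellich--Kondrachov step are standard, and the key move---forming the convex combination $w_n=\lambda_n u_n+(1-\lambda_n)u_0$ with the calibrated weight $\lambda_n=t/\alpha_n$ so that $\lambda_n u_n=tv_n\to tc$ while $(1-\lambda_n)u_0\to u_0$ in $W^{1,p}$---is precisely how one exploits convexity and closedness of $\mathcal{M}$ here. Since $\nabla v_n\to 0$ strongly, the convergence $w_n\to u_0+tc$ is indeed strong in $W^{1,p}$, so closedness applies and the contradiction with (1) follows. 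This is the standard compactness--contradiction proof one finds in the cited reference; nothing is missing.
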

\end{appendix}

\bibliographystyle{amsplain}

\end{document}